\documentclass[a4paper,10pt,twoside,reqno]{amsart}

\usepackage[left=2cm,right=2cm,top=2cm,bottom=2cm]{geometry}
\usepackage{verbatim}
\usepackage[utf8]{inputenc}
\usepackage[OT1]{fontenc}
\usepackage[english]{babel}
\usepackage{amsfonts}
\usepackage{amsmath}
\usepackage{amssymb}
\usepackage{amsthm}
\usepackage{amsaddr}
\usepackage{enumerate}
\usepackage{color}
\usepackage{esint}
\usepackage{hyperref}
\usepackage{cite}
\usepackage{mathrsfs}
\usepackage{bbm}
\usepackage{graphicx}
\usepackage{cancel}
\usepackage{tikz-cd}

\makeatletter
\renewcommand{\section}{\@startsection
  {section}{1}{\z@}%
  {-3.5ex \@plus -1ex \@minus -.2ex}
  {1.5ex \@plus .2ex}
  {\centering\normalfont\Large\scshape\bfseries}} 
\makeatother

\usepackage{todonotes}

\newcommand{\del}{\partial} 

\newcommand{\ee}{\varepsilon}

\newcommand{\EE}{\mathcal{E}}

\newcommand{\ZZ}{\mathbb{Z}}
\newcommand{\R}{\mathbb{R}}
\newcommand{\NN}{\mathbb{N}}

\newcommand{\no}[2]{ \left\| #1 \right\|_{#2} }

\newcommand{\Ccal}{\mathcal{C}}

\newcommand{\Rcal}{\mathcal{R}}
\renewcommand{\Re}{\operatorname{Re}}
\renewcommand{\Im}{\operatorname{Im}}
\newcommand{\im}{{i}}

\newcommand{\erf}{\, \mathrm{erf}}

\usepackage{amsthm}
\usepackage{aliascnt}
\usepackage{enumitem}

\usepackage{cleveref}
\usepackage{hyperref}

\newtheorem{theorem}{Theorem}[section]

\newaliascnt{lemma}{theorem}
\newtheorem{lemma}[lemma]{Lemma}
\aliascntresetthe{lemma}
\crefname{lemma}{Lemma}{Lemmas}
\Crefname{lemma}{Lemma}{Lemmas}

\newaliascnt{prop}{theorem}
\newtheorem{prop}[prop]{Proposition}
\aliascntresetthe{prop}
\crefname{prop}{Proposition}{Propositions}
\Crefname{prop}{Proposition}{Propositions}

\newaliascnt{cor}{theorem}

\aliascntresetthe{cor}
\crefname{cor}{Corollary}{Corollaries}
\Crefname{cor}{Corollary}{Corollaries}

\newaliascnt{definition}{theorem}
\newtheorem{definition}[definition]{Definition}
\aliascntresetthe{definition}
\crefname{definition}{Definition}{Definitions}
\Crefname{definition}{Definition}{Definitions}

\newaliascnt{remark}{theorem}
\newtheorem{remark}[remark]{Remark}
\aliascntresetthe{remark}
\crefname{remark}{Remark}{Remarks}
\Crefname{remark}{Remark}{Remarks}

\newaliascnt{hypotheses}{theorem}
\newtheorem{hypotheses}[hypotheses]{Hypotheses}
\aliascntresetthe{hypotheses}
\crefname{hypotheses}{Hypotheses}{Hypotheses}
\Crefname{hypotheses}{Hypotheses}{Hypotheses}

 \DeclareFontFamily{OT1}{rsfs}{}
 \DeclareFontShape{OT1}{rsfs}{m}{n}{ <-7> rsfs5 <7-10> rsfs7 <10-> rsfs10}{}
 \DeclareMathAlphabet{\mycal}{OT1}{rsfs}{m}{n}

\newcommand\smallO{
  \mathchoice
    {{\scriptstyle\mathcal{O}}}
    {{\scriptstyle\mathcal{O}}}
    {{\scriptscriptstyle\mathcal{O}}}
    {\scalebox{.7}{$\scriptscriptstyle\mathcal{O}$}}
  }

\begin{document}

\title[ ]{\textbf{Hadamard Ill-Posedness  of the linearised  Prandtl Equations in Gevrey spaces 
}}

\author[
    F. De Anna\quad
    J. Kortum
]{
    Francesco De Anna$^1$ \quad
    Joshua Kortum$^2$
}

 \address{
 $\,^{1,\,2}$ Julius-Maximilians-Universität Würzburg, Institute of Mathematics,
Chair for Mathematics in the Sciences, Emil-Fischer-Stra\ss e 40, 97074 Würzburg, Germany
 \\
 \medskip
 $\,^1$francesco.deanna@uni-wuerzburg.de\\
 $\,^2$joshua.kortum@uni-wuerzburg.de
}

\begin{abstract}
 
We prove Hadamard ill-posedness for the non-autonomous linearised Prandtl equations around time-dependent shear-flow equilibria in function spaces up to Gevrey class 4. More precisely, we construct compactly supported smooth initial data, Gevrey class $4$ in the tangential variable, for which the system admits no weak solution for any positive lifespan. In this regard, we improve previous results by showing that classical semigroup-type instabilities do not, by themselves, 
imply Hadamard ill-posedness in the non-autonomous case when the initial time is not a variable of the system. 

\smallskip 
\noindent 
Our argument is based on a family of exact unstable modes whose $L^2$-norms grow, at tangential  frequency $k$, like $\exp(c \sqrt k t)$ up to times of order $t \sim k^{-1/4}$. Their construction relies on an inner–outer gluing scheme, in the spirit of matched asymptotic expansions, which combines unstable inner solutions near a non-degenerate critical point with an exact outer solution and yields exponentially small matching errors for short times. 
\end{abstract}

\maketitle	

\noindent 
\textbf{AMS Subject Classification:} primary 35Q35; secondary 76D10, 35B30, 35B35, 35C20
\\ 
\noindent 
\textbf{Keywords:} linearised Prandtl equations; Hadamard ill-posedness; non-solvability; boundary layers 

\section{Introduction}
%
\noindent 
The Prandtl equations are a classical asymptotic model in fluid dynamics describing the leading-order dynamics of boundary layers in the vanishing viscosity limit of the Navier-Stokes equations. Among the various mathematical challenges of rigorously justifying their asymptotics \cite{MR3634071,MR3961300,MR4592099,MR4715239,MR1761409,MR3566199}, a central problem concerns the well-posedness of the Prandtl equations themselves.

\smallskip 
\noindent 
The well-posedness theory reflects, in part, the physics of boundary layer separation since it falls into two categories. Under a monotonicity assumption on the initial velocity, the equations are well-posed in Sobolev spaces  \cite{AWXY2015,MR3765768,oleinik1963prandtl,MW2015,LY2020}; without monotonicity, well-posedness requires high tangential regularity, typically analytic or so-called Gevrey classes \cite{MR4465902,MR3855356,MR3429469,lombardo2003well}, with Gevrey-2 appearing as a borderline case \cite{MR3925144,MR4773381}.  

\smallskip 
\noindent 
These regularity requirements are supported by the fact that the Prandtl equations, when linearised around suitable non-monotonic profiles, are ill-posed in Sobolev spaces  \cite{MR2952715,MR2849481}. To the best of our knowledge, this is the first ill-posedness result in any Gevrey class for the linearised Prandtl equations. In this work, we establish ill-posedness for the linearised equations in Gevrey-4 spaces and provide some clarifications concerning the Sobolev instabilities.

\smallskip 
\noindent 
We focus on the two-dimensional Prandtl model linearised around a time-dependent shear flow $(u,v) = (u_s,0)$. Here, the shear map $u_s= u_s (t,y)$ is a smooth solution of the heat equation
\begin{equation}\label{eq:heat-equation}
    \partial_t u_s  - \partial_y^2 u_s  = 0,\qquad \qquad 
    u_s |_{y = 0} = \lim\limits_{y \to + \infty} u_s = 0,\qquad \qquad
    u_s |_{t = 0} = U_s,
\end{equation}
with $t>0$, $y \in \mathbb R_+ = (0, \infty)$ and where  $U_s= U_s(y)$ is an appropriate  non-monotonic profile (cf.~\Cref{Hypotheses-US-us}). The shear flow $(u_s,0)$ is a smooth solution of the nonlinear Prandtl equations and the corresponding linearisation is non-autonomous, taking the form:
\begin{equation}\label{eq:lin-Prandtl}
   \begin{cases}
       \, \partial_t u  + u_s \, \partial_x u   + v \,\partial_y u_s 
       - \partial_y^2 u  = 0
       \qquad 
       &\!\!(t,x,y)\in(0,\delta) \times \mathbb T \times \mathbb R_+,
       \\
       \, \partial_x u + \partial_y v = 0
       \qquad 
       & 
       \phantom{(t,x,y) \in }(0,\delta) \times \mathbb T \times \mathbb R_+,
       \\
       (u,v)\big|_{y = 0} =(0,0),\quad \lim\limits_{y \to +  \infty} u = 0 
       \qquad 
       & 
       \phantom{(t,x,y) \in } (0,\delta) \times \mathbb T,
        \\
       \,u\,|_{t = 0} = u_{\rm in}
       \qquad 
       & 
       \phantom{\,(t,x,y) \in  (0,\delta) \times } \mathbb T \times \mathbb R_+,
   \end{cases} 
\end{equation}
for a velocity field $(u,v)$ and an arbitrary (small) lifespan $\delta >0$. 

\medskip  
\noindent 
When $u_s$ depends on time, \cite{MR2601044} established a seminal result regarding the ill-posedness of \eqref{eq:lin-Prandtl} in Sobolev spaces: in general, System \eqref{eq:lin-Prandtl} does not generate a uniformly continuous two-parameter semigroup acting in Sobolev regularities. More specifically, under sufficient regularity and decay of $u_s$, the equations are locally well-posed for $u_{\rm in}$ with Fourier modes in $x \in \mathbb T$ that are weighted Lebesgue/Sobolev in $y \in \mathbb R_+$ and whose norms decay at an analytic rate (cf.~Proposition~1 in \cite{MR2601044}). If one  denotes by $T(\tau,t)(u_{\rm in}) = u(t, \cdot)$ the underlying solution operator with $u|_{t = \tau} = u_{\rm in} $ imposed at a time $\tau \in [0, \delta)$, Theorem 1-$(ii)$ in \cite{MR2601044} shows that there exist non-monotonic profiles $u_s(t,y)$ for which $T$ does not extend uniformly as an operator in Sobolev regularities, since it satisfies:
\begin{equation}\label{eq:result-of-GV}
    \sup_{0 \leq \tau \leq t \leq \delta } 
    \big\| 
        \,e^{- \sigma(t-\tau) \sqrt{|\partial_x|}} T(\tau,t) \,
    \big\|_{\mathcal{L}(H^{m_1}, H^{m_2} )} = + \infty,
    \quad \forall m_1,\, m_2 \geq 0,\quad 
    \;  
    H^m = H^m (\mathbb T, L^\infty(\mathbb R_+, e^{\lambda y} dy )),
\end{equation}
for any $\sigma\in [0, \sigma_0)$, with given $\sigma_0>0$, and any $\delta>0$, $\lambda >0$. By this identity and the Banach–Steinhaus theorem, there exists a unit-norm initial datum $u_{\rm in}$ in $H^{m_1}$ such that $T(\tau_n,t_n)(u_{\rm in})$ diverges in $H^{m_2}$ for some sequence $(\tau_n, t_n)\to (0, 0)$ with $0\leq \tau_n \leq t_n$. In other words, continuity at $(\tau,t) = (0, 0)$ fails  in any Sobolev space, when the initial time  $\tau \in [0, \delta)$ is also considered a variable of the system.

\smallskip 
\noindent
Similar relations as in \eqref{eq:result-of-GV} were subsequently extended to three dimensions \cite{MR3458159}, MHD boundary layers \cite{MR3864769}, and to the case of lower decay of $u_s$ as $y \to \infty$ \cite{MR3670620}.

\smallskip 
\noindent 
There are, however, several examples of non-autonomous PDEs that define a global-in-time, two parameter semigroup $S = S(\tau,t)$ in Sobolev spaces, which is not uniformly continuous, yet satisfies the relations:
\begin{equation}\label{eq:intro-S-semigroup-relations}
    \sup_{0 \leq t < + \infty} 
    \big\| 
        \,S(0,t) \,
    \big\|_{\mathcal{L}(H^{m_1}, H^{m_1} )} \leq 1 ,\quad 
    \qquad 
     \sup_{0 \leq \tau \leq t \leq \delta } 
    \big\| 
        \,e^{- \sigma(t-\tau) \sqrt{|\partial_x|}} S(\tau,t) \,
    \big\|_{\mathcal{L}(H^{m_1}, H^{m_2} )} = + \infty, 
\end{equation}
for any $m_1,\, m_2\in \mathbb R$, $\sigma \in [0,1)$ and $\delta >0$. A simple toy-model is presented in \Cref{sec:toy-model}. In particular, if the initial time is not treated as a variable, for instance~$\tau = 0$ is fixed, these PDEs are well-posed in any Sobolev spaces in the sense of Hadamard. As a natural question: does System \eqref{eq:lin-Prandtl} fall into this category, or does it instead exhibit Hadamard ill-posedness even when the initial time is fixed? We determine that the latter holds.

\smallskip
\noindent
We emphasize that this issue is exclusive to the non-autonomous case of System~\eqref{eq:lin-Prandtl}. If $u_s(t,y) = U_s(y)$, indeed, Gérard-Varet and Nguyen~\cite{MR2952715} proved the existence of non-monotonic $U_s$ and smooth, fast decaying initial data $u_{\rm in}$ that do not generate weak solutions. However, as we discuss in \Cref{remark:GV-Nguyen}, their approach does not extend directly to the non-autonomous setting of~\eqref{eq:lin-Prandtl}.   



\subsection{Main Results}$\,$

\smallskip 
\noindent 
Our approach combines a refined construction of unstable quasi-eigenmodes with a superposition argument adapted to Gevrey regularity. 
Inspired by~\cite{MR2601044} and \cite{DeAnnaKortum2025}, we revisit a special class of unstable quasi-eigenmodes $u(t,x,y) = u_k(t,y)e^{i k x}$, at fixed tangential frequencies $k \in \mathbb N$. Our first main result (cf.~\Cref{thm:inflating-solutions}) establishes the existence of exact quasi-eigenmode solutions (as opposed to approximate ones as in \cite{MR2601044}) to  System \eqref{eq:lin-Prandtl} that exhibit growth of order $e^{c \sqrt{k}\,t}$ persisting at least up to times of order $t \sim k^{-\frac{1}{4}}$. By superposition,  our second result exploits their maximal growth $e^{c \sqrt[4]{k}}$ to construct smooth, compactly supported initial data which are Gevrey class $4$ in $x\in \mathbb T$, and for which no weak solution to \eqref{eq:lin-Prandtl} exists for any lifespan $\delta >0$ (cf.~\Cref{def:weak-solution-Prandtl} and \Cref{thm:non-existence-of-solutions}).

\smallskip 
\noindent 
Our results rely on structural assumptions on the initial profile $u_s|_{t =0} = U_s$. We first introduce  the weighted Sobolev spaces $H^m_\lambda(\mathbb R_+)$, with $\mathbb R_+ = (0, \infty)$, defined for $m \in \mathbb N$ and $\lambda >0$:
\begin{align*}
    H^{m}_\lambda(\mathbb{R}_+) := 
    H^m(\mathbb{R}_+,\, e^{\lambda y} dy ) =
    \Big\{ \, f \in H^m(\mathbb{R}_+, \mathbb R)\; \big|\; 
    f^{(j) } \in L^2_\lambda(\mathbb R_+) := L^2(\mathbb R_+, e^{\lambda y} dy),\; \forall j  = 0,\dots, m \Big\},
\end{align*}
endowed with its natural norm
\begin{equation*}
    \| \, f \, \|_{H^{m}_\lambda}
    := \sum_{j = 0}^m\| \,f^{(j)} \,\|_{L^2_\lambda}
    = \sum_{j = 0}^m \Bigg( \int_{\mathbb R_+} |f^{(j)}(y)|^2 \,e^{2\lambda y} dy \Bigg)^\frac{1}{2}.
\end{equation*}
The space $H^m_\lambda(\mathbb R_+, \mathbb C)$  for complex-valued functions is defined analogously.  
\begin{hypotheses}\label{Hypotheses-US-us} We assume 
the following conditions for the initial profile $u_s|_{t = 0} = U_s$:
\begin{itemize}
\item[(H1)] There exist $m \in \mathbb N\setminus\{1,\,2\}$ and $\lambda >0$ such that $U_s\in H^{m+1}_\lambda(\mathbb{R}_+)$.
\item[(H2)] $U_s(0) = U_s''(0) = \dots = U_s^{2 \lfloor \frac{m}{2} \rfloor }(0) =  0$.
\item[(H3)] There exist $a_0>0$ and $0< d < a_0$ such that $U_s$ admits the following quadratic representation:
\begin{equation*}
    U_s(y) = U_s(a_0) + \frac{U_s''(a_0)}{2} (y-a_0)^2,\qquad \text{for any}\quad y \in \left[ a_0 - d, a_0 + d\right],
\end{equation*}
for some given $ U_s(a_0) \in \mathbb R$ and $U_s''(a_0)<0$. 
\end{itemize}
\end{hypotheses}
\noindent 
Under these assumptions, the unique  solution $u_s=u_s(t,y)$ to  \eqref{eq:heat-equation} is smooth and satisfies (cf.~\Cref{lemma:Green-formula-in-Hnmu})
\begin{equation*}
    u_s \in 
    \mathcal{C}^\infty( \mathbb R_+ \times \mathbb R_+ ) \cap 
    \mathcal{C}([0, \infty[, H^{m+1}_\lambda(\mathbb R_+)) \cap  \mathcal{C}^1([0, \infty[, H^{m-1}_\lambda(\mathbb R_+)),
\end{equation*}
which in particular implies exponential decay in  $y$ for each $t \geq 0$. As a consequence,  projecting \eqref{eq:lin-Prandtl} onto Fourier modes yields, for each $k \in \mathbb{Z}$, the problem
\begin{equation}\label{eq:lin-Prandtl-projected-k}
   \begin{cases}
       \, \partial_t u_k  + i k \, \big(  u_s \, u_k   -  \partial_y u_s \int_0^y u_k  \big)
       - \partial_y^2 u_k  = 0
       \qquad 
       &\!\!(t,y)\in(0,\delta) \times \mathbb R_+,
       \\
       \,u_k |_{y = 0} = \lim\limits_{y \to +  \infty} u_k = 0 
       \qquad 
       & 
       \phantom{(t,y) \in } (0,\delta),
        \\
       \,u_k\,|_{t = 0} = u_{{\rm in},k}
       \qquad 
       & 
       \phantom{\,(t,y) \in  (0,\delta) \times }  \mathbb R_+,
   \end{cases} 
\end{equation}
For each fixed $k$, this system can be treated as a linear perturbation of the heat equation on the half-line with Dirichlet boundary conditions, and for any $u_{{\rm in},k} \in H^1_0(\mathbb R_+)$ it admits a unique solution
\begin{equation}\label{eq:fct-space-uk}
    u_k \in \mathcal{C}([0, \delta], H^1_0(\mathbb R_+, \mathbb C)) \cap L^2(0, \delta ; H^2(\mathbb R_+, \mathbb C))\cap W^{1,2}(0, \delta; L^2(\mathbb R_+, \mathbb C)),
\end{equation}
for any $\delta>0$ (cf.~the a-priori estimate \eqref{eq:control-of-int0y-deltauk}).
%
%
The key assumption for the existence of unstable $u_k$ is (H3) in \Cref{Hypotheses-US-us}. It ensures the existence of a non-degenerate critical point at $y = a_0>0$, i.e.~$U_s'(a_0) = 0$ and $U_s''(a_0)\neq 0$.  
We stress, however, that the quadratic structure  near $y = a_0$ is essential for our construction, and not merely the existence of a non-degenerate critical point (see \Cref{sec:novelties-some-remarks}).

\smallskip 
\noindent 
Let us introduce the explicit form of the unstable initial datum $u_{\rm in, k}  =\frac{d}{dy}\phi_{{\rm unst}, k}$ for System \eqref{eq:lin-Prandtl-projected-k}. Its construction relies on an entire function arising in \cite{DeAnnaKortum2025} in the analysis of the Prandtl equations around a quadratic shear flow.
\begin{definition}\label{def:uink-into}
Let  $\Upsilon:\mathbb C \to \mathbb C$ be defined by
\begin{equation*}
    \Upsilon(\zeta):= 
    \frac{1}{\sqrt{2\pi}}\,\zeta\, e^{-\frac{\zeta^2}{2}} +
    \frac{1}{2}  
    \big(1+\zeta^2\big) 
    \left(1 + \erf\left( \frac{\zeta}{\sqrt{2}} \right) \right).
\end{equation*}  
Moreover, let $H$ denote the Heaviside function and let $\chi \in \mathcal{C}^\infty_c(\mathbb R_+)$ be a cutoff function satisfying 
\begin{equation*}
    \chi(y) = 1 \quad \text{for all } y\in \Big[a_0-\frac{d}{4}, a_0+\frac{d}{4}\Big]
    \qquad \text{and}\qquad 
    \chi(y) = 0\quad \text{for all } y \in \mathbb R_+\setminus \Big[a_0-\frac{d}{2}, a_0+\frac{d}{2}\Big].
\end{equation*}
For any $k \in \mathbb N$, we define the unstable stream function $\phi_{{\rm unst}, k}:\mathbb R_+ \to \mathbb C$ as follows: 
\begin{equation}\label{eq:intro-def-phiinnk}
\begin{aligned}
        \phi_{{\rm unst}, k}(y):= 
        &\chi(y) \,
        \Upsilon
        \Bigg( (y-a_0) \, e^{-\frac{i\pi}{8}} \sqrt[4]{ k \frac{|U_s''(a_0)|}{2}} \,\Bigg)
        \sqrt{\frac{|U_s''(a_0)|}{2k}}
        \, e^{i\frac{5\pi}{4}}
        \,+
        \\
        +&
        \Big(1-\chi(y) \Big)\, H(y-a_0) \bigg(\, U_s(y) -U_s(a_0) + e^{i\frac{5\pi}{4}}\sqrt{\frac{|U_s''(a_0)|}{2k}} \; \bigg).
\end{aligned}  
\end{equation}
\end{definition}
\noindent 
We defer further discussion of $\phi_{{\rm unst}, k}$ to the next section. For now, we note that it matches two profiles via the cutoff $\chi$: an inner layer based on $\Upsilon$ near $y = a_0$, and an outer layer at $y > a_0 + d/2$, determined by $U_s$. Moreover, $\Upsilon$ can be associated with the so-called shear-layer profile introduced in \cite{MR2601044} (cf.~Appendix \ref{sec:comparison-with-shear-layer}).

\smallskip 
\noindent 
The inner layer is the main source of instability (cf.~\Cref{lemma:quadratic-unstable-quasi-eigenmode}), and the following theorem shows that the outer layer and the matching scheme do not suppress the resulting growth, at least for short times $t \lesssim k^{-\frac{1}{4}}$.

\begin{theorem}\label{thm:inflating-solutions}
Let $u_s= u_s(t,y)$ be solution of the heat equation \eqref{eq:heat-equation} with $u_s|_{t= 0} = U_s$ satisfying \Cref{Hypotheses-US-us}.

\smallskip 
\noindent 
Consider the initial datum $u_{{\rm in}, k}= \frac{d}{dy} \phi_{{\rm unst}, k}: \mathbb R_+ \to \mathbb C$ defined in \Cref{def:uink-into}. Then the following statements hold true:
\begin{itemize}
    \item[(i)] One has $u_{{\rm in}, k} \in H^{m}_\lambda(\mathbb R_+, \mathbb C)$ and  $u_{{\rm in}, k}(y) = 0$ for any $y\in [0, a_0-d/2]$. Moreover, there exists a constant $\mathcal{D}_m = \mathcal{D}_m(U_s, d, \chi)>0$, depending only on $m$, $U_s$, $d$ and $\chi$, such that
\begin{equation*}
    \| \, u_{{\rm in}, k} \,\|_{H^{m}_\lambda }
    \leq \mathcal{D}_m e^{2\lambda a_0} \,  k^{\frac{2m-1}{4} },
    \qquad \forall \;k \in \mathbb N.
\end{equation*} 
\item[(ii)]  Let $u_k$  denote the unique classical solution of \eqref{eq:lin-Prandtl-projected-k} in the space \eqref{eq:fct-space-uk}. 
Then there exists $K \in \mathbb N$, depending only on $U_s$, $d$ and $\lambda$, such that for all $k \geq K$ and  all 
\begin{align*}
     t \in \left[ 0,\, 
       \frac{1}{\sqrt[4]{k}}
       \frac{d}{16(1 +\| U_s'' \|_{L^\infty} )}
       \right],
\end{align*}
one has the lower bound
\begin{align*}
        \big\| \, u_k(t , \cdot )\, \big\|_{L^2(\mathbb R_+)}
        \geq  
        \frac{1}{4} 
        \bigg( 
        \int_{a_0+\frac{d}{2}}^\infty | \, U_s'(y)\,|^2 dy \,\bigg)^\frac{1}{2}
        \exp \left( \frac{t}{2}  \sqrt{k \frac{|U_s''(a_0)|}{2} }\right).
    \end{align*}
\end{itemize}
\end{theorem}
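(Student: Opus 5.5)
The plan has two parts: a direct computation on the explicit profile for (i), and for (ii) an exact unstable solution at the level of the stream function plus a stability estimate for the correction.

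\smallskip
\noindent
\emph{Part (i).} Differentiating \eqref{eq:intro-def-phiinnk}, the term $(1-\chi)H(y-a_0)(\cdots)$ has derivative $-\chi'(\cdots)+(1-\chi)U_s'$ for $y>a_0$. The Heaviside jump is harmless, since $1-\chi=0$ near $a_0$. Vanishing on $[0,a_0-d/2]$ follows because $\chi=0$ there and $H(y-a_0)=0$. For the inner part, the $j$-th derivative of $\Upsilon\big((y-a_0)e^{-i\pi/8}c k^{1/4}\big)k^{-1/2}$ is bounded by $k^{(j-2)/4}$ times a bound on $\Upsilon^{(j)}$ along the rotated ray on the compact support of $\chi$. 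On the ray $\arg\zeta=-\pi/8$, the function $\Upsilon$ grows at most quadratically for $\Re\zeta\to+\infty$ and decays (the erfc asymptotics cancel the polynomial) for $\Re\zeta\to-\infty$. Hence the factor $\chi$ localises everything, and the top order $j=m+1$ for $\phi$, i.e.\ $j=m$ for $u_{\rm in}$, with $k^{(m+1-2)/4}\cdot k^{1/4}$ scaling from $L^2$, gives $k^{(2m-1)/4}$. The weight $e^{\lambda y}$ contributes at most $e^{2\lambda a_0}$ on the support of $\chi$, while the outer tail is controlled by $\|U_s\|_{H^{m+1}_\lambda}$. This yields (i).

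\smallskip
\noindent
\emph{Part (ii): stream-function formulation and exact modes.} I would work with $\phi_k=\int_0^y u_k$, which satisfies
\[
\partial_t\phi_k+ik(u_s\partial_y\phi_k-\partial_yu_s\,\phi_k)-\partial_y^3\phi_k=0 .
\]
The key structural fact is that, for the autonomous quadratic profile in (H3), the inner function $\Upsilon$ generates an exact unstable mode of the inviscid--viscous equation with growth rate $\sqrt{k|U_s''(a_0)|/2}$ (\Cref{lemma:quadratic-unstable-quasi-eigenmode}). For the outer part, $\phi=U_s-U_s(a_0)+c$ is annihilated by the transport operator up to the constant $c$. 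Along the heat flow, $u_s(t,\cdot)-u_s(t,a_0)$ combined with the moving constant gives an exact solution of the inviscid part, and the viscous term $\partial_y^3$ is compensated by $\partial_tu_s=\partial_y^2u_s$. Precisely, $\Phi_{\rm out}=u_s(t,y)-u_s(t,a_0(t))+c(t)$ solves the full equation exactly when $c$ grows at the unstable rate. This is the ``exact outer solution''.

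\smallskip
\noindent
\emph{Part (ii): gluing and the error estimate.} Next I glue: $\Phi_{\rm app}=\chi\Phi_{\rm in}+(1-\chi)H\Phi_{\rm out}$, where $\Phi_{\rm in}$ is the inner mode, now time dependent. Because $u_s$ stays exactly quadratic near $a_0$ only at $t=0$, I would instead use that the heat flow preserves the quadratic structure up to errors that are exponentially small on $[a_0-d/2,a_0+d/2]$ for $t\lesssim1$, transported inward at speed $\sim\|U_s''\|_\infty$. Alternatively, I would track the inner mode with time-dependent parameters. The gluing error is supported where $\chi'\neq0$, that is at $|y-a_0|\in[d/4,d/2]$. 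There $|\zeta|\sim k^{1/4}d$, and the inner and outer profiles agree up to $O(e^{-ck^{1/2}d^2})$, by the erf asymptotics. This smallness persists as long as the effective inner window has not reached the cutoff region, which is the origin of the restriction $t\le d\,k^{-1/4}/(16(1+\|U_s''\|_\infty))$. Writing $\phi_k=\Phi_{\rm app}+r$, the remainder solves the same equation with forcing $F$ of size $e^{-ck^{1/2}}$ and zero data. An energy estimate for $r$ in $L^2$, with the crude bound $e^{Ckt}$ from the transport terms, gives $\|\partial_yr\|_{L^2}\lesssim e^{Ck^{3/4}}e^{-ck^{1/2}}$. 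That bound is \emph{not} small, so instead I would use the a priori estimate \eqref{eq:control-of-int0y-deltauk} together with $kt\le k^{3/4}$, and arrange the matching error to be $e^{-ck^{3/4}}$-type, or run the energy estimate on $\partial_y r$ with $k$-dependent growth $e^{C\sqrt{k}t}$ using the Orr-Sommerfeld-type structure.

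\smallskip
\noindent
\emph{Conclusion and main obstacle.} Finally, $\|u_k(t)\|_{L^2}\ge\|\partial_y\Phi_{\rm app}\|_{L^2(y>a_0+d/2)}-\|\partial_yr\|_{L^2}$. The outer part equals $|\partial_yu_s(t)|$ times growth, and it is at least $\tfrac12\|U_s'\|_{L^2(a_0+d/2,\infty)}e^{t\sqrt{k|U_s''|/2}/2}$ for short times, since $u_s(t)\to U_s$. After absorbing the remainder, this gives the constant $1/4$. The main obstacle is this remainder estimate: showing that the matching error beats the a priori growth rate of the non-autonomous operator on the time window $t\sim k^{-1/4}$.
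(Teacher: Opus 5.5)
Part (i) is fine and is essentially the paper's computation: Leibniz rule on the explicit profile, plus polynomial bounds on $\Upsilon^{(n)}$ on the support of $\chi$. Your exponent bookkeeping is loose, but you only need an upper bound.

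The genuine gap is in Part (ii), exactly at the step you yourself flag as the main obstacle. You need an $L^2$ bound for the non-autonomous evolution with growth rate $\sqrt{k}$, namely $\|\delta u_k(t)\|_{L^2}\lesssim \mathrm{poly}(k)\int_0^t e^{C\sqrt{k}(t-\tau)}\|F(\tau)\|_{L^2}\,d\tau$. A direct energy estimate only gives rate $Ck$, because of the nonlocal term $ik\,\partial_y u_s\int_0^y\delta u_k$. With rate $k$ the argument cannot close: the errors are then controlled only for $t\lesssim k^{-1/2}$, where $e^{c\sqrt{k}\,t}=O(1)$ and no growth is visible.

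Neither of your fallbacks rescues this. The erf/Heaviside mismatch is Gaussian in $k^{1/4}(y-a(t))$ evaluated at $|y-a(t)|\sim d$, so it is of order $e^{-c\sqrt{k}d^2}$ and cannot be made $e^{-ck^{3/4}}$. ``An energy estimate with growth $e^{C\sqrt{k}t}$ using the Orr--Sommerfeld structure'' names the estimate you need but gives no mechanism for it.

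The paper's mechanism (\Cref{prop:correction}, adapting an ansatz of Dietert and G\'erard-Varet) runs as follows:
\begin{itemize}
\item[(a)] Introduce $\psi_k$ solving $(\partial_t+iku_s-\partial_y^2)\psi_k+ik\,\partial_yu_s\int_0^y\psi_k=\delta u_k$, with zero data.
\item[(b)] Set $\Psi_k=\delta u_k-ik\,\partial_yu_s\int_0^y\psi_k$.
\item[(c)] Because $\partial_y u_s$ solves the heat equation, the nonlocal terms cancel. Then $X=(\Psi_k,\sqrt{k}\,\psi_k)$ satisfies the local system $(\partial_t+iku_s-\partial_y^2)X=\sqrt{k}\,\mathcal{M}X+(F,0)$, where the quadratic form of $\mathcal{M}$ is bounded by $1+2\|U_s''\|_{L^\infty}$.
\item[(d)] The skew term $iku_s$ drops out of the energy identity, which yields rate $\sqrt{k}\,(1+2\|U_s''\|_{L^\infty})$. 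Finally, $\delta u_k$ is recovered from $X$ at the cost of only a factor $k$.
\end{itemize}

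Second, your error bookkeeping is incomplete, and this is why you misattribute the time window. Since $u_s(t,\cdot)$ is not quadratic for $t>0$, the forcing contains more than the $O(k^{3/4}e^{-c\sqrt{k}})$ erf/Heaviside terms. It also contains terms of size $k\,e^{-d^2/(16t)}/\sqrt{t}$, coming from:
\begin{itemize}
\item[(a)] $ik\,(u_s-\alpha-\beta(y-a)^2)$ and its $y$-derivatives inside the support of $\chi$;
\item[(b)] the gluing zone, where the outer layer carries $u_s$ while the inner layer asymptotically reproduces only its Taylor polynomial at $a(t)$;
\item[(c)] the terms involving $a'(t)$ and $\gamma'(t)$.
\end{itemize}
These errors are exponentially small only as $t\to0^+$, not uniformly for $t\lesssim1$; at $t\sim k^{-1/4}$ they are merely $e^{-ck^{1/4}}$.

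The window $t\lesssim d\,k^{-1/4}/(1+\|U_s''\|_{L^\infty})$ is precisely the condition $C\sqrt{k}\,t<d^2/(16t)$ (\Cref{lemma:AuxiliaryEstimateMainTheorem}). It balances this $e^{-d^2/(16t)}$ error against the $e^{C\sqrt{k}t}$ growth of the correction operator. It has nothing to do with an inner window reaching the cutoff. The heat flow transports nothing at speed $\|U_s''\|_{L^\infty}$; here $a'(t)$ is itself $O(e^{-d^2/(16t)}/\sqrt{t})$.

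Finally, the stream-function equation should read $(\partial_t+iku_s)\partial_y\phi_k-ik\,\partial_yu_s\,\phi_k-\partial_y^3\phi_k=0$. With $\partial_t\phi_k$ in place of $\partial_t\partial_y\phi_k$, your outer ansatz would not be an exact solution.
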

\noindent 
Part $(i)$ shows that $u_{{\rm in}, k}$ inherits the regularity of $U_s' \in H^m_\lambda(\mathbb R_+)$ and satisfies the boundary conditions. 
Moreover, its norm grows at most polynomially in $k\in \mathbb N$. An exact form of $\mathcal{D}_m>0$ given in \Cref{prop:estimates-remainders-and-uf}. 


\smallskip 
\noindent 
Part $(ii)$ presents the key aspect of the theorem. The lower bound already holds in the unweighted space $L^2(\mathbb{R}_+)$-norm (not weighted) and all constants are explicit. 

\smallskip 
\noindent 
Since $U_s$ is quadratic in $[a_0-d, a_0+d]$, its derivative $U_s'$ does not vanish identically in $(a_0 + d/2, +\infty)$ and the lower bound is nontrivial.
We also observe that the exponential growth is proportional to $|\,U_s''(a_0)\,|^{1/2}$ and degenerates when $U_s''(a_0) = 0$,  highlighting the role of the non-degeneracy as in \cite{MR2601044}. 

\smallskip 
\noindent
In addition to this, Part $(ii)$ highlights the dependence of the maximal time of norm-inflation with respect to $d > 0$. In our construction, the lifespan over which instability persists, shrinks as the interval in (H3) of  \Cref{Hypotheses-US-us}  collapses to the critical point $y = a_0$. As a consequence, our result cannot be extended to more general non-monotonic profiles by a simple approximation argument. Moreover, we infer  that $K=K(U_s, d, \lambda)\in \mathbb N$ diverges as $d \to 0+$.

\smallskip 
\noindent 
Building upon the quasi-eigenmodes constructed in \Cref{thm:inflating-solutions}, we obtain, by superposition, a smooth initial datum for the system \eqref{eq:lin-Prandtl}, for which no weak solution exists in the following sense:
\begin{definition}\label{def:weak-solution-Prandtl}
    Let $u_{\rm in} \in L^2(\mathbb T\times \mathbb R_+)$. A weak solution to \eqref{eq:lin-Prandtl} is a function
    \begin{equation*}
        u \in L^\infty(0, \delta; L^2(\mathbb T \times \mathbb R_+)) \cap  
        L^2(0,\delta; H^{0,1}_0(\mathbb T \times \mathbb R_+)),\quad 
        \text{with}\quad 
        H^{0,1}_0(\mathbb T \times \mathbb R_+) = 
        \overline{\mathcal{C}_c^\infty( \mathbb T \times \mathbb R_+)}^{\| \cdot \|_{H^{0,1}}},
    \end{equation*}
    and $\| f \|_{H^{0,1}} =\|  f \|_{L^2}  +  \| \partial_y f \|_{L^2}$, 
    such that for all test functions $\varphi \in \mathcal{C}^\infty_c([0, \delta) \times \mathbb T \times \mathbb R_+)$, one has
    \begin{align*}
        \int_0^\delta \int_{\mathbb T} \int_{\mathbb R_+} 
        &\bigg( 
            u \, \partial_t \varphi  + u_s  \, u \,  \partial_x \varphi  - \partial_y u  \, \partial_y \varphi  
        \bigg) (t,x,y) 
        \, dy dx dt
        \, +
        \\
        &-
        \int_0^\delta \int_{\mathbb T} \int_{\mathbb R_+} 
         u_s(t,y)  \bigg( \int_0^y u 
        (t,x,z)        
        dz \bigg) 
        \partial_x \varphi (t,x,y)
        dy dx dt        
        =
        -
        \int_{\mathbb T} \int_{\mathbb R_+} u_{\rm in}(x,y) \varphi(0, x,y)dy dx.
    \end{align*}
\end{definition}
\noindent 
This corresponds to a distributional solution with natural energy regularity for the nonlinear Prandtl equations.
We now construct an initial datum that does not admit any weak solution, under the further assumption that $U_s \in \mathcal{C}^\infty_c (\mathbb{R}_+)$. In this case, conditions (H1) and (H2) in \Cref{Hypotheses-US-us} are satisfied for all $m \in \mathbb{N}\setminus\{1, 2\}$ and all $\lambda > 0$, and the ill-posedness holds true regardless of their values. 

\smallskip
\noindent 
Consider the  Fourier series
\begin{equation}\label{eq:intro-Uincomplex-series}
        \mathbf{U}_{\rm in}(x,y) := \sum_{k = 1}^\infty e^{-\sigma_0 \sqrt[4]{k}} \frac{d}{dy} \phi_{{\rm unst,k}} (y) e^{ \im k x}  \in \mathbb C,\quad (x,y) \in \mathbb T \times \mathbb R_+.    
\end{equation}
where $\sigma_0>0$ is  sufficiently small constant. 
By Part $(i)$ of \Cref{thm:inflating-solutions}, 
the series converges 
in $\mathcal{D}(\mathbb T \times \mathbb R_+, \mathbb C)$. Moreover, $\mathbf{U}_{\rm in}$ is Gevrey-class $4$ in $x\in \mathbb T$ and Sobolev in $y\in \mathbb R_+$ in the following sense: a function $f : \mathbb T \times  \mathbb R_+  \to \mathbb C$  belongs to $\mathcal{G}_\sigma^4(\mathbb T, H^m_\lambda(\mathbb R, \mathbb C))$ with $\sigma>0$ and $m\in \mathbb N_0$, if there exists a sequence $(f_k)_{k\in \mathbb Z} \subset H^m_\lambda(\mathbb R_+, \mathbb C)$ such that
\begin{equation*}
    \| \,f \,\|_{\mathcal{G}^4_\sigma H^m_\lambda }:= 
    \sup_{k \in \mathbb Z} \Big( e^{\sigma \sqrt[4]{k}} \| f_k \|_{H^m_\lambda} \Big)  < + \infty,\quad \text{and}\quad 
    f(x,y) = \sum_{k \in \mathbb Z} f_k(y)e^{i k x}
    \quad\text{for a.e.}\quad (x,y) \in \mathbb T \times \mathbb R_+.
\end{equation*}
\begin{theorem}\label{thm:non-existence-of-solutions}
  Let $u_s= u_s(t,y)$ be solution of \eqref{eq:heat-equation}, with initial datum $ U_s \in \mathcal{C}^\infty_c(\mathbb R_+)$ satisfying (H3) of \Cref{Hypotheses-US-us}.  
  Let 
  $$0< \sigma_0 <\, \frac{d|U''(a_0)|^{1/2}}{32(1+\no{U''_s}{L^\infty})}$$
  and let  $\mathbf{U}_{\rm in}$ be defined in \eqref{eq:intro-Uincomplex-series}. 
  Then $\mathbf{U}_{\rm in} \in \mathcal{G}^4_\sigma(\mathbb T, H^m_\lambda(\mathbb R_+, \mathbb C))$, for all $\sigma \in ( 0, \sigma_0)$, all $m\in \mathbb N$ and all $\lambda>0$. Moreover, 
  for at least one of the initial data
    \begin{equation*}
        u_{\rm in} \in \Big\{ \,{\rm Re}(\mathbf{U}_{\rm in}),\, 
        {\rm Im}(\mathbf{U}_{\rm in})\, \Big\}
        \subset 
        \mathcal{C}^\infty_c ( \mathbb T \times \mathbb R_+) 
        \cap 
        \bigg(
        \bigcap_{m \in \mathbb N}  
        \bigcap_{\lambda>0}  
        \bigcap_{0 < \sigma < \sigma_0 }  
        \mathcal{G}^4_\sigma\big(\mathbb T, H^m_\lambda(\mathbb R_+)\big)
        \bigg),
    \end{equation*}
    the system \eqref{eq:lin-Prandtl} admits no a weak solution in the sense of \Cref{def:weak-solution-Prandtl}.
\end{theorem}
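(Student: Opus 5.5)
The proof splits into two parts: the regularity of $\mathbf{U}_{\rm in}$, and non-existence by contradiction via Fourier projection combined with uniqueness and the growth of Theorem~\ref{thm:inflating-solutions}.

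\emph{Regularity of the datum.} By Part (i) of Theorem~\ref{thm:inflating-solutions}, each coefficient satisfies $\|\frac{d}{dy}\phi_{{\rm unst},k}\|_{H^m_\lambda}\le \mathcal{D}_m e^{2\lambda a_0}k^{\frac{2m-1}{4}}$. Since $U_s\in\mathcal{C}^\infty_c$, this holds for every $m$ and $\lambda$. Hence $e^{\sigma\sqrt[4]{k}}e^{-\sigma_0\sqrt[4]{k}}k^{(2m-1)/4}$ is bounded in $k$ for $\sigma<\sigma_0$, which gives membership in $\mathcal{G}^4_\sigma H^m_\lambda$. For compact support, $\frac{d}{dy}\phi_{{\rm unst},k}$ vanishes on $[0,a_0-d/2]$ and, for large $y$, equals $U_s'$, which is compactly supported. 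The supports are therefore contained in a fixed compact set independent of $k$. Smoothness follows from the summable bounds in every $H^m$, together with $k^j$ factors for $x$-derivatives absorbed by $e^{-\sigma_0\sqrt[4]{k}}$.

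\emph{Non-existence.} Write $\mathbf{U}_{\rm in}=\Re+i\Im$. Since the system is real-linear with real coefficients, if both real and imaginary parts admitted weak solutions $u^{(1)},u^{(2)}$ on some $(0,\delta)$, then $u:=u^{(1)}+iu^{(2)}$ would be a complex weak solution with datum $\mathbf{U}_{\rm in}$. Assume this. Test with $\varphi(t,x,y)=\psi(t,y)e^{-ikx}$ (real and imaginary parts separately). The Fourier coefficient $\hat u_k(t,y)=\frac{1}{2\pi}\int_{\mathbb T}u e^{-ikx}dx$ is then a weak solution of \eqref{eq:lin-Prandtl-projected-k} in $L^\infty L^2\cap L^2H^1_0$, with datum $e^{-\sigma_0\sqrt[4]{k}}\frac{d}{dy}\phi_{{\rm unst},k}$.

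The key step is \emph{uniqueness of weak solutions for fixed $k$}. Bootstrapping requires care, since the weak solution has only energy regularity. I would take the difference $w$ with the classical solution from \eqref{eq:fct-space-uk} and prove $w=0$ by a Gronwall energy estimate. The nonlocal term $ik\,\partial_y u_s\int_0^y w$ is bounded via the exponential decay of $u_s$ (which makes $\partial_y u_s\, y^{1/2}$ bounded, and by Cauchy--Schwarz $|\int_0^y w|\le y^{1/2}\|w\|_{L^2}$), so it is controlled by $C|k|\|w\|_{L^2}$. Justifying the energy identity for a merely weak solution needs a time-regularisation (Lions--Magenes type), since $\partial_t w\in L^2H^{-1}$ follows from the equation. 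Hence $\hat u_k=e^{-\sigma_0\sqrt[4]{k}}u_k$, with $u_k$ as in Theorem~\ref{thm:inflating-solutions}(ii).

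For the contradiction, choose $t_k=\frac{d}{16(1+\|U_s''\|_{L^\infty})}k^{-1/4}$. For $k\ge K$, $t_k<\delta$ holds for large $k$. Part (ii) gives
\[
\|\hat u_k(t_k)\|_{L^2}\ge c\,\exp\Big(-\sigma_0\sqrt[4]{k}+\tfrac{d\,|U_s''(a_0)|^{1/2}}{32\sqrt2(1+\|U_s''\|_{L^\infty})}\sqrt[4]{k}\Big).
\]
The paper's stated threshold seems to drop the $\sqrt2$ coming from $\sqrt{|U_s''(a_0)|/2}$. This needs checking, possibly using a growth rate slightly better than $\tfrac12$, so that the exponent tends to $+\infty$ under the stated restriction on $\sigma_0$. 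The difficulty is that the bound holds only at the single time $t_k$, while $u\in L^\infty(0,\delta;L^2)$ is defined only almost everywhere. I would instead integrate over $t\in[t_k/2,t_k]$. By Parseval,
\[
\|u\|_{L^\infty L^2}^2\ge \frac{2}{t_k}\int_{t_k/2}^{t_k}\|\hat u_k(t)\|^2dt,
\]
and this still grows exponentially in $\sqrt[4]{k}$, up to constants adjusted from the lower bound on $[t_k/2,t_k]$. This contradicts finiteness, so at least one of $\Re\mathbf{U}_{\rm in}$, $\Im\mathbf{U}_{\rm in}$ admits no weak solution.

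I expect the main obstacle to be the weak-solution uniqueness in the Fourier projection, namely justifying the energy equality at low regularity, together with the constant bookkeeping matching $\sigma_0$.
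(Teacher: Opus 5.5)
\textbf{Overall.} Your architecture is the paper's: project a putative weak solution onto Fourier modes, identify each mode with the unique solution from \Cref{thm:inflating-solutions}, and play the Gevrey weight $e^{-\sigma_0\sqrt[4]{k}}$ against the growth via Parseval. Your Lions--Magenes justification of mode-wise uniqueness is more careful than the paper, which only asserts it. Your complex combination $u^{(1)}+iu^{(2)}$ is fine but unnecessary. For $u_{\rm in}=\Re\mathbf U_{\rm in}$ the $k$-th coefficient ($k\ge 1$) is $\frac12 e^{-\sigma_0\sqrt[4]{k}}\frac{d}{dy}\phi_{{\rm unst},k}$, so the argument rules out weak solutions for each datum separately, which is what the paper does.

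\textbf{The gap.} The genuine gap is the step you leave open. With \Cref{thm:inflating-solutions}(ii) used as stated, the exponent at $t_k=\frac{d}{16(1+\|U_s''\|_{L^\infty})}k^{-1/4}$ is $\frac{d|U_s''(a_0)|^{1/2}}{32\sqrt2(1+\|U_s''\|_{L^\infty})}\sqrt[4]{k}$. So your argument only covers $\sigma_0$ below this value, a factor $\sqrt2$ short of the claimed range. Your check is right, and the paper's write-up does not resolve it. Its final display has the bracket $\frac{d\sqrt{|U_s''(a_0)|}}{\sqrt2\cdot 16(1+\|U_s''\|_{L^\infty})}-\sigma_0$, which drops the factor $\frac12$ in $\frac{t}{2}\sqrt{k|U_s''(a_0)|/2}$. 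Hence $\sigma_0$ is compared with twice the correct number.

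\textbf{How to close it.} You must open up the proof of \Cref{thm:inflating-solutions}. The coefficient in front of $\sqrt{k|U_s''(a_0)|/2}$ must rise from $\frac12$ to almost $\frac{1}{\sqrt2}$, and this is available.
\begin{itemize}
\item The $\sqrt2$ is lost only in \Cref{prop:estimates-remainders-and-uf}(iii), through the crude bound $|\partial_y^2u_s(t,a(t))|\ge\frac12|U_s''(a_0)|$ from \eqref{eq:assumptions-for-Tmax}.
\item In fact $\Re\sigma_k(t)=\frac{\sqrt{k}}{2}|\partial_y^2u_s(t,a(t))|^{1/2}$ is continuous with value $\frac{\sqrt k}{2}|U_s''(a_0)|^{1/2}$ at $t=0$, and the window $[0,t_k]$ shrinks to $0$.
\item Hence for every $\epsilon>0$ and $k$ large, on $[0,t_k]$ one has $\|u_k^{\rm fr}(t)\|_{L^2}\ge\frac12\|U_s'\|_{L^2(a_0+d/2,\infty)}\exp\big((1-\epsilon)\frac{\sqrt k}{2}|U_s''(a_0)|^{1/2}\,t\big)$.
\item The argument of \Cref{lemma:AuxiliaryEstimateMainTheorem} runs with this larger $c_1$, which is still $<c_2=1+2\|U_s''\|_{L^\infty}$.
\item The exponent at $t_k$ becomes $(1-\epsilon)\frac{d|U_s''(a_0)|^{1/2}}{32(1+\|U_s''\|_{L^\infty})}\sqrt[4]{k}$. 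This is exactly the stated threshold up to $\epsilon$, which the strict inequality on $\sigma_0$ absorbs.
\end{itemize}
Alternatively, keep the crude rate and use the longer window $t\le \frac{d}{8}(1+2\|U_s''\|_{L^\infty})^{-1/2}k^{-1/4}$ that \Cref{lemma:AuxiliaryEstimateMainTheorem} actually yields. Since $2(1+x)^2>1+2x$, this also exceeds $\frac{d|U_s''(a_0)|^{1/2}}{32(1+\|U_s''\|_{L^\infty})}$.

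\textbf{A caution on averaging.} If you average over $[t_k/2,t_k]$, integrate the exponential lower bound. That integral is dominated by the right endpoint and loses only a factor of order $k^{-1/4}$. Bounding by the minimum over the window would halve the exponent. Simpler options are:
\begin{itemize}
\item evaluate at $t_k$ directly, since your Lions--Magenes step gives $\hat u_k\in\mathcal C([0,\delta];L^2)$;
\item or follow the paper: fix an admissible time $t_0$ where Parseval holds, then take $k=\big\lfloor\big(\frac{d}{16(1+\|U_s''\|_{L^\infty})\,t_0}\big)^4\big\rfloor$ and let $t_0\to0^+$.
\end{itemize}
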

\noindent 
In summary, \Cref{thm:non-existence-of-solutions} establishes Hadamard ill-posedness of the non-autonomous linearised Prandtl equations~\eqref{eq:lin-Prandtl} both in Sobolev spaces and Gevrey classes of order $4$. In contrast to  \cite{MR2601044}, this ill-posedness persists even when the initial time is no variable of the system.

\smallskip

\subsection{Structure of the proof}\label{sec:novelties-some-remarks}$\,$

\noindent 
The proof follows a perturbative strategy: we construct exponentially growing approximate solutions and show that the associated error can be controlled.
We outline the main steps in the proof of \Cref{thm:inflating-solutions}. The proof of \Cref{thm:non-existence-of-solutions} is deferred to Section \ref{sec:proof-of-second-main-thm}.
\begin{itemize}[leftmargin=0.7cm]
    \item[a)] In \Cref{sec:matched-asymptotics} and \Cref{sec:estimates-forced-terms}, we build a family of forced quasi-eigenmodes $u_k^{\rm fr}$ satisfying estimates analogous to those in \Cref{thm:inflating-solutions}. In particular, there exist constants $c_0,\, c_1>0$ such that $\| \, u_k^{\rm fr}(t, \cdot) \,\|_{L^2} \geq c_0 \, e^{c_1 \sqrt{k} \, t}$. Moreover, $u_k^{\rm fr}$  generates a forcing term $f_k(t,y) e^{c_1 \sqrt{k} \, t}$ with 
    \begin{equation}\label{intro:behaviour-forcing-term}
       \| \, f_k(t, \cdot)\,  \|_{L^2(\mathbb R_+)} =\mathcal{O}\Big( e^{-c_2/t} + e^{-c_3\sqrt{k}}\Big)
    \end{equation}
    for some constants $c_2,\, c_3>0$ (see \Cref{prop:remainder-exact-form} and \Cref{prop:estimates-remainders-and-uf}). The initial datum  satisfies $u_k^{\rm fr}(0, \cdot) $ coincides with $\frac{d}{dy} \phi_{{\rm unst}, k}$ introduced in \Cref{def:uink-into}.
    \item[b)] 
    In \Cref{sec:correcting-the-perturbed-solution}, with \Cref{prop:correction}, we estimate the correction $\delta u_k= u^{\rm fr}_k- u_k$ between the approximate solution $u^{\rm fr}_k$ and the exact solution $u_k$  with $\delta  u_k (0, \cdot) = 0$. Let $T_k(\tau, t)\in \mathcal{L}(L^2(\mathbb{R}_+), L^2(\mathbb{R}_+))$  denote the associated two-parameter semigroup on $L^2(\R_+)$. Then $u_k$ satisfies the Duhamel formula
    \begin{equation*}
        u_k(t,\cdot) = u_k^{\rm fr} (t, \cdot) - \delta u_k(t, \cdot),\quad \text{with}\quad \delta u_k(t,\cdot) = \int_0^t T_k(\tau, t) f_k(\tau, \cdot) e^{c_1 \sqrt{k} \, \tau }d\tau.
    \end{equation*}
    To control this term, we establish the upper bound 
    \begin{equation}\label{intro:upper-bound-of-Tk}
        \|\, T_k(\tau, t) \,\|_{\mathcal{L}(L^2(\mathbb R_+), L^2(\mathbb R_+))} \leq e^{c_4 \sqrt{k} (t-\tau)},
    \end{equation}
    for constants $c_4>c_1>0$, uniformly in $k\in \mathbb N$. Combining these ingredients yields
    \begin{align*}
        \|\, u_k(t, \cdot) \,\|_{L^2(\mathbb R_+)} 
        &\geq \|\, u_k^{\rm fr}(t,\cdot)\, \|_{L^2(\mathbb R_+)} - \|\, \delta u_k (t, \cdot)\, \|_{L^2(\mathbb R_+)} \\
        &\geq c_0 e^{c_1 \sqrt{k} \, t} - c_5  \int_0^t e^{c_4\sqrt{k}(t-\tau) + c_1 \tau} \big( e^{ - c_2/\tau } + e^{ - c_3 \,\sqrt{k}} \big)d\tau \\
        &\geq c_0 e^{c_1 \sqrt{k} \, t} - c_5 \big( e^{ c_4 \sqrt{k} \, t - c_2/t } + e^{ c_4 \sqrt{k} \, t - c_3 \,\sqrt{k}} \big) t, 
    \end{align*}
    for some constant $c_5>0$.
    \item[c)] We conclude the proof of \Cref{thm:inflating-solutions} in \Cref{sec:proof-of-first-theorem} by choosing a time interval of length $\sim k^{-\frac{1}{4}}$, for which the last negative term remains  $\smallO(1)$ as $k \to \infty$, whereas  the leading term $c_0 e^{c_1 \sqrt{k} \, t}$ dominates. For instance, one may take $t \leq (c_2/c_4)^{\frac{1}{2}} k^{-\frac{1}{4}}$ with $k\gg 1$. 

    \smallskip 
    \noindent
    Tracking the dependence of $c_0, \dots, c_5 > 0$ on $U_s$ and $d > 0$ yields the explicit bounds stated in \Cref{thm:inflating-solutions}.
\end{itemize}
We mention that we cannot reach an improved time $k$-scaling, since $c_4 > c_1$. 

\smallskip 
\noindent 
The construction of $u_k^{\rm fr}$ in Part-(a) is at the core of our approach. The main novelty is the asymptotics \eqref{intro:behaviour-forcing-term} of the forcing term:  G\'erard-Varet and Dormy \cite{MR2601044} determined, for any $N \in \mathbb N$, approximate solutions with  $f_k \in \mathcal{O}\big(t^N + k^{-\frac{N}{2}} \big) $, namely polynomially decaying both as $k\to \infty$ and $t \to 0$; We establish  that, if $u_s$ satisfies the heat equation, one can reach the asymptotics of  \eqref{intro:behaviour-forcing-term} (the essential factor for Part~(b) and Part~(c)).

\smallskip 
\noindent
The function $u_k^{\rm fr}$ is defined via a matched asymptotic expansion (e.g.~\cite{holmes2012introduction}). We set in \Cref{def:inner-outer-profiles}
\begin{equation*}
    u_k^{\rm fr}(t,y) = \partial_y \phi_{{\rm inn}, k}(t,y) + \partial_y \phi_{{\rm out}, k}(t,y),\qquad (t,y) \in [0, T_{\rm max}] \times \mathbb R_+,
\end{equation*}
for some $T_{\rm max}>0$ and where $\phi_{{\rm inn}, k}$ and $\phi_{{\rm out}, k}$ are layers localised near and far from $y = a_0$, respectively.  

\smallskip 
\noindent
The inner profile $\phi_{{\rm inn}, k}$ is supported in $y \in [a_0 - \tfrac{d}{2}, a_0 + \tfrac{d}{2}]$ and relies on an instability ansatz of the equations in the region $y \in [a_0 - \tfrac{d}{4}, a_0 + \tfrac{d}{4}]$, where $U_s$ is quadratic. The outer layer $\phi_{{\rm out}, k}$ is supported in $y \geq a_0 + \tfrac{d}{4}$ and exhibits a similar instability to $\phi_{{\rm inn}, k}$ for $y \geq a_0 + \tfrac{d}{2}$. 
Below, we provide some heuristics and refer to \Cref{sec:matched-asymptotics} for further details.


\smallskip 
\noindent 
The construction of $\phi_{{\rm out}, k}$  relies mainly on the following observation:  for any $\sigma_k \in \mathcal{C}([0,T_{\rm max}],\mathbb{C})$, then
\begin{equation*}
    \phi_{{\rm out},k}(t,y) 
    := 
    \bigg( u_s(t,y) + \frac{\sigma_k(t)}{ik} \bigg) 
    \exp \bigg( \int_0^t \sigma_k(\tau ) d \tau\bigg) \qquad (t,y) \in [0, T_{\rm max}] \times \Big[ a_0 + \frac{d}{2}, + \infty \Big[
\end{equation*}
satisfies the identity:
\begin{align*}
   \Big(\,
    \partial_t + ik \, u_s 
    - \partial_y^2 \,\Big) 
    &\partial_y \phi_{{\rm out},k} 
    - ik \, \partial_y u_s \, \phi_{{\rm out},k}
    = 
    \\
    &= 
    \bigg( 
    \partial_t \partial_y u_s  + \sigma_k \, \partial_y  u_s + i k \,u_s\, \partial_y u_s  - \partial_y^3 u_s 
    -
    i k \, \partial_y u_s 
    \Big( u_s + \frac{\sigma_k }{ik} \Big)
    \bigg) e^{ \int_0^t \sigma_k }
    = 0.
\end{align*}
Notably, $\partial_t \partial_y u_s - \partial_y^3 u_s = 0$, since $\partial_y u_s$ satisfies the heat equation. Due to the boundary conditions, these functions cannot define alone ``unstable'' quasi-eigenmodes on $y>0$. On the other hand, by first building an ``unstable'' inner layer, we can incorporate the corresponding $\sigma_k$ to the above expression. This is precisely possible because $u_s$ satisfies the heat equation. By contrast (e.g.~if $u_s(t,y)=U_s(y)$) an $\mathcal{O}(1)\cdot e^{c_1 \sqrt{k} t}$ remainder would  need to be corrected. In \cite{MR2601044}, the correction led to the mentioned  $\mathcal{O}(t^N + k^{-\frac{N}{2}})\cdot e^{c_1 \sqrt{k} t}$. If $u_s$ satisfies the heat equation, no correction is needed.

\smallskip 
\noindent 
The inner layer $\phi_{{\rm inn}, k}$ is more involved; its construction is based on two aspects: first, $u_s(0,y)= U_s(y)$ is quadratic in $y \in [a_0 - \tfrac{d}{4},\, a_0 + \tfrac{d}{4}]$; second, for $t>0$, the deviation of $u_s(t,y)$ from its second-order Taylor expansion becomes in the same region exponentially small as $t \to 0+$. 

\smallskip 
\noindent 
We recall from  \cite{DeAnnaKortum2025} that, for any constants $\sigma_k \in \mathbb{C}$,  $a >0$, $\alpha \in \mathbb R$ and  $\beta <0 $, the spectral ODE arising from the autonomous linear Prandtl equations around a quadratic shear profile
\begin{equation*}
    \bigg( \sigma_k  + i k \,\Big( \alpha + \beta (y-a)^2 \Big) \bigg) \phi_k'(y) - 2 i k\,  \beta (y-a) \phi_k(y) - \phi_k'''(y) = 0
\end{equation*}
admits explicit general solutions in terms of hypergeometric functions. Among these solutions, we focus on the one characterised by $\Upsilon$ of \Cref{def:uink-into}:
\begin{equation*}
\sigma_k = \sqrt{|\beta | k} \, e^{-\frac{\im \pi}{4}} - \im  k \,\alpha \in \mathbb C,\qquad 
    \phi_k(y) =  
    \Upsilon 
    \Big( \,(y-a) \, e^{-\frac{i\pi}{8}} \sqrt[4]{ k |\beta|} \, \Big)
    \sqrt{\frac{|\beta|}{k}} e^{i \frac{5 \pi}{4}}.
\end{equation*}
For simplicity, a concise proof of this assertion is provided in \Cref{lemma:quadratic-unstable-quasi-eigenmode}. As the real part of $\sigma_k$ is strictly positive, the corresponding spectral mode exhibits exponential growth in time. 

\smallskip 
\noindent 
Our strategy in \Cref{sec:matched-asymptotics} involves tracking the critical point $\partial_y u_s(t,a(t)) = 0$ with $a(0) = a_0$, for finite time $t\in [0, T_{\rm max}]$ and define $\phi_{{\rm inn}, k}$ in the region $(t,y) \in [0, T_{\rm max}] \times  [a_0 - \tfrac{d}{4},\, a_0+\tfrac{d}{4}]$ via the above $\Upsilon$-profile:
\begin{align*}
    \alpha(t) &:= u_s(t,a(t)),\qquad 
    \beta(t) := \partial_y^2u_s(t,a(t)),\qquad 
    \sigma_k(t) 
    := 
    \sqrt{|\beta(t) | k} \, e^{-\frac{\im \pi}{4}} - \im  k \,\alpha(t),\\
    \phi_{{\rm inn}, k}(t,y) 
    &:= 
    \Upsilon 
    \Big( \,(y-a(t)) \, e^{-\frac{i\pi}{8}} \sqrt[4]{ k |\beta(t)|} \, \Big)
    \sqrt{\frac{|\beta(t)|}{k}} e^{i \frac{5 \pi}{4}}
    \exp \bigg( \int_0^t \sigma_k(\tau) d \tau\bigg).
\end{align*}
As explained, this construction determines simultaneously $\phi_{{\rm out}, k}$. Unlike the outer layer, however, $\phi_{{\rm inn}, k}$ does not solve System \eqref{eq:lin-Prandtl}  in the region $(t,y) \in ]0, T_{\rm max}[ \times [a_0-\frac{d}{4},\, a_0+\frac{d}{4}]$, since $u_s(t,y)$ ceases to be quadratic at any $t>0$. Consequently, nontrivial remainders appear near $y = a_0$ at any $t>0$. 

\smallskip 
\noindent 
From the properties of the function $\Upsilon$, however, we establish that these remainders mainly depend on the deviation of $u_s $ from its second Taylor approximation (cf.~\Cref{prop:remainder-exact-form}-Part (a)). Thanks to the Green’s formula, they depend roughly on
\begin{equation*}
	u_s(t,y) -\alpha(t)  - \frac{\beta(t) }{2}(y-a(t))^2
    = 
    \frac{(y-a(t))^3}{6}
    \frac{1}{\sqrt{4\pi t}}
	\int_0^\infty \Big( e^{- \frac{|\xi(t,y) -w|^2}{4t}} - e^{- \frac{|\xi(t,y) + w|^2}{4t}} \Big) U_s^{(3)}(\omega)d\omega,
\end{equation*}
where $\xi(t,y)$ lies between $y$ and $a(t)$. The third derivative of $U_s $ is zero in $[a_0-d, a_0+d]$ and the last integral is therefore $\mathcal{O}(e^{-c_2/t})$ for some constant $c_2 > 0$ proportional to $d > 0$. Consequently, the remainders of $\phi_{{\rm inn}, k}$ near $y = a_0$ are estimated by the first asymptotics of \eqref{intro:behaviour-forcing-term}. Here the quadratic assumption becomes crucial: if $U^{(3)}_s \not\equiv 0$ near $y = a_0$, the integral would be $\mathcal{O}(t)$ at best. 

\smallskip 
\noindent 
Once the inner and outer layers are defined on $y \in [a_0 - \tfrac{d}{4}, a_0 + \tfrac{d}{4}]$ and $y \geq a_0 + \tfrac{d}{2}$, respectively, we match them in the remaining regions. The transition is handled via the cutoff $\chi$ of \Cref{def:uink-into}. 

\smallskip 
\noindent 
Multiplying the $\Upsilon$-profile by $\chi$, the inner layer $\phi_{{\rm inn}, k}$ extends to $(t,y) \in [0, T_{\rm max}] \times  \mathbb R_+$ as follows:
\begin{equation*}
    \phi_{{\rm inn}, k}(t,y) 
    = \chi(y)\Upsilon 
    \Big( \,(y-a(t)) \, e^{-\frac{i\pi}{8}} \sqrt[4]{ k |\beta(t)|} \, \Big)
    \sqrt{\frac{|\beta(t)|}{k}} e^{i \frac{5 \pi}{4}}
    \exp \bigg( \int_0^t \sigma_k(\tau) d \tau\bigg).
\end{equation*}
It satisfies $\phi_{{\rm inn}, k}(t, 0) = \partial_y \phi_{{\rm inn}, k}(t, 0) = 0$. Additionally, due to the exponential decay $\Upsilon(\zeta) \sim e^{-\zeta^2/2}$ in the sector $\arg(\zeta)\in ]\frac{3\pi}{4}, \frac{5\pi}{4}[$, the remainders generated by $\phi_{{\rm inn}, k}$ in the region $y \in [0, a_0 - \tfrac{d}{4}]$ are indeed of order $\mathcal{O}(e^{-c_3\sqrt{k}}) \cdot e^{c_1 \sqrt{k} t}$ as in \eqref{intro:behaviour-forcing-term}. 

\smallskip 
\noindent 
The transition  in the region $y \in [a_0+\tfrac{d}{4},\, a_0 + \tfrac{d}{2}]$ is more subtle. We globally extend $\phi_{\text{out},k}$ via the cutoff function:
\begin{equation*}
    \phi_{{\rm out},k}(t,y) 
    := 
    (1-\chi(y) )H(y-a(t)) \bigg( u_s(t,y) + \frac{\sigma_k(t)}{ik} \bigg) 
    \exp \bigg( \int_0^t \sigma_k(\tau ) d \tau\bigg).
\end{equation*}
Both $\phi_{\text{inn},k}$ and $\phi_{\text{out},k}$ are $\mathcal{O}(1) \cdot e^{c_1\sqrt{k} t}$ in  $(t,y) \in [0, T_{\rm max}] \times  [a_0+\tfrac{d}{4}, a_0 + \tfrac{d}{2}]$. However, from the properties of $\Upsilon$ in \Cref{def:uink-into}, and the fact that
\begin{equation*}
    \erf(\zeta) \to 1,\qquad \text{as}\qquad 
    \zeta\to \infty\qquad\text{with}\qquad 
    \arg(\zeta) = -\frac{\pi}{8},
\end{equation*}
the asymptotics of $\phi_{\text{inn},k}(t,y)$ in the region 
$(t,y) \in [0, T_{\rm max}] \times  [a_0+\tfrac{d}{4}, a_0 + \tfrac{d}{2}]$ as $k\to +\infty$ is given by
\begin{equation*}
\begin{aligned}
    \phi_{\text{inn},k}(t,y) 
    &\sim
   \chi(y) \left(
        1+ \sqrt{|\beta(t)|k}\,(y-a(t))^2e^{-\frac{\im \pi}{4}}
    \right)
    \frac{1}{2}
    \left(
        1 + 1 \right)
    e^{\im \frac{5\pi}{4}}
    \sqrt{\frac{|\beta(t)|}{2k}}
     \exp \bigg( \int_0^t \sigma_k(\tau) d \tau\bigg)
    \\
    &\sim
    \chi(y)\left(
        \frac{\sigma_k(t)}{ik} + u_s(t,a(t))  +
        \frac{\partial_y^2 u_s(t, a(t))}{2}\,(y-a(t))^2
    \right)
    \exp \bigg( \int_0^t \sigma_k(\tau) d \tau\bigg).
\end{aligned}
\end{equation*}
This term is balanced by the corresponding $\chi$-term in the outer layer $\phi_{\text{out},k}$: in the region $(t,y) \in [0, T_{\rm max}] \times  [a_0+\tfrac{d}{4}, a_0 + \tfrac{d}{2}]$, where the main transition occurs, the leading-order term of the remainders still depends on
\begin{equation*}
    \chi(y) 
    \Big( 
        u_s(t,a(t)) + 
        \frac{\partial_y^2 u_s(t, a(t))}{2}\,(y-a(t))^2 -u_s(t,y)
    \Big).
\end{equation*}
Once more, under the quadratic assumption and the fact that $u_s$ satisfies the heat equation, the order of these remainders is $\mathcal{O}(e^{-c_2/t})$.

\smallskip 
\noindent 
We conclude this introduction with a final comparison between our approach and a prior result.
\begin{remark}\label{remark:GV-Nguyen}
Our result extends that of \cite{MR2952715}. For the autonomous case $u_s(t,y) = U_s(y)$ of System \eqref{eq:lin-Prandtl},  Gérard-Varet and Nguyen identified initial data $u_{\rm in} \in e^{-y} H^\infty(\mathbb{T} \times \mathbb{R}_+)$ that, for any $T>0$, fail to generate distributional solutions  with $
   u \in  L^\infty(0, T; L^2(\mathbb T \times \mathbb R))$ and $\partial_y u \in  L^2((0,T) \times \mathbb T \times \mathbb R))$.  In this remark, we highlight why \Cref{thm:non-existence-of-solutions} is not a simple adaptation.

   \smallskip 
   \noindent 
   The proof in \cite{MR2952715} relies on a contradiction, assuming that \eqref{eq:lin-Prandtl} defines a continuous linear map
\begin{equation*}
    \mathcal{T} : e^{-y} H^\infty(\mathbb T\times \mathbb R_+) \to L^\infty(0, T; L^2(\mathbb T \times \mathbb R)) \times L^2((0,T) \times \mathbb T \times \mathbb R),\quad 
    \text{with}\quad \mathcal{T}(u_{\rm in}) = (u, \, \partial_y u).
\end{equation*}
Namely, any $u_{\rm in} \in e^{-y} H^\infty(\mathbb T \times \mathbb R)$ defines a unique $u$, and there are $K \in \mathbb{N}$ and $\mathcal{C} > 0$ (independent of $u_{\rm in}$) with
\begin{equation*}
    \sup_{t \in [0, T]} \| u(t) \|_{L^2_{x,y}} + \| \partial_y u \|_{L^2_{t, x, y}} \leq \mathcal C \| e^y u_{\rm in} \|_{H^{K}_{x,y}}.
\end{equation*}
We refer to page 5 and 6 of \cite{MR2952715}. The associated autonomous semigroup $S(t)(u_{\rm in}) = (\mathcal{T}(u_{\rm in}))_1(t) \in L^2_{x,y}$ satisfies for any $0 \leq s \leq t \leq T$:
\begin{equation*}
    \| S(t-s) \|_{\mathcal{L}(H^{K}_{x,y}, L^2_{x,y})} \leq \mathcal{C}
    \quad \Leftrightarrow \quad \max_{0 \leq t \leq T}\| S(t) \|_{\mathcal{L}(H^{K}_{x,y}, L^2_{x,y})}  \leq  \mathcal{C}<+\infty.
\end{equation*}
This uniform bound is fundamental: at page 6 of \cite{MR2952715}, with $\ee = 1/\sqrt{k}$, they leverage the unstable approximate solutions $u_\ee^n$ of \cite{MR2601044} with remainders $r_\ee^n$  to estimate a correction $v(t) = u - u_\ee^n$ via the Duhamel relation
\begin{equation*}
    \| v(t) \|_{L^2_{x,y}} = 
    \bigg\| \int_0^t S(t-s) r_\ee^n(s) ds \bigg\|_{L^2_{x,y}} 
    \leq \mathcal{C} \int_0^t \| e^y r_\ee^n (s) \|_{H^{K}_{x,y}} ds.
\end{equation*}
Their analysis leads eventually to an estimate that fails to hold for small $\ee > 0$ and $t>0$. In the non-autonomous case, however, the corresponding inequality would be
\begin{equation*}
    \| v(t) \|_{L^2_{x,y}} \leq 
   \int_0^t \| S(s,t) r_\ee^n(s) \|_{L^2_{x,y}} ds \leq 
   \mathcal{C} \int_0^t \| e^y r_\ee^n (s) \|_{H^{K}_{x,y}} ds,
\end{equation*}
which holds only under the further assumption that  $S$ is uniformly continuous: $\mathcal C>0$ and $K\in \mathbb N$ do not change if the initial time for $\mathcal T$ is replaced by $t =s\geq 0$. Hence, any eventual contradiction based uniquely on this approach would not distinguish whether is from non-existence of solutions or from the fact that $S$ is simply not uniformly continuous. Therefore, the second case would not improve the result of \cite{MR2601044}. 
\end{remark} 

\section{Ill-posedness induced by the initial time}\label{sec:toy-model}

\noindent 
We present a toy model for a two parameter semigroup $S = S(\tau, t)$ satisfying the relations introduced in \eqref{eq:intro-S-semigroup-relations}. For simplicity, we restrict our attention to a linear PDE in the variables $(t,x) \in \mathbb R_+ \times \mathbb T$. 

\smallskip 
\noindent 
We introduce an operator $\mathcal{A}\in L^\infty(\mathbb R_+, \mathcal{L}(H^m(\mathbb T), H^{m-\frac{1}{2}}(\mathbb T) ))$, for any $m \in \mathbb{R}$. Given a function $\phi \in H^m(\mathbb{T})$
\begin{equation*}
	\phi(x) = \sum_{k\in \mathbb Z} \phi_{k} e^{ikx}, \qquad 
    \sum_{k \in \mathbb Z} (1+k^2)^m|\phi_k|^2<+\infty,
\end{equation*}
$[\mathcal{A}(t)](\phi) \in  H^{m-\frac{1}{2}}(\mathbb T)$ acts on $\phi$ via the following Fourier multipliers at any time $t\geq 0$:
\begin{equation*}
	[\mathcal{A}(t) \phi](x)
	:=
	\sum_{k\in \mathbb Z\setminus\{0\}}
	\sqrt{|k|}					
	A_k(t) 
	\phi_k	
	e^{i kx}
	\quad 
	\text{with}
	\quad 
	A_k(t) 
	:= -		
		\mathbf{1}_{\left[0, \frac{1}{2\sqrt[4]{|k|}}\right)}(t)  	
		+
		\mathbf{1}_{\left[\frac{1}{2\sqrt[4]{|k|}}, \frac{1}{\sqrt[4]{|k|}}\right)}(t).
\end{equation*}
Each $A_k$ belongs to $L^\infty(\mathbb R)$ and $\| \sqrt{|k|} A_k \|_{L^\infty(\mathbb R_+)}\leq \sqrt{|k|}$, so that $[\mathcal{A(\cdot)}](\phi) \in L^\infty(\mathbb R_+,H^{m-\frac{1}{2}}(\mathbb T)) $.  

\smallskip 
\noindent 
Next, given an arbitrary initial time $\tau \geq 0$ and initial datum $\phi_{in} \in H^m(\mathbb T)$, we address the Cauchy problem:
\begin{equation}\label{eq:evol-eq}
	\begin{cases}
		\phi_\tau'(t)  = 
		\mathcal{A}(t) \phi_\tau(t)\qquad \qquad t \in [\tau , \infty) ,
		\\
		\phi_\tau(\tau ) = \phi_{in}.
	\end{cases}
\end{equation}
As main property: when $\tau = 0$, each Fourier multiplier $\sqrt{|k|}A_k(t)$ produces a short-time dissipation $e^{-\sqrt[4]{|k|}t}$ on the mode $\phi_{0,k}(t)$ for $t \in [0, \,\tfrac{1}{2}|k|^{-1/4})$, followed by an opposite inflation phase in $t \in [\tfrac{1}{2}|k|^{-1/4}, \, |k|^{-1/4})$ that restores the initial amplitude; thereafter, the solution remains constant.

\smallskip 
\noindent 
The following result states  that \eqref{eq:evol-eq} is weakly well-posed in any Sobolev space for any initial time $\tau \geq 0$.
\begin{lemma}\label{thm:well-posedness-Sobolev}
	For any $m \in \mathbb R$, any $ \tau  \geq 0$ and any $\phi_{in} \in H^m(\mathbb T)$, 
	\eqref{eq:evol-eq} admits a unique solution
	\begin{equation}\label{fct-space-phi}
		\phi_\tau  \in \mathcal{C}( [\tau, +\infty), H^m(\mathbb T))\cap 
		W^{1, \infty}( [\tau, +\infty), H^{m-\frac{1}{2}}(\mathbb T)).
	\end{equation}
	Additionally, given the constants  $
	C_0 := 1 $ and $C_\tau := e^{1/\tau}$, for any $\tau >0$, the following inequality holds true:
\begin{equation}\label{eq:cont-wrt-initial-data}
		\max_{\tau \leq t \leq +\infty} \| \phi_\tau(t) \|_{H^m(\mathbb T)} \leq C_\tau \| \phi_{\rm in} \|_{H^m(\mathbb T)}.
\end{equation}
\end{lemma}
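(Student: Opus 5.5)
The system decouples completely in Fourier variables, so I will solve it mode by mode explicitly and then sum. For each $k\in\mathbb Z\setminus\{0\}$ the $k$-th Fourier coefficient $\phi_{\tau,k}(t)$ must satisfy the scalar ODE
\[
\phi_{\tau,k}'(t)=\sqrt{|k|}\,A_k(t)\,\phi_{\tau,k}(t),\qquad \phi_{\tau,k}(\tau)=\phi_{{\rm in},k},
\]
and $\phi_{\tau,0}$ is constant. Since $A_k\in L^\infty$, the unique absolutely continuous solution is
\[
\phi_{\tau,k}(t)=\exp\Big(\sqrt{|k|}\int_\tau^t A_k(s)\,ds\Big)\phi_{{\rm in},k}.
\]
I will define $\phi_\tau(t)=\sum_k\phi_{\tau,k}(t)e^{ikx}$ and first bound the multiplier $M_k(\tau,t):=\exp(\sqrt{|k|}\int_\tau^tA_k)$ uniformly in $k$ and $t\ge\tau$.

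The key computation is the following. The primitive $F_k(t):=\sqrt{|k|}\int_0^tA_k$ equals $-\sqrt{|k|}\,t$ on $[0,\tfrac12|k|^{-1/4}]$. It then rises back linearly to $0$ at $t=|k|^{-1/4}$ and stays $0$ afterwards. Hence $F_k\le 0$ everywhere and $F_k\ge -\tfrac12|k|^{1/4}$. Also $M_k(\tau,t)=e^{F_k(t)-F_k(\tau)}$.

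For $\tau=0$ this gives $M_k\le 1$, so $C_0=1$.

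For $\tau>0$ I bound $-F_k(\tau)$ in two cases. If $\tau\ge|k|^{-1/4}$, then $F_k(\tau)=0$ and $M_k\le1$. Otherwise $|k|^{1/4}<1/\tau$. Since $F_k(\tau)\ge-\sqrt{|k|}\min(\tau,|k|^{-1/4}-\tau)\ge -\tfrac12|k|^{1/4}$, we get $M_k\le e^{|k|^{1/4}/2}\le e^{1/(2\tau)}\le e^{1/\tau}$.

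Thus $|\phi_{\tau,k}(t)|\le C_\tau|\phi_{{\rm in},k}|$ for all $k$ and all $t\ge\tau$. Summing with the weights $(1+k^2)^m$ yields \eqref{eq:cont-wrt-initial-data}.

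Next come the regularity statements \eqref{fct-space-phi}. Continuity in $H^m$ follows by dominated convergence on the Fourier series: each coefficient is continuous in $t$ and dominated by $C_\tau|\phi_{{\rm in},k}|$, which is square-summable against the weights. For the $W^{1,\infty}$ part, the distributional derivative has coefficients $\sqrt{|k|}A_k\phi_{\tau,k}$, which are bounded by $\sqrt{|k|}\,C_\tau|\phi_{{\rm in},k}|$. This gives $\phi_\tau'\in L^\infty([\tau,\infty),H^{m-1/2})$ with norm at most $C_\tau\|\phi_{\rm in}\|_{H^m}$. Together with $\phi_\tau\in L^\infty(H^m)\subset L^\infty(H^{m-1/2})$, this places $\phi_\tau$ in $W^{1,\infty}([\tau,\infty),H^{m-1/2})$ and shows the equation holds a.e.

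Uniqueness is obtained by testing against the Fourier modes. Any solution in the class \eqref{fct-space-phi} has coefficients that are Lipschitz scalar functions solving the linear ODE above. Gronwall's lemma, or the explicit integrating factor $e^{-F_k}$, then forces them to coincide with the formula.

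I expect no serious obstacle. The only delicate point is the sharp $k$-uniform bound on $-F_k(\tau)$, namely checking that the constraint $\tau<|k|^{-1/4}$ converts the potential growth $e^{|k|^{1/4}/2}$ into the $k$-independent factor $e^{1/\tau}$. This bound degenerates as $\tau\to0^+$ even though $C_0=1$, and that degeneration is precisely the behaviour \eqref{eq:intro-S-semigroup-relations} illustrates.
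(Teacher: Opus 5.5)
Your proof is correct and follows the paper's argument: you solve each Fourier mode explicitly via $\exp\bigl(\sqrt{|k|}\int_\tau^t A_k\bigr)$, bound this multiplier uniformly in $k$, get continuity by dominated convergence, and obtain uniqueness from the scalar ODEs. The differences are minor: you bound the multiplier through the primitive $F_k\le 0$, which gives the slightly sharper constant $e^{1/(2\tau)}$, and you use a single dominated-convergence argument, whereas the paper splits into the finitely many modes $|k|\le \tau^{-4}$ plus a time-independent high-frequency tail.
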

\noindent 
Thus, by setting $S(\tau, t)(\phi_{\rm in}) := \phi_\tau(t)$, we define a (non-uniform) strongly continuous evolution family of linear bounded operator on $H^m(\mathbb T)$. Moreover, since $C_0 =1$, then $S(0, \cdot)$ satisfies
\begin{equation*}
    \sup_{0 \leq t < + \infty} 
    \big\| 
        \,S(0,t) \,
    \big\|_{\mathcal{L}(H^{m}(\mathbb T), H^{m}\mathbb T))} \leq 1,\qquad 
    \text{for any }m \in \mathbb R,
\end{equation*}
which is the first inequality of \eqref{eq:intro-S-semigroup-relations}. The next lemma establishes the second relation.
\begin{lemma}\label{thm:impostor-identity}
For any $\delta >0$, any $0 \leq \sigma < 1$ and any $m_1,\, m_2 \in \mathbb R$ the following identity holds true:
\begin{equation}\label{eq:impostor-identity}
	\sup_{0 \leq \tau \leq t \leq \delta} 
	\left\|
		e^{-\sigma (t-\tau) \sqrt{|\partial_x|} }
		S(\tau,t)
	\right\|_{\mathcal{L}( H^{m_1}(\mathbb T), H^{m_2}(\mathbb T)\big)}
	= + \infty.
\end{equation}
\end{lemma}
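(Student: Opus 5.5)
The plan is to exhibit explicit pairs $(\tau,t)$ and single-frequency data for which the operator norm in \eqref{eq:impostor-identity} blows up. Since $\mathcal{A}(t)$ is a Fourier multiplier, \eqref{eq:evol-eq} decouples into scalar ODEs $\phi_{\tau,k}'(t) = \sqrt{|k|}A_k(t)\phi_{\tau,k}(t)$, with explicit solution
\begin{equation*}
\phi_{\tau,k}(t) = \exp\Big(\sqrt{|k|}\int_\tau^t A_k(s)\,ds\Big)\phi_{{\rm in},k}.
\end{equation*}
Hence $e^{-\sigma(t-\tau)\sqrt{|\partial_x|}}S(\tau,t)$ is the Fourier multiplier with symbol $\exp\big(\sqrt{|k|}\int_\tau^t A_k - \sigma(t-\tau)\sqrt{|k|}\big)$. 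Its norm from $H^{m_1}$ to $H^{m_2}$ is at least the value on the single mode $e^{ikx}$, namely
\begin{equation*}
(1+k^2)^{\frac{m_2-m_1}{2}}\exp\Big(\sqrt{|k|}\int_\tau^t A_k(s)\,ds-\sigma(t-\tau)\sqrt{|k|}\Big).
\end{equation*}

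For each large $k\in\mathbb N$, take $\tau_k := \frac{1}{2}k^{-1/4}$ and $t_k := k^{-1/4}$, so that the pair lies in $[0,\delta]$ once $k$ is large. On $[\tau_k,t_k)$ we have $A_k\equiv 1$, so
\begin{equation*}
\sqrt{k}\int_{\tau_k}^{t_k}A_k\,ds=\tfrac12 k^{1/4},
\end{equation*}
while the damping term equals $\sigma(t_k-\tau_k)\sqrt{k}=\tfrac{\sigma}{2}k^{1/4}$. The lower bound therefore reads
\begin{equation*}
(1+k^2)^{\frac{m_2-m_1}{2}}\exp\Big(\tfrac{1-\sigma}{2}k^{1/4}\Big).
\end{equation*}
Because $\sigma<1$, the stretched exponential beats any polynomial factor in $k$, whatever the sign of $m_2-m_1$. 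So this quantity tends to $+\infty$ as $k\to\infty$, which gives \eqref{eq:impostor-identity}.

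The only point needing care is the bookkeeping. Lemma \ref{thm:well-posedness-Sobolev} guarantees that $S(\tau,t)$ acts mode by mode through the formula above, justifying the reduction to scalar ODEs. Testing on a unit-norm single mode $e^{ikx}/(1+k^2)^{m_1/2}$ gives the operator-norm lower bound directly. The step I expect to be the real content, though it is easy, is choosing $\tau$ inside the inflation window rather than at $0$. Starting at $\tau=0$ the dissipation phase cancels the growth exactly, since $\int_0^{t}A_k\le 0$; this is precisely why $C_0=1$ while the uniform-in-$\tau$ supremum diverges.
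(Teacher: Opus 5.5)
Your proposal is correct and follows the paper's proof: you use the same choice $\tau_k = \tfrac{1}{2}k^{-1/4}$, $t_k = k^{-1/4}$ inside the inflation window, the same single-mode lower bound, and the same conclusion that $(1+k^2)^{\frac{m_2-m_1}{2}}\exp\big(\tfrac{1-\sigma}{2}k^{1/4}\big)\to+\infty$ for $\sigma<1$.
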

\begin{proof}[Proof of \Cref{thm:well-posedness-Sobolev}]
We address \eqref{eq:evol-eq} at each frequency $k\in \mathbb Z\setminus\{0\}$ (the $0$-mode remains constant).
We denote by $\phi_{\tau, k} \in \mathcal{C}([\tau, +\infty)) \cap  W^{1,\infty}(\tau, +\infty)$ the unique weak solution of the linear ODE:
\begin{equation}\label{eq:ODE-projected}
		\phi_{\tau,k}'(t) = \sqrt{|k|} A_k(t)\phi_{\tau,k}(t)\quad t \in [\tau, \infty),
        \qquad 
        \qquad 
		\phi_{\tau,k}(\tau) = \phi_{in, k},
\end{equation}
where $\phi_{in, k} \in \mathbb C$ is the $k$-mode of $\phi_{in }$. $\phi_{\tau,k}$ is explicitly determined by
\begin{equation}\label{eq:def-Tk}
	\phi_{\tau,k}(t) = S_k(\tau, t) \phi_{in, k},
	\quad
	\text{with}
	\quad 
	S_k(\tau, t) := 
	\exp 
	\left(
		\sqrt{|k|} \int_\tau^t 
		A_k(\omega) 
		d\omega 
	\right)\in \mathbb R.
\end{equation}
We next consider the time-dependent Fourier series $\phi_\tau(t,x) := \sum_{k \in \mathbb{Z}} \phi_{\tau,k}(t) e^{i k x}$ and show that it converges in $\mathcal{C}([\tau, \infty), H^m(\mathbb{T}))$. We first address $\tau = 0$ and remark that, from the definition of $S_k$ and $A_k$,
\begin{equation*}
\begin{alignedat}{4}
	&t \in \left[0, \frac{1}{2\sqrt[4]{k}}\right) \quad 
	&&\Rightarrow \quad 
	0 \leq S_k(0,t) = 
	\exp 
	\left(
		-		
		t \sqrt{|k|} 
	\right)
	\leq 1,	
	\\
	&t \in \left[\frac{1}{2\sqrt[4]{k}},\frac{1}{\sqrt[4]{k}}\right) \quad 
	&&\Rightarrow \quad 
	0 \leq S_k(0,t) = 
	\exp 
	\left(
		\sqrt{|k|}
		\left( -  \frac{1}{2\sqrt[4]{k}} + t -  \frac{1}{2\sqrt[4]{k}} \right) 
	\right)
	\leq 1,
	\\
	&t \in \left[\frac{1}{\sqrt[4]{k}}, +\infty\right) \quad 
	&&\Rightarrow \quad 
	0 \leq S_k(0,t) = 1.	
\end{alignedat}
\end{equation*}
Thus $0\leq S_k(0, t)\leq 1$ for any $k \in \mathbb Z\setminus\{0 \}$ and  any $t\geq 0$. This implies in particular
\begin{equation*}
	\| \phi_0(t) \|_{H^m(\mathbb T)}^2 = 	
	\sum_{k \in \mathbb Z}(1+ k^2)^\frac{m}{2} \left| S_k(0,t)\phi_{k,in} \right|^2 \leq \| \phi_{in} \|_{H^m(\mathbb T)}^2 < \infty.
\end{equation*}
Since $ |S_k(0,t)\phi_{k,in} | \leq |\phi_{k,in}|$, the function $\phi_0$ belongs to $\mathcal{C}([0, \infty), H^m(\mathbb T))$ thanks to the dominated convergence theorem for series and the continuity of $S_k(0,\cdot)$. We have thus established the constant $C_0 = 1$ in \eqref{eq:cont-wrt-initial-data}. 

\smallskip 
\noindent 
Next, we handle $\tau>0$ 
and split the Fourier series $\phi_\tau $ into two components:
\begin{equation*}
	\phi_\tau (t,x )
    =
    \phi^1_\tau (t,x ) 
    +
    \phi^2_\tau (t,x )
	:= 
	\sum_{|k| \leq \frac{1}{\tau^4}}  S_k(\tau ,t)\phi_{k,in}e^{i k x} 
	+
	\sum_{|k| > \frac{1}{\tau^4}}S_k(\tau ,t)\phi_{k,in}e^{i k x}.
\end{equation*}
The second series $ \phi^2_\tau$ handles frequencies satisfying $1/\sqrt[4]{|k|}<\tau $. Hence, for any  $t \geq \tau $, we obtain the identity $S_k(\tau, t) = \exp( \sqrt{|k|}\int_\tau^t A_k(\omega) d\omega)=  \exp( \sqrt{|k|}\int_\tau^t 0 \,d \omega )= 1$. Therefore $ \phi^2_\tau(t,x)$ is constant in time and satisfies $\| \phi^2_\tau(t,\cdot) \|_{H^m(\mathbb T)} \leq \| \phi_{in} \|_{H^m(\mathbb T)}$. 

\noindent 
The first sum $\phi^1_\tau$ is finite and therefore in $\mathcal{C}([\tau, \infty), H^m(\mathbb{T}))$. Moreover, by definition, $A_k(t) \leq \mathbf{1}_{[0,1/\sqrt[4]{|k|})}(t)$ for any $t \geq 0$. Thus, for any $|k| \leq 1/\tau^{4}$, we gather that
\begin{equation*}
	0 \leq S_k(\tau,t) 
    = \exp\bigg( \sqrt{|k|} \int_\tau^tA_k(\omega) d \omega   \bigg) 
    \leq \exp\left( \sqrt{|k|}  \frac{1}{\sqrt[4]{|k|}} \right) 
    = e^{ \sqrt[4]{|k|}  }
    \leq 
    e^{1/\tau}.
\end{equation*}
We have established therefore that $\phi_\tau \in \mathcal{C}([\tau, \infty), H^m(\mathbb T))$ satisfies  \eqref{eq:cont-wrt-initial-data}. 

\smallskip 
\noindent 
We conclude observing that
$
\partial_t \phi_\tau = [\mathcal{A}(\cdot)](\phi_\tau(\cdot)) \in L^\infty(\tau, \infty; H^{m-1/2}(\mathbb{T}))
$
and that the uniqueness follows from the uniqueness of the ODE \eqref{eq:ODE-projected} for each Fourier mode.
\end{proof}

\begin{proof}[Proof of \Cref{{thm:impostor-identity}}]
\noindent 
Let $\delta>0$ and $\sigma\in[0,1)$.  
We consider a general $k\in\mathbb N$ satisfying 
 $\frac{1}{\sqrt[4]{k}}\leq \delta$. Then, by setting $0 < \tau_k =\tfrac{1}{2\sqrt[4]{k}} < t_k =  \tfrac{1}{\sqrt[4]{k}} \leq \delta $, we obtain 
\begin{align*}
	\sup_{0 \leq \tau  \leq t \leq \delta} 
	\left\|
		e^{-\sigma (t-s) \sqrt{|\partial_x|} }
		S(\tau ,t)
	\right\|_{\mathcal{L}( H^{m_1}(\mathbb T), H^{m_2}(\mathbb T)\big)}
	&\geq 
    e^{-\sigma (t_k-\tau_k) \sqrt{k} } S(\tau_k,t_k)\left( 1+ k^2 \right)^\frac{m_2-m_1}{2} 
    \\
	&\geq 
     \exp \left( -\sigma \frac{\sqrt[4]{k}}{2}\right) 
	S_k\left( \frac{1}{2\sqrt[4]{k}}, \frac{1}{\sqrt[4]{k}} \right) \left( 1+ k^2 \right)^\frac{m_2-m_1}{2}. 
\end{align*}
Next, since $A_k \equiv 1$ on the interval $[\tau_k, t_k] = \big[\frac{1}{2\sqrt[4]{|k|}},\frac{1}{\sqrt[4]{|k|}}\big]$, we obtain
\begin{align*}
	S_k\left( \frac{1}{2\sqrt[4]{k}}, \frac{1}{\sqrt[4]{k}} \right) 
	=
	\exp 
	\left(
		\sqrt{k} \int_{\frac{1}{2\sqrt[4]{k}}}^\frac{1}{\sqrt[4]{k}} 
		A_k(\omega) 
		d\omega 
	\right)
    = 
	\exp 
	\left(
		\frac{1}{2}\sqrt[4]{k}
	\right).
\end{align*}
Summarising, for any $k\in\mathbb N$ with
$\frac{1}{\sqrt[4]{|k|}}\le \delta$, the following lower bound holds true:
\begin{equation*}
	\sup_{0 \leq \tau \leq t \leq \delta} 
	\left\|
		e^{-\sigma (t-\tau) \sqrt{|\partial_x|} }
		S(\tau,t)
	\right\|_{\mathcal{L}( H^{m_1}(\mathbb T), H^{m_2}(\mathbb T)\big)}
	\geq 
	\exp 
	\left(
		\frac{1-\sigma}{2}\sqrt[4]{k}
	\right)
	\left( 1+ k^2 \right)^\frac{m_2-m_1}{2}
\end{equation*}
Since $\sigma\in [0, 1)$, by sending $k\to\infty$, we obtain the relation \eqref{eq:impostor-identity}, which concludes the proof of \Cref{thm:impostor-identity}.
\end{proof}
\section{The matched asymptotic expansion}\label{sec:matched-asymptotics}

We begin with the proof of \Cref{thm:inflating-solutions} and \Cref{thm:non-existence-of-solutions} and devote this section to the construction of the approximate solution $u_k^{\rm fr} = \partial_y \phi_{{\rm inn}, k}+  \partial_y \phi_{{\rm out}, k}$ as described in Part-(a) of \Cref{sec:novelties-some-remarks}. The first building block for the inner layer $\phi_{{\rm inn}, k}$, is the entire function $\Upsilon$ of \Cref{def:uink-into}. For simplicity, we recall its definition:
\begin{equation}\label{def:Upsilon-fct} 
    \Upsilon(\zeta):= 
    \frac{1}{\sqrt{2\pi}}\,\zeta\, e^{-\frac{\zeta^2}{2}} +
    \frac{1}{2}  
    \big(1+\zeta^2\big) 
    \left(1 + \erf\left( \frac{\zeta}{\sqrt{2}} \right) \right),
\end{equation}
where $\erf : \mathbb C \to \mathbb C$ satisfies $\erf(0) = 0$ and $\erf'(\zeta) = \frac{2}{\sqrt{\pi}} e^{-\zeta^2}$. In the next lemma, we establish via $\Upsilon$ a solution of the spectral ODE for the linear Prandtl equations around a quadratic shear flow.
\begin{lemma}\label{lemma:quadratic-unstable-quasi-eigenmode}
    Let $a>0$, $\alpha \in \mathbb R$, $\beta<0$ and $k \in \mathbb N$. Introduce  $\sigma_k\in \mathbb C$ and the change of variable $z : \mathbb C \to \mathbb C$    
    \begin{equation*}
        \sigma_k:= \sqrt{|\beta | k} \, e^{-\frac{\im \pi}{4}} - \im  k \,\alpha 
        \in \mathbb C
        \qquad\qquad 
        z(y) := \sqrt[4]{|\beta|}(y-a)e^{-\frac{\im \pi}{8}} \in \mathbb C.
    \end{equation*}
    Then, the entire function $\phi_k(y) =\Upsilon( \sqrt[4]{k}\, z(y) ) \sqrt{\frac{|\beta|}{k}}
        e^{i\frac{5\pi}{4}}$  satisfies the following ODE in a classical sense:
    \begin{equation}\label{eq:lemma-Upsilon-identity}
         \Big( \sigma_k  + i k \,\Big( \alpha + \beta (y-a)^2 \Big) \Big) \phi_k'(y) - 2 i k\,  \beta (y-a) \phi_k(y) - \phi_k'''(y) = 0,\qquad y \in \mathbb C.
    \end{equation}
\end{lemma}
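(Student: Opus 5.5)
The plan is to reduce \eqref{eq:lemma-Upsilon-identity} by a complex rescaling to a parameter-free third-order ODE for $\Upsilon$, and then to verify that ODE directly from \eqref{def:Upsilon-fct}. Since $\Upsilon$ is entire (it is built from $e^{-\zeta^2/2}$, polynomials and $\erf$), $\phi_k$ is entire. It therefore suffices to check the identity as an identity between entire functions of $y\in\mathbb C$.

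\emph{Step 1: rescaling.} Set $c := \sqrt[4]{k|\beta|}\,e^{-\frac{\im\pi}{8}}$ and $\zeta := \sqrt[4]{k}\,z(y) = c\,(y-a)$, so that $\frac{d}{dy} = c\,\frac{d}{d\zeta}$. Then $c^2 = \sqrt{k|\beta|}\,e^{-\frac{\im\pi}{4}}$ and $c^4 = -\im k|\beta| = \im k\beta$, since $\beta<0$. Consequently
\begin{equation*}
\sigma_k + \im k\alpha = c^2,\qquad \im k\beta\,(y-a)^2 = c^4\frac{\zeta^2}{c^2} = c^2\zeta^2,\qquad 2\im k\beta\,(y-a) = 2c^3\zeta .
\end{equation*}
Writing $\phi_k = C\,\Upsilon(\zeta)$ with the constant $C=\sqrt{|\beta|/k}\,e^{\im\frac{5\pi}{4}}$, the left-hand side of \eqref{eq:lemma-Upsilon-identity} becomes
\begin{equation*}
C\,c^3\Big( (1+\zeta^2)\Upsilon'(\zeta) - 2\zeta\Upsilon(\zeta) - \Upsilon'''(\zeta)\Big).
\end{equation*}
Hence the lemma is equivalent to the universal identity $\Upsilon''' = (1+\zeta^2)\Upsilon' - 2\zeta\Upsilon$ on $\mathbb C$.

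\emph{Step 2: differentiate $\Upsilon$.} Introduce $g(\zeta) := \frac{1}{\sqrt{2\pi}}e^{-\zeta^2/2}$ and $E(\zeta) := 1+\erf(\zeta/\sqrt2)$. From $\erf' = \frac{2}{\sqrt\pi}e^{-\zeta^2}$ one gets $E' = 2g$ and $g' = -\zeta g$, and by definition $\Upsilon = \zeta g + \frac12(1+\zeta^2)E$. Differentiating successively:
\begin{equation*}
\Upsilon' = g - \zeta^2 g + \zeta E + (1+\zeta^2)g = 2g + \zeta E,\qquad \Upsilon'' = -2\zeta g + E + 2\zeta g = E,\qquad \Upsilon''' = 2g .
\end{equation*}

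\emph{Step 3: verify the reduced ODE.} Substituting these expressions gives
\begin{equation*}
(1+\zeta^2)(2g+\zeta E) - 2\zeta\Big(\zeta g + \tfrac12(1+\zeta^2)E\Big) = 2g + 2\zeta^2 g - 2\zeta^2 g + \zeta(1+\zeta^2)E - \zeta(1+\zeta^2)E = 2g,
\end{equation*}
which equals $\Upsilon'''$. This proves \eqref{eq:lemma-Upsilon-identity}. The only delicate point is the bookkeeping of the complex phases in Step 1, namely $c^4 = \im k\beta$ and $c^2 = \sigma_k + \im k\alpha$. This is exactly where the specific choices $e^{-\frac{\im\pi}{8}}$ and $e^{-\frac{\im\pi}{4}}$ in $z$ and $\sigma_k$ enter. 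Once these phases are checked, the rest is a mechanical computation.
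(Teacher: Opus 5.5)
Your proof is correct and follows essentially the same route as the paper. Both arguments verify the universal identity $\Upsilon''' = (1+\zeta^2)\Upsilon' - 2\zeta\Upsilon$ from the explicit derivatives $\Upsilon' = 2g+\zeta E$, $\Upsilon''=E$, $\Upsilon'''=2g$, and transfer it to \eqref{eq:lemma-Upsilon-identity} through the rescaling $\zeta = \sqrt[4]{k|\beta|}\,(y-a)e^{-\frac{\im\pi}{8}}$; you rescale first and factor out $C c^3$, whereas the paper checks the $\zeta$-identity first and then multiplies by $-(-\beta k)^{3/4}e^{-\frac{3\im\pi}{8}}$, which is a purely cosmetic difference.
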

\begin{proof} We compute the first three derivatives of $\Upsilon$:
\begin{equation*}
    \Upsilon'(\zeta) = \sqrt{\frac{2}{\pi}}e^{-\frac{\zeta^2}{2}} + 
    \zeta\bigg(1 +  \erf\left( \frac{\zeta}{\sqrt{2}}\right) \bigg),
    \qquad 
    \Upsilon''(\zeta) = 1+  \erf\Big( \frac{\zeta}{\sqrt{2}}\Big),\qquad 
    \Upsilon'''(\zeta) = \sqrt{\frac{2}{\pi}} e^{-\frac{\zeta^2}{2}}.
\end{equation*}
Hence, we remark that
\begin{align*}
    -\big( 1 + \zeta^2 \big) \Upsilon'(\zeta) +  2 \zeta \Upsilon(\zeta) + \Upsilon'''(\zeta) 
    =
    &-\big( 1 + \zeta^2 \big)\sqrt{\frac{2}{\pi}}  e^{-\frac{\zeta^2}{2}}
    -
    \big( 1 + \zeta^2 \big)\zeta 
    \bigg(1 +  \erf\left( \frac{\zeta}{\sqrt{2}}\right) \bigg)
    +
    \\
    &+\sqrt{\frac{2}{\pi} }\zeta^2 \,e^{-\frac{\zeta^2}{2}} + \zeta (1+\zeta^2) \bigg(1 +  \erf\left( \frac{\zeta}{\sqrt{2}}\right) \bigg) + \sqrt{\frac{2}{\pi}} e^{-\frac{\zeta^2}{2}} 
    = 0.
\end{align*}
By setting the substitution $\zeta = \zeta(y) =  \sqrt[4]{k}\,z(y) =\sqrt[4]{-\beta k}  (y-a)e^{-\frac{i\pi}{8}}$, we recast each $\zeta$-derivative into:
\begin{equation*}
    \frac{d}{d\zeta} = \frac{e^\frac{\rm i\pi}{8}}{\sqrt[4]{-\beta k}} \frac{d}{dy},
    \qquad
    \frac{d^2}{d\zeta^2} = \frac{e^\frac{\rm i\pi}{4}}{\sqrt{-\beta k}} \frac{d^2}{dy^2},
    \qquad
    \frac{d^3}{d\zeta^3} = \frac{e^{\im \frac{\rm 3\pi}{8}}}{(-\beta k)^\frac{3}{4}} \frac{d^3}{dy^3}.
\end{equation*}
Thus, we obtain
\begin{equation*}
    \bigg( 
    - 
    \Big( 1 + \sqrt{-\beta k}(y-a)^2e^{-\frac{\im \pi}{4}} \Big)
    \frac{e^\frac{\rm \im\pi}{8}}{\sqrt[4]{-\beta k}} 
    \frac{d}{dy}
    +
    2\sqrt[4]{-\beta k}(y-a)e^{-\frac{\im\pi}{8}}
    +
    \frac{e^{\im\frac{\rm 3\pi}{8}}}{(-\beta k)^\frac{3}{4}} 
    \frac{d^3}{dy^3}
    \bigg)\Big[ \Upsilon( \sqrt[4]{k}\, z(y) )\Big] = 0.
\end{equation*}
Multiplying this identity by $-(-\beta k)^{3/4} e^{-\frac{3 \im \pi}{8}}$:
\begin{equation*}
    \bigg( 
    \Big( \sqrt{-\beta k} e^{-\frac{\im \pi}{4}}  + (-\beta k) (y-a)^2e^{-\frac{\im \pi}{2}} \Big)
    \frac{d}{dy} 
    -
    (-\beta k)(y-a)e^{-\frac{\im \pi}{2}}
    -
    \frac{d^3}{dy^3}
    \bigg) \Big[ \Upsilon( \sqrt[4]{k}\, z(y) )\Big] = 0.
\end{equation*}
Recalling that $\sigma_k =  \sqrt{-\beta k}  e^{-\frac{\im \pi}{4}} - \im  k \alpha$, we finally obtain
\begin{equation}\label{eq:lemma-Upsilon-identity-proof-sigma}
    \bigg( \Big( \sigma_k + \im k\,\Big( \alpha  + \beta (y-a)^2 \Big) \Big)
    \frac{d}{dy} 
    -
    2 \im k\, \beta (y-a)
   -
    \frac{d^3}{dy^3}\bigg) \Big[ \Upsilon( \sqrt[4]{k}\, z(y) )\Big] = 0.
\end{equation}
Multiplying this identity by $ \sqrt{\frac{|\beta|}{k}} e^{\frac{5\pi}{4}i}$ concludes the proof of the lemma.
\end{proof}
\noindent 
Building upon this $\Upsilon$-profile, we next define the inner and outer layers $\phi_{{\rm inn}, k}$ and $\phi_{{\rm out}, k}$. We invoke the quadratic assumption (H3) of \Cref{Hypotheses-US-us} for $u_{s|t = 0}= U_s$: 
\begin{equation*}
    U_s(y) = U_s(a_0) + \frac{U_s''(a_0)}{2} (y-a_0)^2,\qquad\text{with}\quad U_s''(a_0) <0,\qquad \text{for any }y \in \left[ a_0 - d, a_0 + d\right].
\end{equation*}
Similarly as in \cite{MR2601044}, recalling that $u_s$ satisfies the heat equation \eqref{eq:heat-equation}, we track the evolution of the critical point  $y = a_0$ by solving the nonlinear ODE:
\begin{equation}\label{def:a(t)}
    \partial_y^3 u_s(t,a(t)) \,+\,a'(t) \,\partial_y^2 u_s(t,a(t)) = 0,\qquad 
    a(0) = a_0,\qquad t \in [0,T_{\rm max}].
\end{equation}
The Cauchy--Lipschitz theorem guaranties the existence of a local-in-time smooth solution $a \in \mathcal{C}^\infty([0,T_{\text{max}}])$.
Without loss of generality, we can restrict $T_{\text{max}} > 0$  to be finite and sufficiently small to ensure:
\begin{equation}\label{eq:assumptions-for-Tmax}
    a(t) \in 
    \left[ a_0 - \frac{d}{8},\, a_0 + \frac{d}{8} \right]
    \quad \text{and}
    \quad 
    \partial_y^2 u_s(t,a(t)) \in 
    \Big[2U_s''(a)
    ,\, \frac{U_s''(a_0)}{2}\Big],
    \quad 
    \text{for any}\quad 
    t \in [0, T_{\rm max}]    
    .
\end{equation}
Moreover, to simplify some estimates, we assume from the continuity in time of $\| \partial_y u_s(t, \cdot) \|_{L^2(a_0+\frac{d}{2}, \infty)}$ that
\begin{equation}\label{eq:assumption-for-Tmax-and-partialyus}
    \| \partial_y u_s(t, \cdot )\|_{L^2(a_0+\frac{d}{2}, \infty)}
    \geq 
    \frac{1}{2}
    \| U_s'\|_{L^2(a_0+\frac{d}{2}, \infty)}>0,\qquad \text{for any }t \in [0, T_{\rm max}].
\end{equation}
We recall from \Cref{def:uink-into} that $\chi \in \mathcal{C}^\infty_c(\mathbb{R}_+)$ is a cutoff function satisfying the relations:
\begin{equation*}
       \chi(y) = 1 \quad \text{for any }y \in 
       \left[a_0-\frac{d}{4},\, a_0+\frac{d}{4}\right],
       \qquad 
       \chi(y) = 0 \quad 
       \text{for any }y \in  \mathbb R_+\setminus \left]a_0-\frac{d}{2},\, a_0+\frac{d}{2} \right[.
\end{equation*}
\begin{definition}\label{def:inner-outer-profiles}
    For each frequency $k \in \mathbb{N}$, define  $z \in \mathcal{C}^\infty([0, T_{\rm max}] \times \mathbb R_+ , \mathbb C)$ and $\sigma_k \in \mathcal{C}^\infty([0, T_{\rm max}], \mathbb C)$:
    \begin{equation}\label{eq:z-sigmak-in-def-of-phi-inn-out}
        z(t,y) :=(y-a(t)) e^{-\frac{\im \pi}{8}} 
        \sqrt[4]{\frac{|\partial_y^2u_s(t,a(t))|}{2}} \in \mathbb C,
        \quad
        \quad
        \sigma_k(t):=
        \sqrt{k} \,e^{-\frac{\im \pi}{4}}  \sqrt{\frac{|\partial_y^2u_s(t,a(t))|}{2}} - 
        \im k \, u_s( t,a(t))\in \mathbb C,
    \end{equation}
    We define the inner and outer layers at any $(t,y) \in [0, T_{\rm max}] \times \mathbb R_+$ as
    \begin{equation}\label{eq:phi-inn-out}
    \begin{aligned}
        (i)\quad &\phi_{{\rm inn}, k}(t,y) 
        := 
        \chi(y) 
        \Upsilon( \sqrt[4]{k} \, z(t,y) ) 
        \,
            e^{\im \frac{5\pi}{4}}
            \sqrt{\frac{|\partial_y^2u_s(t,a(t))|}{2k}} 
        \exp \bigg( \int_0^t \sigma_k(\tau) d \tau\bigg)
        ,\\
        (ii)\quad &\phi_{{\rm out}, k}(t,y) 
        :=
        \big(1 - \chi(y)\big)
        H\big(y-a(t)\big)
        \bigg( 
            u_s(t,y)
            +
            \frac{\sigma_k(t)}{\im k}
        \bigg)
        \exp \bigg( \int_0^t \sigma_k(\tau) d \tau\bigg)
        ,
    \end{aligned}    
    \end{equation}
    where $H$ denotes the Heaviside function.
\end{definition}
\begin{remark}
    Under (H1)-(H2) of \Cref{Hypotheses-US-us}, we recall that $u_s \in \mathcal{C}([0, \infty[, H^{m+1}_\lambda(\mathbb R_+))$ with $m\geq 3$. Hence
    \begin{equation}\label{def:phi-inn-out-regularities}
    \begin{aligned}
        &\phi_{{\rm inn}, k} \in 
        \mathcal{C}^\infty_c\big(\,[0, T_{\rm max}] \times ]0, \infty[\,\,\big),\quad 
        \phi_{\rm out, k}\in 
        \mathcal{C}^\infty(\,]0, T_{\rm max}] \times [0, \infty[\,)\cap \mathcal{C}([0, T_{\rm max}], H_\lambda^{m+1}(\mathbb R_+) \oplus \mathbb C),
    \end{aligned}
    \end{equation}
    Furthermore, the associated  $u^{\rm fr}_k := \partial_y 
    ( \phi_{{\rm inn}, k} +   \phi_{{\rm out}, k}  ) \in  \mathcal{C}^\infty(\,]0, T_{\rm max}] \times [0, \infty[\,)$ belongs to
\begin{equation}\label{eq:ukfr-vkfr-def-and-spaces}
	u^{\rm fr}_k 
    \in 
    \mathcal{C}([0, T_{\rm max}], H_\lambda^m(\mathbb R_+))
    \cap L^2(0, T_{\rm max}; H^{m+1}_\lambda(\mathbb R_+))
        \cap W^{1,2}(0, T_{\rm max}; H^{m-1}_\lambda(\mathbb R_+)).
\end{equation}
Notably, $(1-\chi(y))$ is zero around $y = a(t)$, eliminating any discontinuities introduced by the Heaviside function.  
The boundary conditions $u_k^{\rm fr}(t,0)  = \phi_{{\rm inn}, k}(t, 0)= \phi_{{\rm out}, k}(t, 0)= 0$, for any $t \in [0, T_{\rm max}]$, are also satisfied.
\end{remark}

\smallskip 
\noindent 
The next proposition determines the forcing term generated by $u_k^{\text{fr}}$ when inserted into System \eqref{eq:lin-Prandtl-projected-k}.
For the sake of abbreviation, we introduce the short notation $\alpha,\, \beta,\,\gamma \in \mathcal{C}^\infty([0, T_{\rm max}])$ with 
\begin{equation}\label{eq:notation-alpha-beta-gamma}
    \alpha(t):= u_s(t,a(t)),\qquad 
    \beta(t) := \frac{1}{2}\partial_y^2 u_s(t,a(t)),\qquad 
    \gamma(t) :=
    \sqrt[4]{-\beta(t)} = \sqrt[4]{\frac{|\partial_y^2 u_s(t,a(t))|}{2}}>0,
\end{equation}
which implies $z(t,y) = \gamma(t) (y-a(t))e^{-\frac{i\pi}{8}}$, thanks to \eqref{eq:z-sigmak-in-def-of-phi-inn-out}. 
\begin{prop}\label{prop:remainder-exact-form}
    For any frequency $k \in \mathbb N$, 
    $u_k^{\rm fr}$ 
    satisfies the following system in a classical sense:
    \begin{align*}
    \left\{
    \begin{alignedat}{4}
        &\Big(
            \partial_t + \im k\, u_s(t,y)  - 
            \partial_y^2 
        \Big) u_k^{\rm fr}(t,y)  
        - 
        \im k\, \partial_y u_s(t,y) \int_0^y u_k^{\rm fr}(t, \omega)d \omega = 
        f_k(t,y)
        e^{ \int_0^t \sigma_k(\tau) d \tau} ,\qquad 
        &&
        (t,y) \in 
        \,]0, T_{\rm max}[ \times \mathbb ]0, \infty[,
        \\
        &\,u_k^{\rm fr}(t,0) = \lim_{y\to +\infty} u_k^{\rm fr}(t,y) = 0
        && \hspace{0.58cm}
        t \in [0, T_{\rm max}],
        \\
        &\,u_k^{\rm fr}(0,y) = \partial_y \phi_{\rm inn, k}(0,y)
        +
        \partial_y \phi_{\rm out, k}(0,y)
        && \hspace{0.58cm}
        y \in  ]0, \infty[,
    \end{alignedat}
    \right.
    \end{align*}
where $f_k \in  \mathcal{C}^\infty_c([0, T_{\rm max}] \times \mathbb R_+, \mathbb C) $ is the sum 
\begin{equation*}
    f_k(t,y) := \sum_{j = 1}^7\mathcal{R}^1_{j,k}(t,y)  +
            \sum_{j = 1}^3\mathcal{R}^2_{j,k}(t,y) ,\qquad (t,y) \in [0, T_{\rm max}] \times [0, \infty[
\end{equation*} 
of two family of remainders: 
\begin{itemize}[leftmargin=0.7cm]
    \item[(a)] The remainders 
\begin{equation}\label{eq:first-family-remainders}
\begin{aligned}
    \mathcal{R}^1_{1,k}(t,y)
    &=
    \bigg[\,  
        u_s(t,y)  - 
        \alpha(t) -
        \beta(t) (y-a(t))^2
    \,\bigg] 
    \chi (y)
    \Upsilon'(\sqrt[4]{k}\,z(t,y) ) 
    \gamma(t)^3
   e^{\im \frac{13\pi}{8}} k^\frac{3}{4}
    ,
    \\
    \mathcal{R}^1_{2,k}(t,y)
    &=
    \bigg[  
        u_s(t,y)  - 
        \alpha(t) -
        \beta(t) (y-a(t))^2
    \bigg]
    \Big(
        \chi'''(y) -
        \Big(
            \sigma_k(t)   +  \im  k \, u_s(t,y)   
        \Big)
        \chi'(y)
    \Big)
    H(y-a(t)),
    \\
    \mathcal{R}^1_{3,k}(t,y)
    &=
    \bigg[ 
        \partial_y u_s(t,y)
        - 
        2\beta(t)(y-a(t))
    \bigg]
    \chi(y) 
    \gamma(t)^2 
    \Upsilon( \sqrt[4]{k} \, z(t,y))
     e^{\im \frac{3\pi}{4}}
     \sqrt{k},
    \\
    \mathcal{R}^1_{4,k}(t,y)
    &=
    \Big[
       \partial_y u_s(t,y)
       - 
       2\beta(t)(y-a(t))
    \Big]
    3
    \chi''(y)
    H(y-a(t)),
    \\
    \mathcal{R}^1_{5,k}(t,y)
    &=
    \bigg[\partial_y^2 u_s(t,y) -2\beta(t) \bigg]
    2\chi'(y)
    H(y-a(t)),
    \\
    \mathcal{R}^1_{6,k}(t,y)
    &=\gamma'(t) 
    \bigg(
        \chi'(y) \frac{2\gamma(t)e^{\im \frac{5\pi}{4}}}{\sqrt{k}}
        \Big( 
            \Upsilon(\sqrt[4]{k} \,z(t,y))
            -
           H(y-a(t))
        \Big)
        +
        \chi(y) \frac{3\gamma(t)^2e^{\im \frac{9\pi}{8}}}{\sqrt[4]{k}} 
       \Upsilon'(\sqrt[4]{k} \,z(t,y)) 
    \bigg),
    \\
    \mathcal{R}^1_{7,k}(t,y)
    &=
    \Big[ \gamma'(t)(y-a(t)) - \gamma(t) a'(t) \Big]
    \bigg(
        \chi'(y) \frac{e^{\im \frac{9\pi}{8}}\gamma(t)^2}{\sqrt[4]{k}} 
        \Upsilon'(\sqrt[4]{k} \,z(t,y))
        -
        \chi(y) \gamma(t)^3  
       \Upsilon''(\sqrt[4]{k} \,z(t,y))
    \bigg).
\end{aligned}
\end{equation}
\item[(b)] The remainders 
\begin{equation}\label{eq:second-family-remainders}
\begin{aligned}
    \mathcal{R}^2_{1,k}(t,y) 
    &=
    \bigg[
    \,
    \Upsilon(\sqrt[4]{k}\,z(t,y)) 
    -
    \big(  1 + \sqrt{k}\, z(t,y)^2\, \big)
    H(y-a(t))\,
    \bigg]
    \Big(
        \big(   \sigma_k(t)  +  \im k\, u_s(t,y) \big)\chi'(y) 
    - \chi'''(y) 
    \Big)
    \frac{e^{\im \frac{5\pi}{4}}\gamma(t)^2 }{\sqrt{k}}
    ,
    \\
    \mathcal{R}^2_{2,k}(t,y) 
    &=
    \bigg[\, 
        \Upsilon'(\sqrt[4]{k}\,z(t,y)) - 2 \sqrt[4]{k} \,z(t,y) H(y-a(t))
    \,\bigg]
    \big(
    -
    3 \chi''(y)
    \big)
    \frac{e^{\im \frac{9\pi}{8}}\gamma(t)^3 }{\sqrt[4]{k}} 
    ,
    \\
    \mathcal{R}^2_{3,k} (t,y)
    &=
    \bigg[\, 
        \Upsilon''(\sqrt[4]{k}\,z(t,y))-2 H(y-a(t)) 
    \,\bigg]
    \big( -3 \chi'(y) \big)\beta(t) .
\end{aligned}
\end{equation}
\end{itemize}
\end{prop}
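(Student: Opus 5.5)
The plan is a direct computation in the stream-function formulation. Write $\Phi_k := \phi_{{\rm inn},k}+\phi_{{\rm out},k}$, so that $u_k^{\rm fr}=\partial_y\Phi_k$ and $\int_0^y u_k^{\rm fr} = \Phi_k(t,y)$; the latter holds because $\Phi_k(t,0)=0$, since $\chi$ vanishes near $y=0$ and $H(-a(t))=0$. The left-hand side then becomes $\mathcal{L}_k\Phi_k := (\partial_t + \im k u_s - \partial_y^2)\partial_y\Phi_k - \im k\,\partial_y u_s\,\Phi_k$. Set $E(t):=e^{\int_0^t\sigma_k}$. We compute $\mathcal{L}_k\phi_{{\rm out},k}$ and $\mathcal{L}_k\phi_{{\rm inn},k}$ separately, factor out $E(t)$, and identify the result with the listed remainders.

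\textbf{Outer layer.} Since $1-\chi$ vanishes near $y=a(t)$, we can replace $H(y-a(t))$ by a function that is locally constant in $y$ and independent of $t$ on the support of $1-\chi$; the bound $|a(t)-a_0|\le d/8$ from \eqref{eq:assumptions-for-Tmax} guarantees this. Writing $\psi:=u_s+\sigma_k/(\im k)$, the identity in the introduction gives $\mathcal{L}_k(\psi E)=0$, using $\partial_t\partial_y u_s=\partial_y^3 u_s$. Hence $\mathcal{L}_k((1-\chi)H\psi E)$ reduces to the commutator terms in which derivatives fall on $\chi$:
\[
-H E\Big[(\sigma_k+\im k u_s)\chi'\psi - \chi'''\psi - 3\chi''\partial_y\psi - 3\chi'\partial_y^2\psi\Big].
\]
We must check that the $\partial_t(\sigma_k/\im k)$ terms cancel. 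They do: $\partial_t(\psi E) = (\partial_t u_s + \sigma_k'/(\im k) + \sigma_k\psi)E$, and the $\sigma_k'$ contribution appears only with $\partial_y$, so it disappears after differentiation in $y$.

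\textbf{Inner layer.} Write $\phi_{{\rm inn},k}=\chi\,G\,E$ with $G(t,y)=\Upsilon(\sqrt[4]{k}z)\,e^{5\im\pi/4}\gamma^2/\sqrt{k}$; note $\sqrt{|\partial_y^2u_s|/(2k)}=\gamma^2/\sqrt k$. Proceed in four steps.
\begin{enumerate}
\item Apply \Cref{lemma:quadratic-unstable-quasi-eigenmode} with the frozen parameters $a=a(t)$, $\alpha=\alpha(t)$, $\beta$ equal to the paper's $\beta(t)$. The normalisations match: the lemma's $|\beta|$ corresponds to $\gamma^4$, and $\sigma_k(t)$ is exactly the lemma's eigenvalue. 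This gives $(\sigma_k + \im k(\alpha+\beta(y-a)^2))\partial_y G - 2\im k\beta(y-a)G - \partial_y^3G=0$ for each fixed $t$.
\item Since $\partial_t(GE)=(\partial_tG+\sigma_k G)E$, the full operator applied to $\chi G E$ equals $E$ times four groups of terms.
\item The first group is the differences $u_s-\alpha-\beta(y-a)^2$ and $\partial_yu_s-2\beta(y-a)$ multiplied by $\chi\partial_yG$ and $\chi G$. Using $\partial_y\Upsilon(\sqrt[4]{k}z)=\sqrt[4]{k}\gamma e^{-\im\pi/8}\Upsilon'$, these produce $\mathcal{R}^1_{1,k}$ and $\mathcal{R}^1_{3,k}$.
\item The second group is $\partial_t\partial_yG$, computed by the chain rule in $\gamma(t)$ and $a(t)$; it gives $\mathcal{R}^1_{6,k}$ and $\mathcal{R}^1_{7,k}$. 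The remaining two groups are the commutator terms with $\chi',\chi'',\chi'''$ acting on $G,\partial_yG,\partial_y^2G$.
\end{enumerate}

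\textbf{Matching.} In the $\chi$-derivative terms, split each inner factor as follows:
\[
\Upsilon = \big(\Upsilon-(1+\sqrt k z^2)H\big)+(1+\sqrt kz^2)H,\quad \Upsilon'=\big(\Upsilon'-2\sqrt[4]{k}zH\big)+2\sqrt[4]{k}zH,\quad \Upsilon''=(\Upsilon''-2H)+2H.
\]
The bracketed differences, paired with $\chi$-derivatives, give $\mathcal{R}^2_{1,k}$, $\mathcal{R}^2_{2,k}$ and $\mathcal{R}^2_{3,k}$. The leading parts should reproduce the quadratic Taylor polynomial of $\psi$ times $H$. The key checks are $e^{5\im\pi/4}\gamma^2 k^{-1/2}(1+\sqrt k z^2) = \sigma_k/(\im k)+\alpha+\beta(y-a)^2$ and $z^2=\gamma^2(y-a)^2e^{-\im\pi/4}$, which makes $\gamma^4\beta$-type constants collapse. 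These leading parts then combine with the outer commutator terms, leaving only $\psi$ minus its Taylor polynomial, namely $u_s-\alpha-\beta(y-a)^2$, its $y$-derivative, and $\partial_y^2u_s-2\beta$. Those are exactly $\mathcal{R}^1_{2,k}$, $\mathcal{R}^1_{4,k}$ and $\mathcal{R}^1_{5,k}$.

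\textbf{Remaining checks and main obstacle.} The boundary conditions follow from $\Phi_k(t,0)=0$ and the exponential decay of $u_s$ at infinity. The initial condition is immediate. The main obstacle is the bookkeeping of phase factors (powers of $e^{\im\pi/8}$) and of the time-derivative terms in $\mathcal{R}^1_{6,k}$ and $\mathcal{R}^1_{7,k}$. In particular, the $-H$ inside $\mathcal{R}^1_{6,k}$ arises from $\partial_t$ of the prefactor $\gamma^2$ on the outer Taylor part; this must be checked against the $\sigma_k'$ cancellation. I would verify these phases by comparing coefficients term by term.
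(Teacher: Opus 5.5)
Your overall route is the paper's. You freeze $(a,\alpha,\beta)$ at time $t$ in the quadratic-profile lemma, let the Taylor deviations produce $\mathcal{R}^1_{1,k}$ and $\mathcal{R}^1_{3,k}$, split $\Upsilon,\Upsilon',\Upsilon''$ against $H$ in the $\chi$-derivative terms to obtain $\mathcal{R}^2_{j,k}$, and cancel the leading parts against the outer commutator terms. The gap is in that outer commutator.

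You claim the $\sigma_k'$ contribution ``appears only with $\partial_y$''. This is true for $\psi E$, where $\partial_y$ removes $\sigma_k/(\im k)$ before $\partial_t$ acts. It is false for $(1-\chi)H\psi E$: the $\partial_y$ in $(\partial_t+\im k u_s-\partial_y^2)\partial_y$ can fall on $1-\chi$, leaving $-\chi' H\psi E$, and $\partial_t$ then acts on that. The correct identity is
\[
\mathcal{L}_k\phi_{{\rm out},k}=-HE\Big[\chi'\,\partial_t\psi+\big(\sigma_k+\im k u_s\big)\chi'\psi-\chi'''\psi-3\chi''\partial_y u_s-3\chi'\partial_y^2 u_s\Big].
\]
You have therefore dropped the whole term $-HE\,\chi'\partial_t\psi$, which includes $\partial_t u_s$ and not only $\sigma_k'$.

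This term is where the second, less obvious use of the heat equation enters. Since $\partial_y u_s(t,a(t))=0$, one has $\alpha'(t)=\partial_t u_s(t,a(t))=\partial_y^2u_s(t,a(t))=2\beta(t)$. Hence $\partial_t\psi=\big(\partial_y^2u_s-2\beta\big)+2e^{\im\frac{5\pi}{4}}\gamma\gamma'/\sqrt k$, and each piece is needed:
\begin{itemize}
\item The first piece turns the coefficient $3$ of $\chi'(\partial_y^2u_s-2\beta)H$ that your formula produces into the coefficient $2$ in $\mathcal{R}^1_{5,k}$.
\item The second piece cancels the term $2e^{\im\frac{5\pi}{4}}\gamma\gamma' k^{-1/2}\chi' H$ that comes from $\partial_t(\gamma^2)$ in the inner layer. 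This cancellation is what produces the factor $\Upsilon-H$ in $\mathcal{R}^1_{6,k}$. Without it you get $\Upsilon$ there, so the identity you verify is not the stated one.
\end{itemize}
Your closing remark guesses the right origin of this $-H$, but it contradicts your cancellation claim: the $-H$ comes precisely from the $\sigma_k'$ term you said vanishes. Conceptually, without rewriting $\partial_t u_s-\alpha'$ as $\partial_y^2u_s(t,y)-\partial_y^2u_s(t,a(t))$, the term $-\chi' H\,\partial_t u_s$ would be an $O(1)$ forcing rather than an exponentially small Taylor-deviation remainder. Once this term is restored, your bookkeeping coincides with the paper's.

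A minor point on the boundary condition: $u_k^{\rm fr}(t,0)=0$ requires $\Phi_k\equiv0$ on a neighbourhood of $y=0$, and $\Phi_k(t,0)=0$ alone is not enough. It does hold, because $\chi$ and $H(\cdot-a(t))$ both vanish on $[0,a_0-\frac d2]$.
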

\noindent 
The leading terms are isolated in the first (squared) brackets. We note that each remainder depends on the cutoff function $\chi$ or its derivatives, supported in $[0, T_{\text{max}}] \times [a_0 - \tfrac{d}{2}, a_0 + \tfrac{d}{2}]$. Furthermore, since $u_s$ is smooth in this region ($U_s$ is quadratic), it follows that $f \in \mathcal{C}_c^\infty([0, T_{\text{max}}] \times \,]0, \infty[)$.

\smallskip 
\noindent 
The family $\mathcal{R}_{1,k}^1, \dots, \mathcal{R}_{7,k}^1$ depends mainly on the deviation of $u_s$ from its second-order Taylor expansion centered at $y = a(t)$. As discussed in \Cref{sec:novelties-some-remarks}, these are therefore the $\mathcal{O}(e^{-c_2/t})$-remainders.

\smallskip
\noindent 
The second family $\mathcal{R}_{1,2}^2$, $\mathcal{R}_{1,2}^2$ and $\mathcal{R}_{1,2}^3$ has leading-order terms governed by the interaction between the function $\Upsilon$ and the Heaviside function $H$. Eventually, these yield  estimates of the form
\begin{equation*}
	\bigg| 
        \,
        \erf\bigg( \frac{\sqrt[4]{k}\,z(t,y)}{\sqrt{2}}\bigg) - H(y-a(t)) 
        \,
    \bigg| 
    \lesssim    
    e^{    -\sqrt{k} \,(y-a(t))^2  \big(\frac{|U_s''(a_0)|}{2} \big)^{1/2}}
\end{equation*}
Since $\chi' \equiv 0$ on the interval $[a_0 - \tfrac{d}{4}, a_0 + \tfrac{d}{4}]$ and given our assumption that $a(t) \in [a_0 - \tfrac{d}{8}, a_0 + \tfrac{d}{8}]$, we conclude that these are the $\mathcal{O}(e^{-c_3\sqrt{k}})$-remainders anticipated in \Cref{sec:novelties-some-remarks}. We formalise these estimates in \Cref{prop:estimates-remainders-and-uf}.
\begin{proof}[Proof of \Cref{prop:remainder-exact-form}]
We begin with the following identity form Lemma~\ref{lemma:quadratic-unstable-quasi-eigenmode} (cf.~\eqref{eq:lemma-Upsilon-identity-proof-sigma}):
\begin{equation*}
    \Bigg[ 
    \bigg(
    \sigma_k(t)  +  \im k \Big( \alpha(t) + \beta(t) (y-a(t))^2  \Big) 
    \bigg) 
    \partial_y 
    -2 i k \,\beta(t) (y-a(t))
    -
    \partial_y^3
    \Bigg] 
    \Big( \Upsilon(\sqrt[4]{k}\,z(t,y)) \Big) = 0,
\end{equation*}
for any $(t,y) \in [0, T_{\rm max}] \times \mathbb R_+$. 
Next, we replace the quadratic terms with the shear flow $u_s$ and move any remaining terms to the right-hand side:
\begin{equation}\label{eq:Upsilon-eq-in-us-with-remainders}
\begin{aligned}
    \bigg[ 
    \Big(
    \sigma_k(t)  
    +  
    \im k\, u_s(t,y) 
    \Big) 
    \partial_y 
    -\im k \,\partial_y u_s(t,y) 
    &-
    \partial_y^3
    \bigg] 
    \Big( \Upsilon(\sqrt[4]{k}\,z(t,y)) \Big) = 
    \\
     =
    \im k \, \Big( u_s(t,y) - \alpha(t) -\beta(t)(y&-a(t))^2 \Big) \Upsilon'(\sqrt[4]{k}\,z(t,y))\sqrt[4]{k} \,\gamma(t) e^{-\frac{\im \pi}{8}} 
    +\\
    &-
    \im k \, \Big( \partial_y u_s(t,y) - 2\beta(t)(y-a(t)) \Big) 
    \Upsilon(\sqrt[4]{k}\,z(t,y)),
\end{aligned}
\end{equation}
using $z(t,y) = \gamma(t)(y-a(t))e^{-\frac{\im \pi}{8}}$, which implies $\partial_y z(t,y) = \gamma(t) e^{-\frac{\im \pi}{8}}$ 
Recall from \eqref{eq:phi-inn-out} that the inner layer is
\begin{equation*}
    \phi_{\text{inn}, k}(t,y) = 
    \chi(y)   \Upsilon(\sqrt[4]{k}\,z(t,y)) 
    \frac{e^{\im \frac{5\pi}{4}} \gamma(t)^2}{\sqrt{k}} 
    \exp \bigg( \int_0^t \sigma_k(\tau) d \tau\bigg),
    \qquad 
    (t,y) \in [0, T_{\rm max}] \times \mathbb R_+.
\end{equation*}
Thus, multiplying the identity \eqref{eq:Upsilon-eq-in-us-with-remainders}  by $\chi(y)  \frac{e^{\im \frac{5\pi}{4}}\gamma(t)^2}{\sqrt{k}} e^{\int_0^t \sigma_k(\tau)d\tau}$ and applying the product rule, we obtain:
\begin{equation*}
\begin{aligned}
    \bigg[ 
    \Big(
    \sigma_k(t)  
    +  
    \im k\, u_s(t,y) 
    &\Big) 
    \partial_y 
    -\im k \,\partial_y u_s(t,y) 
    -
    \partial_y^3
    \bigg] \phi_{\text{inn}, k}(t,y)
    = 
    \\
    =
    e^{\int_0^t \sigma_k(\tau)d\tau }
    \Bigg\{
    &
    \underbrace{
    \bigg( 
    \Big(
    \sigma_k(t)  
    +  
    \im  k\, u_s(t,y) 
    \Big) 
    \chi'(y)
    -
    \chi'''(y)
    \bigg) 
    \frac{e^{\im \frac{5\pi}{4}} \gamma(t)^2}{\sqrt{k}}  
    \Upsilon(\sqrt[4]{k}\,z(t,y)) 
    }_{=:\mathcal{I}_1}
    \,+
    \\
    &\hspace{1cm}
    \underbrace{
    - 3 \chi''(y) \frac{e^{\im \frac{9\pi}{8}}\gamma(t)^3}{\sqrt[4]{k}}\Upsilon'(\sqrt[4]{k}\,z(t,y)) 
    - 
    3 
    \chi'(y) 
    \beta(t)
    \Upsilon''(\sqrt[4]{k}\,z(t,y))
    }_{=:\mathcal{I}_2}\,+
    \\
    &\hspace{2cm}
    +
    \underbrace{
    \im  k \, \Big( u_s(t,y) - \alpha(t) -\beta(t)(y-a(t))^2 \Big) 
    \chi(y)
     \frac{e^{\im \frac{9\pi}{8}}\gamma(t)^3}{\sqrt[4]{k}}
    \Upsilon'(\sqrt[4]{k}\,z(t,y))
     }_{= \mathcal{R}_{1,k}^1(t,y)}\,+\\
    &\hspace{3cm}
    \underbrace{-
    \im  k \, \Big( \partial_y u_s(t,y) - 2\beta(t)(y-a(t)) \Big) 
    \chi(y)  \frac{e^{\im \frac{5\pi}{4}}\gamma(t)^2}{\sqrt{k}}
    \Upsilon(\sqrt[4]{k}\,z(t,y))
     }_{= \mathcal{R}_{3,k}^1(t,y)}
    \Bigg\}.
\end{aligned}
\end{equation*}
In the third line with $\mathcal{I}_2$, we used $e^{\im \frac{5\pi}{4}}\gamma(t)^4e^{-\frac{\im \pi}{4}} = - \gamma(t)^4 = \beta(t)$. Additionally, we identified the remainder terms $\mathcal{R}_{1,k}^1(t,y) $ and $\mathcal{R}_{3,k}^1(t,y) $ in the last two lines, thanks to the identity $ \im  k \frac{e^{\im \frac{9\pi}{8}}}{\sqrt[4]{k}} =e^{\im \frac{13\pi}{8}} k^\frac{3}{4}$, as well as $- \im k  \frac{e^{\im \frac{5\pi}{4}}}{\sqrt{k}} = e^{\im \frac{3\pi}{4}}\sqrt{k}$. Using further the notation introduced in \eqref{eq:first-family-remainders} and \eqref{eq:second-family-remainders}, we decompose $\mathcal{I}_1$ and $\mathcal{I}_2$ as:
\begin{align*}
    \mathcal{I}_1 
    &=    
    \mathcal{R}_{1, k}^2(t,y) 
    +
    \bigg( 
    \Big(
    \sigma_k(t)  
    +  
    \im k\, u_s(t,y) 
    \Big) 
    \chi'(y)
    -
    \chi'''(y)
    \bigg)
    \bigg( 
            \frac{  \gamma(t)^2 e^{\im\frac{5\pi}{4}} }{\sqrt{k}}
            +
            \beta(t)(y-a(t))^2
    \bigg)
    H(y-a(t)),
    \\
    \mathcal{I}_2 
    &=    
    \mathcal{R}_{2, k}^2(t,y) 
    +
    \mathcal{R}_{3, k}^2(t,y) 
    - 6\chi'(y) \beta(t)H(y-a(t))
    - 6\chi''(y) \beta(t) (y-a(t)) H(y-a(t))
   ,
\end{align*}
where we have also used the fact that $\gamma(t)^4 = - \beta(t)$ and $z(t,y) = \gamma(t)(y-a(t))e^{-\frac{i\pi}{8}}$, so that
\begin{equation*}
\begin{aligned}
	\big( 1+ \sqrt{k} \, z(t,y)^2 \big) 
	\frac{e^{i \frac{5 \pi}{4}}\gamma(t)^2}{\sqrt{k}}H(y-a(t))
	&= 
	\bigg(
	\frac{e^{i \frac{5 \pi}{4}}\gamma(t)^2}{\sqrt{k}}
	+ 
	\frac{e^{i \frac{5 \pi}{4}}\gamma(t)^2}{\sqrt{k}}
	\sqrt{k} \, e^{-\frac{i\pi}{4}  }\gamma(t)^2 (y-a(t))^2
	\bigg) H(y-a(t))\\
	&=
	\bigg(
	\frac{e^{i \frac{5 \pi}{4}}\gamma(t)^2}{\sqrt{k}}
	+
	\beta(t) (y-a(t))^2
	\bigg) H(y-a(t))
\end{aligned}
\end{equation*}
and similarly
\begin{align*}
	2 \sqrt[4]{k} \,z(t,y)  H(y-a(t)) \frac{e^{i \frac{9\pi}{8}}\gamma(t)^3}{\sqrt[4]{k}} 
	=2 \gamma(t)^4 (y-a(t)) e^{-\frac{i \pi}{8}}e^{i \frac{9\pi}{8}}H(y-a(t)).
\end{align*}
This yields the identity:
\begin{equation*}
\begin{aligned}
    \bigg[ 
    \Big(
    \sigma_k(t)  
    +  
    i k\, u_s(t,y) 
    \Big) 
    \partial_y 
    -i k \,\partial_y u_s(t,y) 
    -
    \partial_y^3
    \bigg] \phi_{\text{inn}, k}(t,y)
    = 
    e^{\int_0^t \sigma_k(\tau)d\tau }
    \Bigg\{
    \mathcal{R}_{1, k}^1(t,y) +
    \mathcal{R}_{3, k}^1(t,y) +
    \mathcal{R}_{1, k}^2(t,y) + 
    \\
    +
    \mathcal{R}_{2, k}^2(t,y) + 
    \mathcal{R}_{3, k}^2(t,y) 
    \Bigg\}
    +
    \Bigg\{
    \bigg( 
    \Big(
    \sigma_k(t)  
    +  
    i k\, u_s(t,y) 
    \Big) 
    \chi'(y)
    -
    \chi'''(y)
    \bigg)
    \bigg( 
            \frac{  \gamma(t)^2 e^{i\frac{5\pi}{4}} }{\sqrt{k}}
            +
            \beta(t)(y-a(t))^2
    \bigg)
   +\\
    - 6\chi''(y) \beta(t) (y-a(t)) 
     - 6\chi'(y) \beta(t)
    \Bigg\}
    H(y-a(t))e^{\int_0^t \sigma_k(\tau)d\tau }.
\end{aligned}
\end{equation*}
Finally, we replace the multiplication by $\sigma_k(t)$ on the left-hand side 
with the time derivative and move any additional term to the right-hand side. Observing that
\begin{align*}
    \partial_t  \partial_y\phi_{\text{inn}, k}(t,y)
    &-
    \sigma_k(t) \partial_y\phi_{\text{inn}, k}(t,y) 
    =
    \partial_t \partial_y 
    \bigg( 
        \chi(y)   \Upsilon(\sqrt[4]{k}\,z(t,y)) 
        \frac{e^{\im\frac{5\pi}{4}} \gamma(t)^2}{\sqrt{k}} 
    \bigg)  e^{\int_0^t \sigma_k(\tau)d\tau }
    \\
    &=
    \partial_t  
    \bigg( 
        \chi'(y)   \Upsilon(\sqrt[4]{k}\,z(t,y)) 
        \frac{e^{\im \frac{5\pi}{4}} \gamma(t)^2}{\sqrt{k}} 
        +
        \chi(y)   
        \Upsilon'(\sqrt[4]{k}\,z(t,y)) 
        \frac{e^{\im \frac{9\pi}{8}} \gamma(t)^3}{\sqrt[4]{k}} 
    \bigg)  
    e^{\int_0^t \sigma_k(\tau)d\tau }
    \\
    &=
       \gamma'(t)
        \bigg(
        \chi'(y) \frac{e^{\im \frac{5\pi}{4}} 2\gamma(t)}{\sqrt{k}}   \Upsilon(\sqrt[4]{k}\,z(t,y)) 
        +
        \chi(y)   
        \Upsilon'(\sqrt[4]{k}\,z(t,y)) 
        \frac{e^{\im \frac{9\pi}{8}} 3\gamma(t)^2}{\sqrt[4]{k}} 
    \bigg)  
    e^{\int_0^t \sigma_k(\tau)d\tau }
    +\\
    &\hspace{1cm}
    +
    \partial_t z(t,y) e^{\frac{\im \pi}{8}} 
    \bigg( 
        \chi'(y) 
        \frac{e^{\im \frac{9\pi}{8}} \gamma(t)^2}{\sqrt[4]{k}} 
        \Upsilon'(\sqrt[4]{k}\,z(t,y))
        -
        \chi(y) 
        \gamma(t)^3
        \Upsilon''(\sqrt[4]{k}\,z(t,y)) 
    \bigg)
    e^{\int_0^t \sigma_k(\tau)d\tau } 
    \\
    &=
    \bigg( 
       \mathcal{R}_{6, k}^1(t,y)
       +
       2\gamma'(t)\frac{e^{\im \frac{5\pi}{4}}\gamma(t) }{\sqrt{k}}H(y-a(t))
       +
       \mathcal{R}_{7, k}^1(t,y) 
    \bigg)  
    e^{\int_0^t \sigma_k(\tau)d\tau },
\end{align*}
we gather finally
\begin{equation}\label{eq:proof-eq-of-phiinn}
\begin{aligned}
    \bigg[ 
    \Big(
    \partial_t   
    +  
    \im k\, u_s  
    \Big) 
    \partial_y 
    -\im k \,\partial_y u_s   
    -
    \partial_y^3
    \bigg] \phi_{\text{inn}, k}(t,y)
    = 
    e^{\int_0^t \sigma_k(\tau)d\tau }
    \Bigg\{
    \mathcal{R}_{1, k}^1(t,y)+ 
    \mathcal{R}_{3, k}^1(t,y)+
    \mathcal{R}_{6, k}^1(t,y)+
    \mathcal{R}_{7, k}^1(t,y)+\\
    +
    \mathcal{R}_{1, k}^2(t,y)+
    \mathcal{R}_{2, k}^2(t,y)+
    \mathcal{R}_{3, k}^2(t,y)
    \bigg\} 
    +
    F(t,y)
    e^{\int_0^t \sigma_k(\tau)d\tau },
\end{aligned}
\end{equation}
where the term  $F$ will cancel out with an analogous contribution of the outer profile and it is given by:
\begin{equation}\label{eq:F-proof-prop}
\begin{aligned}
    F(t,y) 
    :=
    \Bigg\{
    \bigg( 
            \Big(  
                \sigma_k(t)  +  \im k\, u_s(t,y) 
            \Big) 
            \chi'(y)
            &-
            \chi'''(y)
        \bigg)
        \bigg( 
         \frac{  \gamma(t)^2 e^{\im \frac{5\pi}{4}} }{\sqrt{k}}
         +
          \beta(t)(y-a(t))^2
        \bigg)
        \,+\\
        & - 6\chi''(y) \beta(t) (y-a(t))
        - 6\chi'(y) \beta(t)
        +
       2\gamma'(t)\frac{e^{\im \frac{5\pi}{4}}\gamma(t) }{\sqrt{k}} 
    \Bigg\}
    H(y-a(t)).
\end{aligned}
\end{equation}
We now focus on the outer layer $\phi_{{\rm out}, k}$. Our starting point is the identity
\begin{equation}\label{eq:identity-for-exact-sol-large-y-prop}
    \bigg[ \Big( \partial_t + \im  k \, u_s(t,y) \Big)\partial_y  - \im k\, \partial_y u_s(t,y)  - \partial_y^3 \bigg]
       \left[
       \bigg( 
            u_s(t,y) + \frac{\sigma_k(t)}{\im k}
        \bigg)
        e^{ \int_0^t \sigma_k(\tau) d \tau}
        \right]
        =
        0,\qquad (t,y) \in [0, T_{\rm max}]\times \mathbb R_+.
\end{equation}
Recall from \eqref{eq:phi-inn-out} that $\phi_{{\rm out}, k}$ is defined as
\begin{equation*}
    \phi_{{\rm out}, k}(t,y) 
        =
        \big(1 - \chi(y)\big)
        H\big(y-a(t)\big)
        \bigg( 
            u_s(t,y)
            +
            \frac{\sigma_k(t)}{\im k}
        \bigg)
        e^{ \int_0^t \sigma_k(\tau) d \tau}.
\end{equation*}
We hence apply the following product rule
\begin{equation*}
\begin{aligned}
    \big(1 - \chi(y)\big)
        H\big(y-a(t)\big)
        &
        \bigg[  
            \Big( \partial_t + \im k \, u_s(t,y) \Big)\partial_y
        \bigg]
        \bigg[
            \bigg( 
            u_s(t,y)
            +
            \frac{\sigma_k(t)}{\im k}
        \bigg)e^{ \int_0^t \sigma_k(\tau) d \tau}
        \bigg]
        =
    \\
    &= 
    \Big( \partial_t + \im k \, u_s(t,y) \Big)
    \bigg[
        \big(1 - \chi(y)\big)
        H\big(y-a(t)\big)
       \partial_y
        \bigg( 
            u_s(t,y)
            +
            \frac{\sigma_k(t)}{\im k}
        \bigg)e^{ \int_0^t \sigma_k(\tau) d \tau}
     \bigg],
\end{aligned}
\end{equation*}
since $\partial_t [(1-\chi(y))H(y-a(t))] = 0$ for any $(t,y) \in [0, T_{\rm max}]\times \mathbb R_+$.  Additionally, by bringing the $\partial_y$ derivative outside the brackets, we can rewrite the last expression as:
\begin{equation*}
\begin{aligned}
     \big(1 &- \chi(y)\big)
        H\big(y-a(t)\big)
        \bigg[  
            \Big( \partial_t + \im k \, u_s(t,y) \Big)\partial_y
        \bigg]
        \bigg[
            \bigg( 
            u_s(t,y)
            +
            \frac{\sigma_k(t)}{\im k}
        \bigg)e^{ \int_0^t \sigma_k(\tau) d \tau}
        \bigg]
        =
    \\
    &
    = 
    \Big( \partial_t + \im k \, u_s(t,y) \Big)\partial_y \phi_{\text{out}, k}(t,y)
    +
    \Big( \partial_t + \im k \, u_s(t,y) \Big)
    \bigg[
        \chi'(y)
        H\big(y-a(t)\big)
            \bigg( 
            u_s(t,y)
            +
            \frac{\sigma_k(t)}{\im k}
        \bigg)e^{ \int_0^t \sigma_k(\tau) d \tau}
     \bigg].
\end{aligned}
\end{equation*}
We multiply \eqref{eq:identity-for-exact-sol-large-y-prop} by the factor $(1-\chi(y))H(y-a(t))$, commuting this factor with the third-order derivative $\partial_y^3$
\begin{align*}
    \bigg[ \Big( \partial_t 
    &+ \im k \, u_s(t,y)  \Big)\partial_y  - \im k\, \partial_y u_s(t,y)   - \partial_y^3 \bigg]
    \phi_{\text{out,k}}(t,y)
    =
    \\
    &=
    -
    \Big(\partial_t + \im k u_s(t,y) \Big)
    \bigg[ 
        \chi'(y) H(y-a(t)) 
        \bigg( 
            u_s(t,y)
            +
            \frac{\sigma_k(t)}{\im k}
        \bigg)
        e^{\int_0^t \sigma_k(\tau)d\tau }
    \bigg]
    +
    \\
    &\hspace{1cm}
    +
    e^{\int_0^t \sigma_k(\tau)d\tau }
    H\big(y-a(t)\big)
    \Bigg\{
    \chi'''(y) 
    \bigg( 
            u_s(t,y)
            +
            \frac{\sigma_k(t)}{\im k}
     \bigg)
    +3 \chi'(y)  \partial_y^2 u_s(t,y)
    +3 \chi''(y) \partial_y u_s(t,y) 
    \Bigg\}.
\end{align*}
We can further rearrange this identity to isolate the time derivative:
\begin{equation}\label{eq:id1-for-phi-out}
\begin{aligned}
    \bigg[ \Big( \partial_t 
    + \im k \, u_s (t,y) \Big)\partial_y  - \im k\, \partial_y u_s (t,y)  - \partial_y^3 \bigg]
    \phi_{\text{out,k}}(t,y)
    =
    e^{\int_0^t \sigma_k(\tau)d\tau }
    H(y-a(t))
    \Bigg\{ 
      -
     \chi'(y) 
     \partial_t\bigg(
         u_s(t,y) + \frac{\sigma_k(t)}{\im k}
     \bigg)
     +
     \\
     -\,
     \bigg(
        \Big( \sigma_k(t) + \im k\, u_s(t,y) \Big) \chi'(y) 
        -\chi'''(y)
     \bigg)
     \bigg( 
        u_s(t,y) 
        +
        \frac{\sigma_k(t)}{\im k}
     \bigg) 
     +3 \chi'(y)   \partial_y^2 u_s(t,y)
     +3 \chi''(y)  \partial_y u_s(t,y) 
    \Bigg\}.
\end{aligned}
\end{equation}
To develop the first term on the right-hand side, we expand  
$\tfrac{\sigma_k(t)}{\im k} =\tfrac{e^{\im \frac{5\pi}{4}}\gamma(t)^2}{\sqrt{k}} -\alpha(t) =
\tfrac{e^{\im \frac{5\pi}{4}} \gamma(t)^2}{\sqrt{k}} -u_s(t, a(t))$. Furthermore, since $u_s$ is a solution to the heat equation, this allows us to express the time derivative as follows:
\begin{align*}
    \partial_t \Big( u_s(t,y) + \frac{\sigma_k(t)}{\im k} \Big)
    &= 
    \partial_t u_s(t,y) + \frac{d}{dt}\left[\frac{e^{i\frac{5\pi}{4}}\gamma(t)^2}{\sqrt{k}} - u_s(t,a(t)) \right]
    \\
    &= 
    \partial_y^2 u_s(t,y)
    +
    \frac{e^{\im \frac{5\pi}{4}}2\gamma(t)\gamma'(t)}{\sqrt{k}}  
    - 
    (\partial_t u_s)(t,a(t)) 
    -
    \underbrace{\partial_y u_s(t,a(t))}_{ = 0}a'(t),    
    \\
    &
    = 
    \partial_y^2 u_s(t,y)
    -
    \partial_y^2 u_s(t,a(t))
    +
    \frac{e^{\im \frac{5\pi}{4}}2\gamma(t)\gamma'(t)}{\sqrt{k}}
    \\
    &
    = 
    \partial_y^2 u_s(t,y)
    -
    2\beta(t)
    +
    \frac{e^{\im \frac{5\pi}{4}}2\gamma(t)\gamma'(t)}{\sqrt{k}}. 
\end{align*}
Substituting this result into the identity \eqref{eq:id1-for-phi-out}, we reorder the right-hand side of the equation as follows:
\begin{equation*}
\begin{aligned}
    \bigg[ \Big( \partial_t 
    + \im  k \, u_s (t,y) \Big)\partial_y  - \im k\, \partial_y u_s (t,y)  - \partial_y^3 \bigg]
    \phi_{\text{out,k}}(t,y)
    =
    e^{\int_0^t \sigma_k(\tau)d\tau }
    H(y-a(t))
    \Bigg\{ 
    -
      \chi'(y) 
      \Big(
           \partial_y^2 u_s(t,y) - 2\beta(t)
      \Big)
    +
    \\
    - \chi'(y) \frac{e^{\im \frac{5\pi}{4}}2\gamma(t)\gamma'(t)}{\sqrt{k}}
    -  
    \bigg(
       \Big( \sigma_k(t) + \im k\, u_s(t,y) \Big) \chi'(y)
       -\chi'''(y) 
    \bigg)
    \bigg( 
      u_s(t,y) -\alpha(t) -\beta(t) (y-a(t))^2
    \bigg) 
      \,+
      \\ 
    -  
    \bigg(
       \Big( \sigma_k(t) + \im k\, u_s(t,y) \Big) \chi'(y)
       -\chi'''(y) 
    \bigg)
    \bigg( 
        \frac{\gamma(t)^2 e^{\im \frac{5\pi}{4}}}{\sqrt{k}}
        +
        \beta(t) (y-a(t))^2
    \bigg) 
    +
    3 \chi'(y)  
    \Big( 
      \partial_y^2 u_s(t,y) -2\beta(t)
    \Big)
    +
    \\
    +6 \chi'(y)  \beta(t) 
    +
    \Big( 
      \partial_y u_s(t,y) - 2\beta(t) (y-a(t)) 
    \Big)
    3 \chi''(y) 
      + 6 \chi''(y)  \beta(t) (y-a(t)) 
    \Bigg\}.
\end{aligned}
\end{equation*}
We invoke once more the notation for the remainder terms as defined in \eqref{eq:first-family-remainders} and \eqref{eq:second-family-remainders}:
\begin{equation*}
\begin{aligned}
    \bigg[ \Big( \partial_t 
    + \im k \, u_s (t,y) \Big)\partial_y  - \im k\, \partial_y u_s (t,y)  - \partial_y^3 \bigg]
    \phi_{\text{out,k}}(t,y)
    =
    e^{\int_0^t \sigma_k(\tau)d\tau }
    \Bigg\{ 
    \mathcal{R}_{5, k}^1(t,y)
    - \chi'(y) \frac{e^{\im \frac{5\pi}{4}}2\gamma(t)\gamma'(t)}{\sqrt{k}}
    +
    \mathcal{R}_{2,k}^1(t,y)
    \\
    -
    \bigg(
       \Big( \sigma_k(t) + \im k\, u_s(t,y) \Big) \chi'(y)
       -\chi'''(y) 
    \bigg)
    \bigg( 
        \frac{\gamma(t)^2 e^{\im \frac{5\pi}{4}}}{\sqrt{k}}
        +
        \beta(t) (y-a(t))^2
    \bigg) H(y-a(t))
    +
    6\chi'(y)\beta(t)H(y-a(t))   
    \\
    + \mathcal{R}_{4, k}^1(t,y)
    + 6 \chi''(y) \beta(t) (y-a(t))H(y-a(t)) 
    \Bigg\}.
\end{aligned}
\end{equation*}
From the definition of $F$ in \eqref{eq:F-proof-prop}, this leads us to the following simplified form:
\begin{equation}\label{eq:proof-eq-of-phiout}
\begin{aligned}
    \bigg[ \Big( \partial_t 
    + \im k \, u_s (t,y) \Big)\partial_y  &- \im k\, \partial_y u_s (t,y)  - \partial_y^3 \bigg]
    \phi_{\text{out,k}}(t,y)
    =
    \\
    &=
    e^{\int_0^t \sigma_k(\tau)d\tau }
    \Bigg\{ 
    \mathcal{R}_{2, k}^1(t,y)+
    \mathcal{R}_{4, k}^1(t,y)+
    \mathcal{R}_{5, k}^1(t,y)
    \bigg\}
    -F(t,y)e^{\int_0^t \sigma_k(\tau)d\tau }.
\end{aligned}
\end{equation}
Finally, we sum Equation \eqref{eq:proof-eq-of-phiinn} for the inner layer and  \eqref{eq:proof-eq-of-phiout} for the outer layer. This yields
\begin{equation}
\begin{aligned}
    \bigg[ \Big( \partial_t 
    + \im k \, u_s (t,y) \Big)\partial_y  - \im k\, \partial_y u_s (t,y)  - \partial_y^3 \bigg]
    \Big(
        \phi_{\text{inn,k}}(t,y)
        +
        \phi_{\text{out,k}}(t,y)
    \Big)
    =
    e^{\int_0^t \sigma_k(\tau)d\tau }
    \bigg(
    \sum_{j = 1}^7
    \mathcal{R}_{j, k}^1
    +
    \sum_{j = 1}^3
    \mathcal{R}_{j, k}^2
    \bigg),
\end{aligned}
\end{equation}    
which concludes the proof of the proposition.
\end{proof}

\section{Estimates of the remainders}\label{sec:estimates-forced-terms}
\noindent 
In the next proposition, we establish some estimates for the forced solution $u_k^{\rm fr}$ and its associated remainders along those described by Part-(a) of \Cref{sec:novelties-some-remarks}. 
\begin{prop}\label{prop:estimates-remainders-and-uf}
    Consider $u^{\rm fr}_k = \partial_y (\phi_{{\rm inn}, k} +\phi_{{\rm out}, k} ) $ of \Cref{def:inner-outer-profiles} and the two families of remainders $\mathcal{R}_{1,k}^1,\dots,\mathcal{R}_{7,k}^1$ and $\mathcal{R}_{1,k}^2,  \mathcal{R}_{2,k}^2, \mathcal{R}_{3,k}^2$ introduced in \Cref{prop:remainder-exact-form}. Define the positive constants
    \begin{alignat*}{4}
        C_{\mathcal{R}} 
        &=
        C_{\mathcal{R}}\big( U_s,\, d,\,\chi \big) 
        &&:=
            10^2
            \bigg( 1 +  \frac{2}{|U_s''(a_0)|} \bigg) 
            \|\chi \|_{H^3} 
            \big( 1 + \|  U_s \|_{W^{4,1}}\big)^2 
            \big(1+d\,\big)^4,
            \\
            \mathcal{D}_m   
            &=
            \mathcal{D}_m  \big( U_s,\, d,\,\chi \big)  
            &&:=
             m! 
             \,2^{m+2} ( 1 + d )^{m+1}
            \Big( 1 + \|U_s\|_{H^m_\lambda}+\|U_s\|_{W^{3,1}} \Big)^m
            \Big( 1 + \|\chi \|_{W^{m+1,\infty}} \Big).
    \end{alignat*}
    Then the following results hold true:
    \begin{itemize}[leftmargin=0.7cm]
        \item[(i)] For any $t\in [0, T_{\rm max}]$ and any $k\in \mathbb N$  
        \begin{equation} \label{eq:EstimateRemainderFirstKind}
            \sum_{j = 1}^7
            \big\|\, 
                \mathcal{R}_{j,k}^1(t, \cdot) 
            \,\big\|_{L^2(\mathbb R_+)} 
            \leq 
            C_{\mathcal{R}}\, k 
            \frac{ e^{ - \frac{d^2}{16 t} } }{\sqrt{t}}.
        \end{equation}
        \item[(ii)] For any $t\in [0, T_{\rm max}]$ and any $k\in \mathbb N$ 
        \begin{equation}\label{eq:prop-remainders-exp-decay-in-k}
            \sum_{j = 1}^3 \big\|\,\mathcal{R}_{j,k}^2(t, \cdot) \,\big\|_{L^2(\mathbb R_+)} 
            \leq 
            C_{\mathcal{R}}
            k^\frac{3}{4}
            \frac{1}{d} 
           e^{    -\sqrt{k} \,\big(\frac{d}{16}\big)^2  \big(\frac{|U_s''(a_0)|}{2} \big)^{1/2}}.
        \end{equation}
        \item[(iii)] For any time $t\in [0, T_{\rm max}]$ and any $k\in \mathbb N$ 
        \begin{equation} \label{eq:ForcedSolutionLowerBound}
            \|  u_k^{\rm fr}(t, \cdot) \|_{L^2(\mathbb R_+)}
            \geq 
            \frac{1}{2}
            \| U_s' \|_{L^2(a_0+\frac{d}{2}, \infty)}
            e^{t \sqrt{k} \frac{|U_s''(a_0)|^{1/2}}{2\sqrt{2}}}.
        \end{equation}
        \item[(iv)] For any $k\in \mathbb N$, $m \in \mathbb N\setminus\{1, 2\}$ and $\lambda>0$, the initial data $u_k^{\rm fr}(0, \cdot)$ satisfies
        \begin{equation}  \label{eq:InitialDataUpperBound}
            \|  u_k^{\rm fr}(0, \cdot ) \|_{H^m_\lambda(\mathbb R_+)} 
            \leq \mathcal{D}_m \,e^{2 \lambda a_0}  k^{\frac{2m-1}{4} }.
        \end{equation}
    \end{itemize}
\end{prop}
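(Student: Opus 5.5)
The four estimates are of different nature, and I would treat them separately, always restricting to the support of $\chi$ and its derivatives, i.e. $y\in[a_0-\tfrac d2,a_0+\tfrac d2]$, with $a(t)\in[a_0-\tfrac d8,a_0+\tfrac d8]$ by \eqref{eq:assumptions-for-Tmax}.

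\textbf{Part (i).} The key input is an exponentially small bound on the Taylor deviations
\[
u_s-\alpha-\beta(y-a)^2,\qquad \partial_yu_s-2\beta(y-a),\qquad \partial_y^2u_s-2\beta ,
\]
together with $\gamma'(t)$ and $a'(t)$. I would use the Green formula for the Dirichlet heat equation on $\mathbb R_+$ (odd reflection). Then $\partial_y^3u_s(t,y)$ is the heat kernel applied to the odd extension of $U_s'''$. Since $U_s'''$ vanishes on $[a_0-d,a_0+d]$ and $y$ lies within $d/2+d/8<3d/4$ of $a_0$, the kernel is evaluated at distance at least $d/4$. Also $\int|U_s'''|\le\|U_s\|_{W^{3,1}}$. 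This gives
\[
|\partial_y^3u_s(t,y)|\lesssim \|U_s\|_{W^{3,1}}\,t^{-1/2}e^{-d^2/(16t)}.
\]
A Taylor expansion with integral remainder around $a(t)$, where $\partial_yu_s(t,a(t))=0$, then bounds all three brackets by $(1+d)^3$ times this quantity.

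For $\gamma'$ and $a'$ I would proceed as follows.
\begin{itemize}
\item From \eqref{def:a(t)}, $a'=-\partial_y^3u_s/\partial_y^2u_s$, which is of the same size because $|\partial_y^2u_s(t,a)|\ge|U_s''(a_0)|/2$.
\item We have $\beta'=\tfrac12(\partial_t\partial_y^2u_s+a'\partial_y^3u_s)$ and $\partial_t\partial_y^2u_s=\partial_y^4u_s$. So the same kernel argument is needed for $\partial_y^4u_s$, giving a $W^{4,1}$ norm and an extra factor $t^{-1/2}$ absorbed by adjusting the exponent (or better, by integrating by parts once to keep $W^{4,1}$ and $t^{-1/2}$).
\end{itemize}

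The remaining factors are controlled as follows.
\begin{itemize}
\item $|\sigma_k|+k|u_s|\lesssim k$.
\item $\Upsilon,\Upsilon',\Upsilon''$ at $\sqrt[4]{k}z$ are bounded polynomially in $\sqrt[4]{k}|z|$: on the ray $\arg=-\pi/8$ the error function is bounded, and $e^{-\zeta^2/2}$ has modulus $e^{-\Re(\zeta^2)/2}$ with $\Re(e^{-i\pi/4}r^2)>0$. This gives growth like $1+\sqrt k(y-a)^2$, which combines with the powers $k^{3/4},\sqrt k,k^{-1/4},\dots$ to at most an overall factor $k$ after using the $\gamma^j$ prefactors.
\end{itemize}
Collecting everything yields \eqref{eq:EstimateRemainderFirstKind} with the crude constant $C_{\mathcal R}$.

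\textbf{Part (ii).} Each bracket in \eqref{eq:second-family-remainders} is multiplied by a derivative of $\chi$, hence supported where $|y-a_0|\ge d/4$, so $|y-a(t)|\ge d/8$. On this set I would write, for $y>a$ (so $\zeta=\sqrt[4]{k}z$ lies on the ray $\arg=-\pi/8$), $1+\erf(\zeta/\sqrt2)-2=-\erfc(\zeta/\sqrt2)$. For $y<a$ (ray $\arg=7\pi/8$), I would use $1+\erf=\erfc(-\cdot)$. Standard complementary error function bounds in the sector $|\arg w|<\pi/4$ give $|\erfc(w)|\lesssim e^{-\Re w^2}$. Here
\[
\Re(\zeta^2/2)=\tfrac12\sqrt k\,\gamma^2(y-a)^2\cos(\pi/4)\ge \sqrt k\,(d/16)^2\big(|U_s''(a_0)|/2\big)^{1/2}
\]
after inserting $\gamma^2\ge(|U_s''(a_0)|/4)^{1/2}$ and adjusting constants. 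The Gaussian term $\zeta e^{-\zeta^2/2}$ decays the same way. The polynomial prefactors, including $\sigma_k\chi'$ of size $k$ divided by $\sqrt k$ and a power of $\sqrt[4]{k}|z|$, give at most $k^{3/4}/d$ times a harmless constant.

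\textbf{Part (iii).} On $y>a_0+d/2$ we have $\chi=0$ and $H=1$, so $u_k^{\rm fr}=\partial_yu_s\,e^{\int_0^t\sigma_k}$. Hence
\[
\|u_k^{\rm fr}\|_{L^2}\ge\|\partial_yu_s(t)\|_{L^2(a_0+d/2,\infty)}\,e^{\int_0^t\Re\sigma_k}.
\]
Since $\Re\sigma_k=\sqrt k\,\gamma^2\cos(\pi/4)\ge\sqrt k\,|U_s''(a_0)|^{1/2}/(2\sqrt2)$ by \eqref{eq:assumptions-for-Tmax}, the bound follows from \eqref{eq:assumption-for-Tmax-and-partialyus}.

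\textbf{Part (iv).} At $t=0$, $u_k^{\rm fr}(0)=\partial_y\phi_{{\rm unst},k}$. The outer part contributes $\|(1-\chi)HU_s'\|_{H^m_\lambda}$ plus $\chi'$ terms times constants of size $\le 1+|U_s(a_0)|$, all uniform in $k$. The inner part is supported in $[a_0-d/2,a_0+d/2]$, where $e^{\lambda y}\le e^{2\lambda a_0}$. Each $y$-derivative of $\Upsilon(\sqrt[4]{k}z)$ brings a factor $\sqrt[4]{k}$. With the prefactor $k^{-1/2}$ and $m+1$ derivatives, and using $|\Upsilon^{(j)}|\lesssim(1+\sqrt[4]{k}|z|)^{2}$ for $j\le 2$ and exponential smallness for $j\ge3$ along the two rays, one obtains $k^{(m+1)/4-1/2}\cdot k^{1/2}\cdot$ (an $L^2$ measure factor). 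This gives $k^{(2m-1)/4}$ after careful counting. The factorial and $2^{m+2}$ come from the Leibniz rule against $\chi$.

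The main obstacle is part (i): obtaining uniform-in-$t$ exponential smallness for $\gamma'$ and $a'$ (fourth derivatives of $u_s$) with exactly the exponent $d^2/(16t)$ and a single $t^{-1/2}$. I would first try putting the extra derivative on $U_s$, using $W^{4,1}$ in the Green formula.
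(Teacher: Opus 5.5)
Your plan follows the paper's route in all four parts: Taylor expansion about $a(t)$ with the Dirichlet Green formula for the first family, comparison of $\erf$ with the Heaviside function on $\operatorname{supp}\chi'$ for the second, the region $y>a_0+\tfrac d2$ for (iii), and Leibniz plus bounds on $\Upsilon^{(n)}$ for (iv). Your handling of $y<a(t)$ via $1+\erf(w)=\mathrm{erfc}(-w)$ is in fact cleaner than the paper's, which writes $H=-1$ there. However, two quantitative steps do not deliver the stated bounds as you wrote them.

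In (i), a kernel evaluated at distance $d/4$ from $\operatorname{supp}U_s'''$ only gives $e^{-d^2/(64t)}$. What you need is that every point $\xi$ of the segment between $a(t)$ and $y$ lies in $[a_0-\tfrac d2,a_0+\tfrac d2]$. Then $|\xi-\omega|\ge\tfrac d2$ for $\omega\notin[a_0-d,a_0+d]$, and $\xi+\omega\ge a_0-\tfrac d2>\tfrac d2$. This gives exactly $e^{-d^2/(16t)}/\sqrt{\pi t}$. Your ``main obstacle'' with $\gamma'$ disappears the same way. Since $\partial_y^2u_s=\partial_t u_s$ vanishes at $y=0$, $\partial_y^4u_s$ is the Dirichlet heat kernel applied to $U_s^{(4)}\in L^1$; this uses $U_s\in H^4_\lambda$ and $U_s(0)=U_s''(0)=0$. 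So it carries a single factor $t^{-1/2}$, and $a'\,\partial_y^3u_s$ is a product of two such small terms.

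The more substantive problem is (ii). The bound $|\mathrm{erfc}(w)|\lesssim e^{-\Re(w^2)}$ is not enough. In $\mathcal R^2_{1,k}$ the $\erf$-part is multiplied by $|1+\sqrt{k}\,z^2|\lesssim 1+\sqrt{k}\,d^2$, and, by your own count, by a prefactor of size $\sqrt{k}$. Writing $E=\sqrt{k}\,(d/16)^2(|U_s''(a_0)|/2)^{1/2}$, you therefore only get $\|\mathcal R^2_{1,k}\|_{L^2}\lesssim k\,(1+d)^2e^{-E}$, not $k^{3/4}d^{-1}e^{-E}$.

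Under the bounds \eqref{eq:assumptions-for-Tmax} alone, the exponent has no slack: it is attained at $|y-a(t)|=d/8$ with $\gamma(t)^2=|U_s''(a_0)|^{1/2}/2$. So the extra $k^{1/4}$ cannot be absorbed into the exponential without weakening $E$.

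The missing ingredient is the sharper tail bound $|\mathrm{erfc}(w)|\le C|w|^{-1}e^{-\Re(w^2)}$ on the rays $\arg(\pm w)=-\pi/8$. As in Lemma~\ref{lemma:erf-H}, set $\omega_0=\tfrac{\sqrt[4]{k}}{\sqrt2}\gamma(t)|y-a(t)|$, use $e^{-\omega^2/\sqrt2}\le(\omega/\omega_0)e^{-\omega^2/\sqrt2}$ for $\omega\ge\omega_0$, and integrate exactly. Since $\omega_0\gtrsim\sqrt[4]{k}\,|U_s''(a_0)|^{1/4}d$, this produces precisely the factor $(\sqrt[4]{k}\,d)^{-1}$, hence the $k^{3/4}/d$ in part (ii). The remainders $\mathcal R^2_{2,k}$ and $\mathcal R^2_{3,k}$ are fine with your cruder bound.
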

%
%
%
%
%
\subsection{The upper bound on the first family of remainders} 
We begin with the proof of part~$(i)$ of \Cref{prop:estimates-remainders-and-uf} on the first family of remainders $\mathcal{R}_{j,k}^1$. The argument relies on the following  estimates about the deviation of $u_s$ from its second-order Taylor polynomial at $y = a(t)$, the derivatives $a'(t)$ and $\gamma'(t)$, and the growth properties of $\Upsilon$:
\begin{itemize}[leftmargin=0.7cm]
 \item[(a)] 
 For any time $t \in [0, T_{\rm max}]$ and any index $n \in \{0,1,2\}$
    \begin{equation}\label{eq:ineq-in-lemma-us}
    \begin{aligned}
        &\max_{a_0-\frac{d}{2}\leq y \leq  a_0+ \frac{d}{2}}
        \bigg|
            \partial_y^n
            \Big( 
                u_s(t,y) 
                -  
                \alpha(t)
                - \beta(t)  (y-a(t))^2 
            \Big)
        \bigg|
        \leq 
        d^{3-n}
        \big\|  U_s^{(3)}\big\|_{L^1(\mathbb R_+)}
        \frac{e^{ - \frac{d^2}{16 t} }}{\sqrt{\pi t}}
        .
    \end{aligned}
    \end{equation}
    \item[(b)] For any time $t \in [0, T_{\rm max}]$
    \begin{equation}\label{eq:ineq-in-lemma-us-a'-gamma'}
    \begin{aligned}
        |a'(t)| 
        &\leq 
       \frac{2}{|U_s''(a_0)|}
       \| U_s^{(3)} \|_{L^1(\mathbb R_+)}
       \frac{e^{ - \frac{d^2}{16 t} }}{\sqrt{\pi t}},
       \qquad 
       |\gamma'(t)|
       \leq 
       \frac{2}{|U_s''(a_0)|^\frac{3}{4}}
       \| U_s^{(3)} \|_{W^{1,1}(\mathbb R_+)}
       \frac{e^{ - \frac{d^2}{16 t} }}{\sqrt{\pi t}}
       .
    \end{aligned}
    \end{equation}
    \item[(c)] For any frequency $k \in \mathbb{N}$, any time $t \in [0, T_{\rm max}]$ and any index $n\in \{ 0, 1, 2\}$
    \begin{equation}\label{eq:ineq-in-lemma-us-Upsilon}
        \max_{a_0-\frac{d}{2}\leq y \leq  a_0+ \frac{d}{2}}
       \Big| \Upsilon^{(n)}\big(\sqrt[4]{k}\,z (t,y) \big) \Big|
       \leq 
       4 \big(  1+ |U_s''(a_0)|^{\frac{2-n}{4}} d^{2-n}   \big) k^\frac{2-n}{4}.
    \end{equation}
    \end{itemize}
In this subsection, we focus on the application of \eqref{eq:ineq-in-lemma-us}, \eqref{eq:ineq-in-lemma-us-a'-gamma'} and \eqref{eq:ineq-in-lemma-us-Upsilon} to the remainders $\mathcal{R}_{j,k}^1$, postponing their proofs to \Cref{lemma:estimates-for-us-appx} in the Appendix, where they are formalised using the Taylor theorem and the Green's formula for the heat kernel with Dirichlet boundary conditions.
\begin{proof}[Proof of \Cref{prop:estimates-remainders-and-uf}, Part~$(i)$]
To shorten the notation, we set the interval $I = [a_0 - \tfrac{d}{2}, a_0 + \tfrac{d}{2}]$. Additionally, we repeatedly use the following upper bounds for $|U_s''(a_0)|$ and $\gamma(t)$  at any time $t\in [0, T_{\rm max}]$:
\begin{equation}\label{eq:|Us''(a)|and|gamma'(t)|}
    |U_s''(a_0)|= 
    \bigg| \int_{a_0}^{+\infty} U_s^{(3)}(\omega) d\omega \bigg| \leq \| U_s^{(3)} \|_{L^1}
    ,\qquad 
    |\gamma(t)| 
    =
    \sqrt[4]{\frac{|\partial_y^2 u_s(t,a(t))|}{2}}
    \leq 
    \sqrt[4]{\frac{2|U_s''(a_0)|}{2}} = \| U_s^{(3)} \|_{L^1}^\frac{1}{4},
\end{equation}
which are satisfied thanks to our assumption in \eqref{eq:assumptions-for-Tmax}.  We begin by estimating the first five remainders, proving that they satisfy the bound
\begin{equation}\label{eq:ineq-for-first-five-remainders-first-family}
    \| \mathcal{R}_{j,k}^1 (t)  \|_{L^2}
    \leq k\frac{ e^{ - \frac{d^2}{16 t} } }{\sqrt{t}}\,
    \,\frac{4}{\sqrt{\pi}}
    \| \chi \|_{H^3}
    \big( 1 + \|  U_s \|_{W^{4,1}}\big)^2 
    \big(1+d\,\big)^4,\qquad j \in \{1, \dots, 5\}.
\end{equation}
The first remainder $\mathcal{R}_{1, k}^1$ in \eqref{eq:first-family-remainders} has $L^2$-norm equal to 
\begin{equation*}
    \| \mathcal{R}_{1,k}^1 (t)  \|_{L^2}
    = 
    k^\frac{3}{4}
    \gamma(t)^3 
    \bigg(
    \int_0^\infty 
    \bigg|
        \Big(
            u_s(t, y) -\alpha(t) - \beta(t) (y -a(t))^2
        \Big)
        \chi(y) \Upsilon'\big(\sqrt[4]{k}\, z(t, y) \big)
    \bigg|^2
    dy
    \bigg)^\frac{1}{2}.
\end{equation*}
We hence recall that $\chi$ is supported on $I = [a_0 - \tfrac{d}{2}, a_0 + \tfrac{d}{2}]$, therefore, thanks to \eqref{eq:|Us''(a)|and|gamma'(t)|} and the H\"older inequality:
\begin{equation*}
\begin{aligned}
    \| \mathcal{R}_{1,k}^1 &(t )  \|_{L^2}
    \leq 
    k^\frac{3}{4}
    \| U_s^{(3)} \|_{L^1}^\frac{3}{4}
    \left\| 
        u_s(t, \cdot ) -\alpha(t) - \beta(t) (\cdot -a(t))^2  
    \right\|_{L^\infty(I)}
    \| \chi \|_{L^2}
    \left\| 
        \Upsilon'\big(\sqrt[4]{k}\, z(t, \cdot)  
    \right\|_{L^\infty(I)}
\end{aligned}
\end{equation*}
where we denoted $L^p = L^p(\mathbb{R}_+)$ for any $p \geq 1$. Next, we invoke \eqref{eq:ineq-in-lemma-us} with $n = 0$ and \eqref{eq:ineq-in-lemma-us-a'-gamma'} with $n = 1$, to obtain
\begin{equation*}
\begin{aligned}
    \| \mathcal{R}_{1,k}^1 (t)  \|_{L^2}
    &\leq 
    k^\frac{3}{4} 
    \| U_s^{(3)} \|_{L^1}^\frac{3}{4}
    \;
    \big\|  U_s^{(3)} \big\|_{L^1}
    d^3\,
    \frac{ e^{ - \frac{d^2}{16 t} } }{\sqrt{\pi t}}
    \;
    \| \chi \|_{L^2}
    \;
    4 
    \big( 1 + \big\|  U_s^{(3)} \big\|_{L^1}^\frac{1}{4} d \big) \sqrt[4]{k}
    \\
    &\leq 
    4 k 
    \big( 1 + 
        \| U_s^{(3)} \|_{L^1}
    \big)^\frac{7}{4}
    \big( 1 + d \big)^3
    \frac{ e^{ - \frac{d^2}{16 t} } }{\sqrt{\pi t}}
    \;
    \| \chi \|_{H^3}
    \;
    \big( 1 + 
        \| U_s^{(3)} \|_{L^1}
    \big)^\frac{1}{4}
    \big( 1 + d \big)
    \\
    &
    \leq 
    k\frac{ e^{ - \frac{d^2}{16 t} } }{\sqrt{t}}\,
    \,\frac{4}{\sqrt{\pi}}
    \| \chi \|_{H^3}
    \big( 1 + \|  U_s \|_{W^{4,1}}\big)^2 
    \big(1+d\,\big)^4,
\end{aligned}
\end{equation*}
which is \eqref{eq:ineq-for-first-five-remainders-first-family} for $j = 1$. Next, we address the second remainder $\mathcal{R}_{2,k}^1$ in \eqref{eq:first-family-remainders}, whose $L^2$-norm is equal to
\begin{equation*}
    \| \mathcal{R}_{2,k}^1 (t)  \|_{L^2}
    =
    \bigg(
    \int_{a(t)}^{\infty} 
    \bigg|
    \Big(  
        u_s(t,y)  - 
        \alpha(t) -
        \beta(t) (y-a(t))^2
    \Big)
    \Big(
        \chi^{(3)}(y) -
        \Big(
            \sigma_k(t)   +  \im  k \, u_s(t,y)   
        \Big)
        \chi'(y)
    \Big)
    \bigg|^2
    dy
    \bigg)^\frac{1}{2}.
\end{equation*}
We hence remark that the function $\sigma_k(t)= e^{-\frac{i\pi}{4}}\gamma(t)^2 \sqrt{k} - i k \,u_s(t,a(t))$ defined in \eqref{eq:z-sigmak-in-def-of-phi-inn-out} satisfies 
\begin{equation}\label{eq:est-of-sigmak(t)}
    |\sigma_k(t)| \leq  
    \gamma(t)^2 \sqrt{k} +  k \| u_s \|_{L^\infty(\mathbb R_+ \times \mathbb R_+) }
    \leq
        \sqrt{k} \,
        | U_s''(a_0) |^\frac{1}{2} +  k \| U_s \|_{L^\infty(\mathbb R_+)} 
    \leq 
    2 k \big( 1 + \| U_s \|_{W^{4,1}}\big),
\end{equation}
thanks to \eqref{eq:|Us''(a)|and|gamma'(t)|} and the maximum principle of the heat equation. Thus applying the H\"older inequality and the relation \eqref{eq:ineq-in-lemma-us} with $n = 0$, we obtain
\begin{equation*}
\begin{aligned}
    \| \mathcal{R}_{2,k}^1 (t)  \|_{L^2}
    &\leq 
    \left\| 
        u_s(t, \cdot ) -\alpha(t) - \beta(t) (\cdot -a(t))^2  
    \right\|_{L^\infty(I)}
    \Big( \| \chi^{(3)} \|_{L^2} + \big( |\sigma_k(t)| + k 
    \| u_s(t) \|_{L^\infty} \big) \| \chi' \|_{L^2} \Big)
    \\
    &\leq 
    \big\|  U_s^{(3)} \big\|_{L^1}
    d^3\,
   \frac{e^{ - \frac{d^2}{16 t} } }{\sqrt{\pi t}}
    \| \chi \|_{H^3}
    \Big( 1 +2 k \big( 1 + \| U_s \|_{W^{4,1}}\big) + 
    k \| U_s \|_{L^\infty} \Big)
    \\
    &\leq 
    k \frac{ e^{ - \frac{d^2}{16 t} } }{\sqrt{t}}
    \frac{4}{\sqrt{\pi}}
    \| \chi \|_{H^3}
    \big( 1 + \|  U_s \|_{W^{4,1}}\big)^2 
    \big(1+d\,\big)^4,
\end{aligned}   
\end{equation*}
which is \eqref{eq:ineq-for-first-five-remainders-first-family} for $j = 2$. The third remainder $\mathcal{R}_{3,k}^1$ in \eqref{eq:first-family-remainders} is estimated using \eqref{eq:ineq-in-lemma-us} with $n=1$ and \eqref{eq:ineq-in-lemma-us-Upsilon} with $n=0$:
\begin{align*}
    \| \mathcal{R}_{3,k}^1 (t)  \|_{L^2} &= 
    \sqrt{k}\,
    \gamma(t)^2 
    \bigg(
    \int_0^\infty 
    \bigg|
        \Big(
            \partial_y u_s(t, y) - 2 \beta(t) (y -a(t))
        \Big)
        \chi(y) \Upsilon\big(\sqrt[4]{k}\, z(t, y) \big)
    \bigg|^2
    dy
    \bigg)^\frac{1}{2}
    \\
    &\leq
    \sqrt{k}\, \|U_s^{(3)}\|_{L^1}^\frac{1}{2} 
    \big\| \partial_y u_s(t, \cdot ) - 2 \beta(t) (\cdot  -a(t))
    \big\|_{L^\infty(I)} \| \chi \|_{L^2} 
    \|  \Upsilon\big(\sqrt[4]{k}\, z(t, \cdot ) \|_{L^\infty(I)}
    \\
    &\leq 
    \sqrt{k}\, 
    \|U_s^{(3)}\|_{L^1}^\frac{1}{2}
    d^2 \| U_s^{(3)} \|_{L^1} \frac{e^{ - \frac{d^2}{16 t} } }{\sqrt{\pi t}}
    \| \chi \|_{H^3} 
    4 \big( 1+ \| U_s^{(3)} \|_{L^1}^\frac{1}{2} d^2 \big) \sqrt{k}
    \\
    &
    \leq 
    k\frac{ e^{ - \frac{d^2}{16 t} } }{\sqrt{\pi t}}\frac{4}{\sqrt{\pi}}
    \| \chi \|_{H^3}
    \big( 1 + \|  U_s \|_{W^{4,1}}\big)^2 
    \big(1+d\,\big)^4,
\end{align*}
namely \eqref{eq:ineq-for-first-five-remainders-first-family} for $j = 3$. 
The fourth and fifth remainders $\mathcal{R}_{4,k}^1$, $\mathcal{R}_{5,k}^1$ in   \eqref{eq:first-family-remainders} can be tackled together using \eqref{eq:ineq-in-lemma-us} once more:
\begin{align*}
    \max
    \Big\{ 
        \| \mathcal{R}_{4,k}^1 (t)\|_{L^2},\,
        \| \mathcal{R}_{5,k}^1 (t)  \|_{L^2}
    \Big\}
    &\leq 
    \max
    \Big\{ 
        \| \partial u_s (t, \cdot ) - 2\beta(t)(\cdot -a(t)) \|_{L^\infty(I)},\, 
        \| \partial^2 u_s (t, \cdot ) - 2\beta(t) \|_{L^\infty(I)}
    \Big\}
    4 \| \chi \|_{H^3}
    \\
    &\leq 
    \max\{ d^2,\, d\}
     \| U_s^{(3)} \|_{L^1}\frac{e^{ - \frac{d^2}{16 t} } }{\sqrt{\pi t}}
    \leq 
    4\,
    k\,
    \| \chi \|_{H^3}
    \big( 1 + \|  U_s \|_{W^{4,1}}\big)^2 
    \big(1+d\,\big)^4
    \;
    \frac{ e^{ - \frac{d^2}{16 t} } }{\sqrt{\pi t}},
\end{align*}
which is \eqref{eq:ineq-for-first-five-remainders-first-family} for $j = 4,\,5$. Since $20/\sqrt{\pi} \leq 12$, we have proven therefore that
\begin{equation}\label{eq:sum-first-five-remainders-first-family}
    \sum_{j = 1}^5 
    \| \mathcal{R}_{j,k}^1 (t)  \|_{L^2}
    \leq k\frac{ e^{ - \frac{d^2}{16 t} } }{\sqrt{t}}\,
    \,12
    \| \chi \|_{H^3}
    \big( 1 + \|  U_s \|_{W^{4,1}}\big)^2 \big(1+d\,\big)^4.
\end{equation}
It remains to estimate $\mathcal{R}_{6, k}^1$ and $\mathcal{R}_{7, k}^1$ in \eqref{eq:first-family-remainders}. For $\mathcal{R}_{6, k}^1$, we first  apply  \eqref{eq:ineq-in-lemma-us} and \eqref{eq:ineq-in-lemma-us-Upsilon}, to gather
\begin{align*}
    \| \mathcal{R}_{6,k}^1 (t)\|_{L^2}
    &\leq 
    \frac{1}{\sqrt{k}}
    \gamma'(t)  \|\chi \|_{H^1} \Big( 2 \gamma(t) + 2 \gamma(t)
    \|  \Upsilon\big(\sqrt[4]{k}\, z(t, \cdot ) \|_{L^\infty(I)} 
    + 3 \gamma(t)^2
    \|  \Upsilon'\big(\sqrt[4]{k}\, z(t, \cdot ) \|_{L^\infty(I)} \Big)
    \\
    &\leq 
    \frac{1}{\sqrt{k}}
     \gamma'(t)  
     \|\chi \|_{H^1} 
     \bigg( 
      2 \| U_s^{(3)} \|_{L^1}^\frac{1}{4} + 
      8 \| U_s^{(3)} \|_{L^1}^\frac{1}{4} ( 1 +  \| U_s^{(3)} \|_{L^1}^\frac{1}{2} d^2 ) \sqrt{k} + 
      12\| U_s^{(3)} \|_{L^1}^\frac{1}{2}
      ( 1 +  \| U_s^{(3)} \|_{L^1}^\frac{1}{4} d ) \sqrt[4]{k}
      \bigg)
      \\
      & 
     \leq 
     22\,
     \gamma'(t)  
     \|\chi \|_{H^3} 
     \big( 1+ d\big)^2
     \big( 1 + \| U_s^{(3)} \|_{L^1} \big).
\end{align*}
Next, thanks to \eqref{eq:ineq-in-lemma-us-a'-gamma'} and the fact that $22/\sqrt{\pi} \leq 13$, we further obtain
\begin{equation*}
    \| \mathcal{R}_{6,k}^1 (t)\|_{L^2} \leq 
    k
    \frac{ e^{ - \frac{d^2}{16 t} } }{\sqrt{t}}
    \,
    13 \frac{2}{|U_s''(a_0)|}  
    \|\chi \|_{H^3} 
    \big( 1 + \|  U_s \|_{W^{4,1}}\big)^2 
    \big(1+d\,\big)^4.
\end{equation*}
Finally
\begin{align*}
    \| \mathcal{R}_{7,k}^1 (t)\|_{L^2}
    &\leq 
    \Big( |\gamma'(t)| d + \gamma(t)|a'(t)|\Big)
    \bigg( \| \chi' \|_{L^2} \frac{\gamma(t)^2}{\sqrt[4]{k}} 
     \|  \Upsilon'\big(\sqrt[4]{k}\, z(t, \cdot ) \|_{L^\infty(I)} 
     +
     \| \chi \|_{L^2}  \gamma(t)^3  \| \Upsilon''\big(\sqrt[4]{k}\, z(t, \cdot ) \|_{L^\infty(I)} 
     \bigg)
     \\
     &\leq 
     \| U_s^{(3)} \|_{W^{1,1}}
    \frac{ e^{ - \frac{d^2}{16 t} } }{\sqrt{\pi t}}
    \bigg( 
    \frac{2}{|U_s''(a_0)|^\frac{3}{4}} 
    d + \frac{2}{|U_s''(a_0)|^\frac{1}{4}} 
    \bigg)
    \| \chi \|_{H^3}
    4
    \bigg(  \| U_s^{(3)} \|_{L^1}  \big( 1 + \| U_s^{(3)} \|_{L^1}^\frac{1}{4}d \big)
     + \| U_s^{(3)} \|_{L^1}^\frac{3}{4}
     \bigg)
     \\
     &\leq 
     k 
     \frac{ e^{ - \frac{d^2}{16 t} } }{\sqrt{t}}
     \,
     5
     \bigg(  1 +  \frac{2}{|U_s''(a_0)|}  \bigg)
    \|\chi \|_{H^3} 
    \big( 1 + \|  U_s \|_{W^{4,1}}\big)^2 
    \big(1+d\,\big)^4.
\end{align*}
Taking the sum between \eqref{eq:sum-first-five-remainders-first-family} and the last two relations concludes the proof of Part $(i)$ of \Cref{prop:estimates-remainders-and-uf}.
\end{proof}
\subsection{An upper bound on the second family of remainders} We now deal with Part~(ii) of \Cref{prop:estimates-remainders-and-uf} about the second family of remainders $\mathcal{R}_{1,k}^2$, $\mathcal{R}_{2,k}^2$, $\mathcal{R}_{3,k}^2$ as defined in \eqref{eq:second-family-remainders}. We prove that their $L^2$-norms exhibit exponential decay in the frequencies $k \in \mathbb{N}$, as described by \eqref{eq:prop-remainders-exp-decay-in-k}.

\smallskip 
\noindent 
These terms depend mainly on combinations between the function $\Upsilon$ of \eqref{def:Upsilon-fct} and the Heaviside function $H(y-a(t))$. In particular, we will repeatedly rely on the following inequality about the $\erf$-term of $\Upsilon$ and $H$:
\begin{equation}\label{eq:erf-H-estimate}
    \bigg| 
        \,
        \erf\bigg( \frac{\sqrt[4]{k}\,z(t,y)}{\sqrt{2}}\bigg) - H(y-a(t)) 
        \,
    \bigg| 
    \leq     
    \frac{13}{|U_s''(a_0)|^\frac{1}{4}} 
    \frac{1}{\sqrt[4]{k}\,d}
    \,
    e^{    -\sqrt{k} \,\big(\frac{d}{16}\big)^2  \big(\frac{|U_s''(a_0)|}{2} \big)^{1/2}}
\end{equation}
for any $y \in \mathbb{R}_+ \setminus [a_0 - \tfrac{d}{4}, a_0 + \tfrac{d}{4}]$. Roughly speaking, this inequality holds true, since $z(t,y) = \gamma(t)(y-a(t))e^{-\frac{i\pi}{8}}$ is in the sector $|\arg(z) |< \tfrac{\pi}{4}$ if $y >a_0 +\tfrac{d}{2}>a(t)$ and $|\arg(-z) |< \tfrac{\pi}{4}$ if $y<a_0 -\tfrac{d}{2}<a(t)$, so that the $\erf$ function converges exponentially to $\pm 1$ with same sign to the one of $y-a(t)$. We postpone the formal proof of \eqref{eq:erf-H-estimate} to \Cref{lemma:erf-H}, in the Appendix, and focus on its application to $\mathcal{R}_{1,k}^2$, $\mathcal{R}_{2,k}^2$, and $\mathcal{R}_{3,k}^2$.
\begin{proof}[Proof of \Cref{prop:estimates-remainders-and-uf}, Part~$(ii)$] 
All remainders depend on some derivative of the cutoff function $\chi$, which is supported in $[a_0 - \tfrac{d}{2}, a_0 + \tfrac{d}{2}]$ and equal to $1$ on $[a_0 - \tfrac{d}{4}, a_0 + \tfrac{d}{4}]$. Consequently, $\mathcal{R}_{1,k}^2$, $\mathcal{R}_{2,k}^2$, and $\mathcal{R}_{3,k}^2$ have support in $[0, T_{\rm max}] \times K$, with $K = [a_0 - \tfrac{d}{2}, a_0 - \tfrac{d}{4}] \cup [a_0 + \tfrac{d}{4}, a_0 + \tfrac{d}{2}]$. We will use in particular that 
\begin{equation}\label{eq:estimate-y-a(t)-propositionest-2nd-part}
    \frac{d}{8}  <  |\, y-a(t) \, | \leq d, \qquad \text{for any}\quad (t,y) \in [0, T_{\rm max}] \times K,
\end{equation}
which follows from our assumption $a(t) \in [a_0-\tfrac{d}{8}, a_0+\tfrac{d}{8}]$, introduced in \eqref{eq:assumptions-for-Tmax}. 

\smallskip 
\noindent 
Our goal is to establish that, for each $j \in \{1,2,3\}$ and any $t \in [0, T_{\rm max}]$, the following upper bound holds:
\begin{equation}\label{eq:ineq-for-remainders-second-family}
    \| \mathcal{R}_{j,k}^2(t) \|_{L^2}
    \leq 
    \frac{k^\frac{3}{4} 
    }{d} 
    e^{    -\sqrt{k} \,\big(\frac{d}{16}\big)^2  \big(\frac{|U_s''(a_0)|}{2} \big)^{1/2}}
    24
    \| \chi \|_{H^3} 
    \Big( 1 + \frac{2}{|U_s''(a_0)|} \Big)
    \big( 1 + \| U_s \|_{W^{4,1}} \big)^2(1 +d\,)^4 
    .
\end{equation}
The desired inequality \eqref{eq:prop-remainders-exp-decay-in-k} and the conclusion of the proof follows then by summing over $j = 1, 2, 3$. We begin with the estimate of $\mathcal{R}_{1,k}^2$ in \eqref{eq:second-family-remainders}, whose $L^2$-norm satisfies at any $t\in [0, T_{\rm max}]$:
\begin{align*}
   \| \mathcal{R}_{1,k}^2&(t) \|_{L^2}
   = 
   \\
   &
   \frac{\gamma(t)^2 }{\sqrt{k}}
   \bigg( 
   \int_K
   \bigg|
    \Big[
    \Upsilon(\sqrt[4]{k}\,z(t,y)) 
    -
    H(y-a(t))
    \big( 
            1
            +
            \sqrt{k}\, z(t,y)^2
    \big)
    \Big]
    \Big(
        \big(   \sigma_k(t)  +  \im k\, u_s(t,y) \big)\chi'(y) 
        - \chi'''(y) 
    \Big)
    \bigg|^2
    dy
    \bigg)^\frac{1}{2}.
\end{align*}
By applying the H\"older inequality, we obtain
\begin{equation*}
    \| \mathcal{R}_{1,k}^2(t) \|_{L^2}
    \leq 
    \frac{\gamma(t)^2 }{\sqrt{k}} 
    \| \chi \|_{H^3}
    \Big( 
        1 + |\sigma_k(t)| + k \| u_s(t,\cdot) \|_{L^\infty}
    \Big)
    \Big\| 
        \Upsilon(\sqrt[4]{k}\,z(t,\cdot )) 
    -
    H(\cdot -a(t))
    \Big( 
            1
            +
            \sqrt{k}\, z(t,\cdot)^2
    \Big)
    \Big\|_{L^\infty(K)}.
\end{equation*}
We hence recall that  $\gamma(t)\leq \| U_s^{(3)} \|_{L^1}^{1/4}$ from \eqref{eq:|Us''(a)|and|gamma'(t)|} and $|\sigma_k(t)| \leq 2k ( 1 + \| U_s\|_{W^{4,1}})$ from \eqref{eq:est-of-sigmak(t)}. Thanks to the maximum principle of the heat equation $\| u_s \|_{L^\infty(\mathbb R_+ \times \mathbb R_+)} \leq \| U_s \|_{L^\infty(\mathbb R_+)}\leq \| U_s \|_{W^{4,1}}$, we deduce therefore that
\begin{equation}\label{eq:first-ineq-of-R1k2}
    \| \mathcal{R}_{1,k}^2(t) \|_{L^2}
    \leq 
    3
    \sqrt{k} \,
     \| \chi \|_{H^3}
    \big( 
        1 + \| U_s\|_{W^{4,1}}
    \big)
    \big\| \,
        \Upsilon(\sqrt[4]{k}\,z(t,\cdot ))-\big(1 + \sqrt{k}\, z(t,\cdot)^2 \big)H(\cdot -a(t))
    \,\big\|_{L^\infty(K)}.
\end{equation}
We develop the last $L^\infty$-norm using the definition of $\Upsilon$, provided in \eqref{def:Upsilon-fct}. More precisely, for any $(t,y) \in [0, T_{\rm max}]\times K$,
\begin{equation}\label{eq:Ups-H-formula}
\begin{aligned}
    \Upsilon(\sqrt[4]{k}\,z(t,y))-\Big(1 &+ \sqrt{k}\, z(t,y)^2 \Big)H(y -a(t)) = 
    \\
    &=
    \frac{ \sqrt[4]{k}\, z(t,y)}{\sqrt{2\pi}}e^{ - \sqrt{k} \frac{z(t,y)^2}{2}} 
    +
    \big( 1 + \sqrt{k}\,z(t,y)^2 \big)
    \frac{1}{2}
    \bigg( 
        \erf\bigg( \frac{\sqrt[4]{k}\,z(t,y)}{\sqrt{2}}\bigg)  
         - H(y-a(t))
    \bigg).
\end{aligned}
\end{equation}
We recall that $z(t,y) = \gamma(t) (y-a(t)) e^{-\frac{i\pi}{8}}$, by  definition in \eqref{eq:z-sigmak-in-def-of-phi-inn-out}, and observe that
\begin{equation*}
    \frac{|U_s''(a_0)|^{1/4}}{\sqrt{2}} \leq \gamma(t)\leq |U_s''(a_0)|^{1/4}\leq  \| U_s^{(3)} \|_{L^1}^{1/4}\qquad 
\end{equation*}
from the assumptions in \eqref{eq:assumptions-for-Tmax}. For any $(t,y) \in [0, T_{\rm max}] \times K$, the first element of the sum in \eqref{eq:Ups-H-formula} is bounded by
\begin{align*}
    \bigg| 
    \frac{ \sqrt[4]{k}\, z(t,y)}{\sqrt{2\pi}} 
    e^{ - \sqrt{k} \frac{z(t,y)^2}{2}} 
    \bigg|
    &= 
    \frac{\sqrt[4]{k}\,\gamma(t) |\,y-a(t)\,| }{\sqrt{2\pi}} 
    e^{ -\sqrt{k} \, \frac{\gamma(t)^2(y-a(t))^2}{2\sqrt{2}}}\\
    &\leq 
    \frac{\sqrt[4]{k}\| U_s^{(3)} \|_{L^1}^{1/4} \,d}{\sqrt{2\pi}}
    e^{    -\sqrt{k} \,\big(\frac{d}{16}\big)^2  \big(\frac{|U_s''(a_0)|}{2} \big)^{1/2}}
    \\
    &\leq 
    \sqrt[4]{k} \, \big( 1 + \| U_s \|_{W^{4,1}} \big)(1 +d\,)^4 
    \Big( 1 + \frac{2}{|U_s''(a_0)|} \Big)
    \frac{1}{d} 
    e^{    -\sqrt{k} \,\big(\frac{d}{16}\big)^2  \big(\frac{|U_s''(a_0)|}{2} \big)^{1/2}},
\end{align*}
where we used \eqref{eq:estimate-y-a(t)-propositionest-2nd-part}. Furthermore, applying \eqref{eq:erf-H-estimate}, we obtain
\begin{align*}
        \frac{1}{2}
        \Big| 
        \,\Big( 1 + \sqrt{k}\,z(t,y)^2 \Big)
    &   \Big( 
            \erf\bigg( \frac{\sqrt[4]{k}\,z(t,y)}{\sqrt{2}}\bigg) - H(y-a(t)) 
        \Big) 
        \Big| 
        \leq
        \\
        &\leq 
        \frac{1}{2}
        \Big(\, 1 + \sqrt{k} \, \gamma(t) (y-a(t))^2 \Big) 
        \frac{13}{|U_s''(a_0)|^\frac{1}{4}} 
        \frac{1}{\sqrt[4]{k}\,d}
        \,
        e^{    -\sqrt{k} \,\big(\frac{d}{16}\big)^2  \big(\frac{|U_s''(a_0)|}{2} \big)^{1/2}}
        \\
        &\leq 
        7 \sqrt[4]{k} \, \big(\, 1 + \| U_s \|_{W^{4,1}} \big)( 1 +  d \,)^4 
        \Big( 1 + \frac{2}{|U_s''(a_0)|} \Big)
        \frac{1}{d}
        e^{    -\sqrt{k} \,\big(\frac{d}{16}\big)^2  \big(\frac{|U_s''(a_0)|}{2} \big)^{1/2}}
\end{align*}
Coupling the last two relations together with \eqref{eq:first-ineq-of-R1k2} yields \eqref{eq:ineq-for-remainders-second-family} for $j = 1$.  We next address the second remainder $\mathcal{R}_{2, k}^2$ of \eqref{eq:second-family-remainders}, whose $L^2$-norm satisfies at any time $t\in [0, T_{\rm max}]$:
\begin{align*}
    \| \mathcal{R}_{2, k}^2(t) \|_{L^2} 
    &
    = 
    3\frac{\gamma(t)^3}{\sqrt[4]{k}} 
    \bigg( 
    \int_K
    \Big|
    \Big[\, 
        \Upsilon'(\sqrt[4]{k}\,z(t,y)) - 2 \sqrt{k} \,z(t,y) H(y-a(t))
    \,\Big] 
    \chi''(y)
    \Big|^2 
    dy
    \bigg)^\frac{1}{2}
    \\
    &\leq 
    \frac{3 \| U_s^{(3)} \|_{L^1}^\frac{3}{4}}{\sqrt[4]{k}} \| \chi \|_{H^3} 
    \big\| \,
        \Upsilon'(\sqrt[4]{k}\,z(t,\cdot)) - 2 \sqrt{k} \,z(t,,\cdot) H(\cdot-a(t))
    \,\big\|_{L^\infty(K)}
\end{align*}
From the derivative of $\Upsilon'$ we obtain with a similar argument as before 
\begin{align*}
    \Big| 
        \Upsilon'(\sqrt[4]{k}\,z(t,y)) 
        &- 2 \sqrt{k} \,z(t,y) H(y-a(t))
    \Big|
    =
    \bigg| 
    \sqrt{\frac{2}{\pi}}e^{ - \sqrt{k} \frac{z(t,y)^2}{2}}  + 
    \sqrt[4]{k}\, z(t,y) \Big( 
            \erf\bigg( \frac{\sqrt[4]{k}\,z(t,y)}{\sqrt{2}}\bigg) - H(y-a(t)) 
        \Big) 
    \bigg|
    \\
    &\leq 
    \sqrt{\frac{2}{\pi}}
    e^{ -\sqrt{k} \, \frac{\gamma(t)^2(y-a(t))^2}{2\sqrt{2}}}
    +
    \sqrt[4]{k}\,\gamma(t) |\, y-a(t) | 
    \Big| 
       \erf\bigg( \frac{\sqrt[4]{k}\,z(t,y)}{\sqrt{2}}\bigg) - H(y-a(t)) 
    \Big|
    \\
    &\leq 
    \sqrt{\frac{2}{\pi}}
    e^{    -\sqrt{k} \,\big(\frac{d}{16}\big)^2  \big(\frac{|U_s''(a_0)|}{2} \big)^{1/2}}
    +
    \sqrt[4]{k}\,|U_s''(a) | d 
    \frac{13}{|U_s''(a_0)|^\frac{1}{4}} 
    \frac{1}{\sqrt[4]{k}\,d}
    \,
    e^{    -\sqrt{k} \,\big(\frac{d}{16}\big)^2  \big(\frac{|U_s''(a_0)|}{2} \big)^{1/2}}
    \\
    &
    \leq 
    13
    \Big( 1 +  \| U_s^{(3)} \|_{L^1} \Big)  e^{    -\sqrt{k} \,\big(\frac{d}{16}\big)^2  \big(\frac{|U_s''(a_0)|}{2} \big)^{1/2}},
\end{align*}
which implies in particular the inequality of \eqref{eq:ineq-for-remainders-second-family} for $j = 2$. Finally, for $j = 3$, we use that $\Upsilon''(\zeta) = 1+\erf ( \zeta/ \sqrt{2}) $ so that we gather
\begin{align*}
    \| \mathcal{R}_{3,k}^2(t)  \|_{L^2}
    &= 
    3
    \beta(t)
    \bigg(
    \int_0^\infty 
    \Big|\,
        \big(  
        \Upsilon''(\sqrt[4]{k}\,z(t,y))-2 H(y-a(t)) 
        \Big)  
        \chi'(y) 
    \,\big|^2
    dy 
    \bigg)^\frac{1}{2}
    \\
    &\leq 
    3
    \sqrt{\frac{|\partial_y u_s(t,a(t))|}{2}}
    \big\|
        \Upsilon''(\sqrt[4]{k}\,z(t,\cdot ))-2 H(\cdot -a(t)) 
    \big\|_{L^\infty(K)} 
     \|  \chi' \|_{L^2}
     \\
     &\leq 
     3 \sqrt{|U_s''(a_0)|}\,
     \max_{y \in K } 
     \bigg| \, 
        \erf\Big( \frac{\sqrt[4]{k} \, z(t,y)}{\sqrt{2}} \Big) - 
        H(y-a(t)) \, 
    \bigg|.
\end{align*}
We hence apply the inequality in \eqref{eq:erf-H-estimate} to obtain
\begin{align*}
    \| \mathcal{R}_{3,k}^2(t)  \|_{L^2}
    &\leq 39  |U_s''(a_0)|^\frac{3}{4}
    \frac{1}{\sqrt[4]{k}\,d}
    \,
    e^{    -\sqrt{k} \,\big(\frac{d}{16}\big)^2  \big(\frac{|U_s''(a_0)|}{2} \big)^{1/2}}
    \\
    &\leq 
    20\Big( 1+ \frac{2}{|U_s''(a_0)|}\Big) 
    |U_s''(a_0)|^\frac{7}{4}
    \frac{1}{d}
    \,
    e^{    -\sqrt{k} \,\big(\frac{d}{16}\big)^2  \big(\frac{|U_s''(a_0)|}{2} \big)^{1/2}}
    \\
    &\leq 
    \frac{k^\frac{3}{4}}{d}
    e^{    -\sqrt{k} \,\big(\frac{d}{16}\big)^2  \big(\frac{|U_s''(a_0)|}{2} \big)^{1/2}}
    24\| \chi \|_{H^3} 
    \Big( 1+ \frac{2}{|U_s''(a_0)|}\Big) 
    \Big( 1+ \|U_s\|_{W^{4,1}} \Big)^2 (1+d)^4, 
\end{align*}
which is indeed \eqref{eq:ineq-for-remainders-second-family} for $j = 3$. This concludes the proof of \Cref{prop:estimates-remainders-and-uf}, Part $(ii)$.
\end{proof}

\subsection{A lower bound for the initial profile.}
We now address \Cref{prop:estimates-remainders-and-uf}, Part~(iii), which provides a lower bound for the $L^2$-norm of the forced solution $u_k^{\rm fr}$. By construction, the cutoff function~$\chi$ is identically zero on the interval $[a_0 + \tfrac{d}{2}, +\infty)$. Consequently, the inner profile~$\phi_{\text{inn}, k}$, as defined in \Cref{def:inner-outer-profiles}, is also zero identically in this region.

\noindent 
As a result, for all $(t,y) \in [0, T_{\rm max}] \times [a_0 + \tfrac{d}{2}, +\infty)$,
\begin{equation*}
     u_k^{\rm fr}(t,y) = \partial_y \phi_{\text{out}, k}(t,y) 
     = \partial_y u_s(t,y) e^{\int_0^t \sigma_k(\tau) d\tau},
\end{equation*}
and the following $L^2$-norm lower bound holds:
\begin{equation*}
    \| u_k^{\rm fr}(t) \|_{L^2(a_0+\frac{d}{2}, +\infty)} 
    \geq \frac{1}{2} \| U_s' \|_{L^2(a_0+\frac{d}{2}, +\infty)} 
    \exp\bigg( \int_0^t {\rm Re} \,\sigma_k(\tau) d\tau \bigg),
\end{equation*}
where we have exploited the assumption on $T_{\rm max} > 0$ given by \eqref{eq:assumption-for-Tmax-and-partialyus}. The result follows by the definition of $\sigma_k(t)$ always in \Cref{def:inner-outer-profiles}, which yields
\begin{align*}
    {\rm Re}\, \sigma_k(t) 
    &= 
    {\rm Re}\, \bigg(  \sqrt{k} \,e^{-\frac{\im \pi}{4}}  \sqrt{\frac{|\partial_y^2u_s(t,a(t))|}{2}} - 
        \im k \, u_s( t,a(t)) \bigg)
    \\
    &= 
     \sqrt{k} \,\frac{\sqrt{|\partial_y^2u_s(t,a(t))|}}{2} 
     \geq \sqrt{k} \, \frac{|U_s''(a_0)|^\frac{1}{2}}{2\sqrt{2}},
\end{align*}
as assumed in \eqref{eq:assumptions-for-Tmax}. This completes the proof of \Cref{prop:estimates-remainders-and-uf}, Part~(iii).

\subsection{An upper bound for the initial profile.} 
We conclude the proof of  \Cref{prop:estimates-remainders-and-uf} with Part $(iv)$, concerning the upper bound of the $H^m_\lambda$-norm of $u_k^{\rm fr}(0,y) = \partial_y \phi_{\text{inn}, k}(0,y) + \partial_y \phi_{\text{out}, k}(0,y)$ in $y\in \mathbb R_+$. 

\smallskip 
\noindent 
The component that grows polynomially in $k\in \mathbb N$ is $ \partial_y \phi_{\text{inn}, k}$ and most of its norm can be estimated using the following inequality for the $n$-th derivative of $\Upsilon$:
\begin{equation}\label{eq:ineq-Ups-n-derivative-prop}
        \max_{a_0-\frac{d}{2}\leq y \leq a_0 + \frac{d}{2}} 
        \big|\, \Upsilon^{(n)}(\sqrt[4]{k}\, z(t,y) )\, \big|
        \leq 
        (n-3)! \, 2\big(\, 1 + |\,U_s''(a_0)\,|^\frac{n-3}{4} d^{n-3}\,\big) k^\frac{n-3}{4},
\end{equation}
for any $n \in \mathbb N$ with $n \geq 3$ and any $t\in [0, T_{\rm max}]$. Indeed, the third derivative of $\Upsilon$ is the Gaussian function $\Upsilon^{(3)}(\zeta) = \sqrt{\tfrac{2}{\pi}}e^{-\frac{\zeta^2}{2}}$. Consequently, any higher-order derivative  $\Upsilon^{(n)}(\zeta)$ with $n> 3$  can be bounded (locally in $\zeta \in \mathbb C$) using the Hermite polynomial ${\rm He}_{n-3}(\zeta)$ of degree $n-3$. The proof of \eqref{eq:ineq-Ups-n-derivative-prop} is postponed to \Cref{lemma:appx-est-Upsilon-nth-derivative} in the Appendix, while we focus on its application to   $\partial_y \phi_{\text{inn}, k}$ at time $t = 0$.

\smallskip\noindent 
Using the \Cref{def:inner-outer-profiles},  we apply the Leibniz rule for the $(j+1)$-th derivative of the product with  $j \in \{0, \dots, m\}$:
\begin{align*}
    \partial_y^{j+1}
    \phi_{\text{inn}, k}(0,y) 
    &=
    \partial_y^{j+1}
    \bigg[ 
        \chi(y) \Upsilon \big( \sqrt[4]{k}\,z(0,y) \big)
    \bigg]
    \\
    &= 
    \sum_{n = 0}^{j+1} 
    \binom{j+1}{n} \chi^{(j+1-n)}(y)
    \,
    \partial_y^{n}
    \bigg[ \Upsilon\Big(\, \sqrt[4]{k}\,z(0,y) \,\Big) \bigg]
    \\
    &= 
    \sum_{n = 0}^{j+1} 
    \binom{j+1}{n} \chi^{(j+1-n)}(y)
    \,
    \Upsilon^{(n)}\big( \sqrt[4]{k}\,z(0,y) \big) 
    \, k^\frac{n}{4}
    \bigg(\frac{|U_s''(a_0)|}{2}\bigg)^\frac{n}{4} e^{-\frac{i n \pi}{8}}
\end{align*}
where we used $z(0,y) = \gamma(0)\, (y-a(0))\,e^{-\frac{i\pi}{8}}= ( |U_s''(a_0)|/2)^{1/4}(y-a_0)\,e^{-\frac{i\pi}{8}}$. For the $n$-th derivatives with $n \leq 2$, we apply the inequality 
\eqref{eq:ineq-in-lemma-us-Upsilon}, while for $n\geq 3$ we use \eqref{eq:ineq-Ups-n-derivative-prop}:
\begin{align*}
    \big| \partial_y^{j+1} \phi_{\text{inn}, k}(0,y) \big|
    \leq 
    \sum_{n = 0}^{\min\{ 2, j+1\}}
    \binom{j+1}{n} \big| \chi^{(j+1-n)}(y)\big| 
    4 \Big( 1 + |U_s''(a_0)|^{\frac{2-n}{4}} d^{2-n} \Big)
    k^{\frac{2-n}{4}}k^\frac{n}{4}\bigg(\frac{|U_s''(a_0)|}{2}\bigg)^\frac{n}{4} + 
    \\
    +
    \sum_{n = 3}^{j+1}
    \binom{j+1}{n}
    |\chi^{(j+1-n)}(y)|
    (n-3)! \, 2\big(\, 1 + |\,U_s''(a_0)\,|^\frac{n-3}{4} d^{n-3}\,\big) k^\frac{n-3}{4} 
    k^\frac{n}{4}\bigg(\frac{|U_s''(a_0)|}{2}\bigg)^\frac{n}{4},
\end{align*}
where the second sum is simply zero, if $j \leq 1$. Hence, for any  $j \in \{0, \dots, m\}$ and any $y \in \mathbb R$, we obtain
\begin{equation*}
    e^{\lambda y}
    \big| \partial_y^{j+1} \phi_{\text{inn}, k}(0,y) \big|
    \leq 
    k^{\frac{2m-1}{4} }
    e^{2 \lambda a_0}
    (m-2)! \,2 ( 1 + d )^m
    \Big( 1 + |U_s''(a_0)| \Big)^\frac{m}{4}
    \sum_{n =0}^{j+1} \binom{j+1}{n} | \chi^{(j+1-n)}(y)|,
\end{equation*}
where we used that $\chi$ has support in $[a_0-\tfrac{d}{2}, \, a_0 + \tfrac{d}{2}]$. This implies that the $H^m_\lambda$-norm of $\partial_y \phi_{\text{inn}, k}$ satisfies 
\begin{align*}
    \| \partial_y \phi_{\text{inn}, k} (0)\|_{H^m_\lambda}
    &=
    \sum_{j = 0}^m \| e^{\lambda y} \partial_y^{j+1} \phi_{\text{inn}, k}(0) \|_{L^2(\mathbb R_+)}
    \\
    &\leq 
    k^{\frac{2m-1}{4} }
    e^{2 \lambda a_0}
    (m-2)! \,2 ( 1 + d )^m
    \Big( 1 + |U_s''(a_0)| \Big)^\frac{m}{4}
    \| \chi \|_{H^m}
    \sum_{j = 0}^m 2^{j+1}
    \\
    &\leq 
    k^{\frac{2m-1}{4} }
    e^{2 \lambda a_0}
    (m-1)! \, ( 1 + d )^m
    \Big( 1 + \|U_s\|_{W^{3,\infty}} \Big)^\frac{m}{4}
    \| \chi \|_{W^{m+1,\infty}}\,d\,
    2^{m+2}
    ,
\end{align*}
which yields
\begin{equation}\label{eq:estimate-from-above-phiinn-in-Hm}
    \| \partial_y \phi_{\text{inn}, k} (0)\|_{H^m_\lambda}
    \leq
    k^{\frac{2m-1}{4} }
    e^{2 \lambda a_0} (m-1)! 
    \,2^{m+2} ( 1 + d )^{m+1}
    \Big( 1 + \|U_s\|_{H^m_\lambda}+\|U_s\|_{W^{3,1}} \Big)^m
    \Big( 1 + \|\chi \|_{W^{m+1,\infty}} \Big)
\end{equation}
We now deal with the $H^m_\lambda$-norm of the outer layer $\partial_y \phi_{\text{out}, k}$ at $t =0$, which is indeed $\mathcal{O}(1)$ in the frequencies $k\in \mathbb N$. Indeed, from \Cref{def:inner-outer-profiles}, we remark that
\begin{align*}
    \partial_y^{j+1} \phi_{\text{inn}, k} (0, y) 
    =
    \partial_y^{j+1}
    \Bigg[ \big( 1- \chi(y) \big) H(y-a_0)
    \bigg( 
        U_s(y)-U_s(a_0) + 
        e^{i\frac{5\pi}{4}}\sqrt{\frac{|U_s''(a_0)|}{2k}}
    \bigg) \Bigg]
    \\
    =
    (1-\chi(y)) H(y-a_0) U_s^{(j+1)}(y)  
    - 
    \sum_{n=1}^{j}
    \binom{j+1}{n}
    \chi^{(j+1-n)}(y)  
    H(y-a_0)
    U_s^{(n)}(y)
    \,+\\
    -\,
    \chi^{(j+1)}(y)H(y-a_0)
    \bigg( 
        U_s(y)-U_s(a_0) + 
        e^{i\frac{5\pi}{4}}\sqrt{\frac{|U_s''(a_0)|}{2k}}
    \bigg).
\end{align*}
This implies that for any $j\in \{ 0, \dots, m\}$
\begin{align*}
    \| e^{\lambda y} \partial_y^{j+1} \phi_{\text{out}, k}(0) \|_{L^2(\mathbb R_+)}
    \leq
    \Big( 1 + \| \chi \|_{W^{m+1,\infty}}\Big) 
    \Big( \| U_s \|_{H^m_\lambda} + \| U_s' \|_{L^1} + \| U_s^{(3)} \|_{L^1}^\frac{1}{2}\Big)
    \sum_{n=0}^{j+1} \binom{j+1}{n},
\end{align*}
which yields  the estimate
\begin{equation*}
    \| e^{\lambda y}  \partial_y^{j+1} \phi_{\text{out}, k} (0) \|_{L^2} 
    \leq 
    2^{j+1}
    \Big( 1 + \| \chi \|_{W^{m+1, \infty}} \Big)
    \Big( 1 + \| U_s \|_{H^{m+1}_\lambda} + \|U_s \|_{W^{3,1}}   \Big)^m.
\end{equation*}
Taking the sum
\begin{align*}
    \| \partial_y \phi_{\text{inn}, k} (0) \|_{H^m_\lambda} 
    &\leq 
    2^{m+2}
    \Big( 1 + \| U_s \|_{H^{m+1}_\lambda} + \|U_s \|_{W^{3,1}}   \Big)^m
    \Big( 1 + \| \chi \|_{W^{m+1, \infty}} \Big)
    \\
    &\leq
    k^{\frac{2m-1}{4} }
    e^{2 \lambda a_0} 
    \,2^{m+2} ( 1 + d )^{m+1}
    \Big( 1 + \|U_s\|_{H^m_\lambda}+\|U_s\|_{W^{3,1}} \Big)^m
    \Big( 1 + \|\chi \|_{W^{m+1,\infty}} \Big).
\end{align*}
Summing the last inequality together with \eqref{eq:estimate-from-above-phiinn-in-Hm}
concludes the proof of \Cref{prop:estimates-remainders-and-uf}.

\section{Correcting the perturbed solution}\label{sec:correcting-the-perturbed-solution}

\noindent 
Combining \Cref{def:inner-outer-profiles}, \Cref{prop:remainder-exact-form} and \Cref{prop:estimates-remainders-and-uf}, we have constructed a forced quasi-eigenmode solution $u(t,x,y) = u_k^{\rm fr}(t,y)e^{i kx}$ to the linearised Prandtl system \eqref{eq:lin-Prandtl}, which grows as $e^{\sigma_0 \sqrt{k} \,t}$ (cf.~\eqref{eq:ForcedSolutionLowerBound}). To extend this growth to an exact solution, we need to subtract a corrector $\delta u_k(t,y)e^{i k x}$ with same forcing term of $u_k^{\rm fr}$ but zero initial data. Naturally, we shall also establish some appropriate upper bounds of $\delta u_k$, as detailed in the next proposition. We recall that, by \Cref{prop:remainder-exact-form}, the forcing term $ f_k$ belongs to $\mathcal{C}^\infty_c([0, T_{\rm max}]\times \mathbb R_+, \mathbb C)$. 
\begin{prop}\label{prop:correction}
For any $k\in \mathbb N$, there exists a unique $\delta u_k : [0, T_{\rm max}] \times \mathbb R_+ \to \mathbb C$ in the function space
\begin{equation}\label{eq:function-space-for-deltauk}
    \delta u_k \in \mathcal{C}([0, T_{\rm max}], H^1_0(\mathbb R_+, \mathbb C)), 
    \quad 
    \text{with}\quad 
    \partial_t \delta u_k,\; \partial_y^2 \delta u_k \in  L^2((0, T_{\rm max}) \times \mathbb R_+, \mathbb C)
\end{equation}
that satisfies both $\delta u_{k }|_{t= 0 }\equiv 0$ as well as, for a.e.~$(t,y) \in (0, T_{\rm max}) \times \mathbb R_+$, the pointwise identity
\begin{equation}\label{eq:equation-for-deltauk}
    \partial_t  \delta u_k (t,y)+ i k\, u_s(t,y) \delta u_k (t,y)  - \partial_y^2  \delta u_k (t,y)  - i k\, \partial_y u_s(t,y) \int_0^y \delta u_k (t,\omega) d\omega = f_k(t,y) \, e^{\int_0^t \sigma_k(\tau) d\tau }.
\end{equation}
Moreover,  the $L^2(\mathbb R_+, \mathbb C)$-norm of $\delta u_k(t,\cdot)$ is bounded at any time $t\in [0, T_{\rm max}]$ by
\begin{equation}\label{eq:ineq-deltauk}
    \|\, \delta u_k(t) \,\|_{L^2} \leq 
    2 \bigg( 1 + k \,\frac{e^{2\lambda^2 t}} { \lambda } \| \,U_s' \,\|_{L^2_\lambda }  \bigg)
    \int_0^ t \big\| \,f_k(\tau  ) e^{\int_0^\tau \sigma_k(l) dl}  \big\|_{L^2}
    e^{ (t-\tau) \sqrt{k} (1+2\|U_s''\|_{L^\infty} )}
    \,d\tau. 
\end{equation}
\end{prop}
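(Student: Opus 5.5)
The plan is to separate the well-posedness of \eqref{eq:equation-for-deltauk} from the quantitative bound \eqref{eq:ineq-deltauk}. The $\sqrt{k}$-rate in \eqref{eq:ineq-deltauk} is the main obstacle, and the last step below is only a route I would try first, not something I have checked.

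\emph{Existence and uniqueness.} For fixed $k$, \eqref{eq:equation-for-deltauk} is the Dirichlet heat equation on $\mathbb R_+$ perturbed by the zeroth-order operator
\begin{equation*}
\mathcal B(t)w := \im k\, u_s w - \im k\,\partial_y u_s \int_0^y w .
\end{equation*}
I would show that $\mathcal B(t)$ is bounded on $L^2(\mathbb R_+)$, uniformly in $t\in[0,T_{\rm max}]$. The first term is bounded since $\|u_s\|_{L^\infty}\le\|U_s\|_{L^\infty}$ by the maximum principle. For the second, use $|\int_0^y w|\le \sqrt{y}\,\|w\|_{L^2}$ and $\|\sqrt{y}\,\partial_y u_s(t)\|_{L^2}\lesssim \lambda^{-1}\|\partial_y u_s(t)\|_{L^2_\lambda}$. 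The source $f_k e^{\int_0^t\sigma_k}$ is smooth and compactly supported by \Cref{prop:remainder-exact-form}. A Galerkin scheme, or equivalently a Duhamel fixed point around the Dirichlet heat semigroup, then gives a unique solution with zero data in the maximal-regularity class \eqref{eq:function-space-for-deltauk}. Uniqueness follows from the energy estimate below applied to the difference of two solutions.

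\emph{Energy identity.} I would multiply \eqref{eq:equation-for-deltauk} by $\overline{\delta u_k}$, integrate over $\mathbb R_+$ and take real parts. The transport term $\im k\,u_s|\delta u_k|^2$ is purely imaginary and drops out, and the diffusion term yields $\|\partial_y\delta u_k\|_{L^2}^2$. Writing $\psi:=\int_0^y\delta u_k$, so that $\psi'=\delta u_k$, this gives
\begin{equation*}
\tfrac12\tfrac{d}{dt}\|\delta u_k\|_{L^2}^2+\|\partial_y\delta u_k\|_{L^2}^2
\le k\,\Big|{\rm Im}\int_0^\infty \partial_y u_s\,\psi\,\overline{\psi'}\Big| + \|f_k e^{\int_0^t\sigma_k}\|_{L^2}\|\delta u_k\|_{L^2}.
\end{equation*}
A direct Cauchy--Schwarz bound on the nonlocal term would only give growth $e^{Ckt}$, which is far too weak.

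\emph{Main obstacle: the nonlocal term.} Here the rate $\sqrt{k}(1+2\|U_s''\|_{L^\infty})$ and the prefactor $k\,e^{2\lambda^2 t}\lambda^{-1}\|U_s'\|_{L^2_\lambda}$ must be produced separately. I would split $\delta u_k = v + w$ as follows.
\begin{itemize}
\item[(1)] The part $v$ is obtained by running the energy argument at the level of the stream function $\psi$. The $\psi$-equation comes from integrating \eqref{eq:equation-for-deltauk} in $y$, and the boundary term at $y=0$ vanishes. In it, the $k$-terms combine into $\im k(u_s\psi'-\partial_y u_s\,\psi)$. Integrating by parts, the symmetric part of $k\int\partial_y u_s\,\psi\overline{\psi'}$ reduces to $-\tfrac k2\int \partial_y^2u_s|\psi|^2$, which is controlled by $\|U_s''\|_{L^\infty}$. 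The remaining $k$-terms I would absorb into the dissipation via Young's inequality at scale $k^{-1/4}$, meaning I balance $\|\partial_y\cdot\|^2$ against $\sqrt{k}\,\|\cdot\|^2$. This is meant to give a propagator bound of the form $e^{(t-\tau)\sqrt{k}(1+2\|U_s''\|_{L^\infty})}$, as anticipated in \eqref{intro:upper-bound-of-Tk}.
\item[(2)] The residual $w$ collects the contribution that step (1) cannot absorb. I would treat it as a forcing of size $k\,\|\partial_y u_s(t)\|_{L^2_\lambda}\,\|e^{-\lambda y}\psi\|_{L^\infty}$. The weighted norm propagates as $\|\partial_y u_s(t)\|_{L^2_\lambda}\le e^{2\lambda^2 t}\|U_s'\|_{L^2_\lambda}$, by the weighted estimate for the heat equation in \Cref{lemma:Green-formula-in-Hnmu}. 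The factor $\lambda^{-1}$ comes from $\int_0^\infty e^{-2\lambda y}\,dy$. Feeding this forcing back through the bound from step (1) gives the factor $2\big(1+k\,e^{2\lambda^2t}\lambda^{-1}\|U_s'\|_{L^2_\lambda}\big)$.
\end{itemize}
Finally, a Gr\"onwall argument on $\|\delta u_k(t)\|_{L^2}$, with zero initial datum, yields \eqref{eq:ineq-deltauk}. The delicate point to verify is that the absorption in step (1) really yields $\sqrt{k}$ rather than $k$. This requires using that the $k$-terms are skew up to $\partial_y^2u_s$ contributions, and not merely bounding them in modulus.
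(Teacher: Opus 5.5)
\textbf{Verdict: genuine gap.} Your existence and uniqueness argument is fine and is essentially what the paper does: a bounded zeroth-order perturbation of the Dirichlet heat flow, closed by a fixed point. The gap is exactly the step you flag as delicate, and the mechanism you propose there cannot work.

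\textbf{Why your step (1) cannot give the rate $\sqrt{k}$.} In your energy identity the nonlocal term enters only through the antisymmetric form $-k\,{\rm Im}\int_0^\infty\partial_y u_s\,\psi\,\overline{\psi'}$. The real part, which integrates by parts to $-\tfrac12\int\partial_y^2u_s|\psi|^2$, does not occur. This form contains no $y$-derivative of $\delta u_k$, so Young's inequality against $\|\partial_y\delta u_k\|_{L^2}^2$ gains nothing. Moreover the form is indefinite and of size one on $k$-independent profiles. For $\psi=\phi_1+\im\phi_2$ with $\phi_j\in\mathcal C^\infty_c(\mathbb R_+)$ it equals $2\int\partial_y u_s\,\phi_1'\phi_2+\int\partial_y^2u_s\,\phi_1\phi_2$. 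Taking $\phi_j$ supported where $\partial_y u_s(\tau,\cdot)\neq0$, it can be made equal to $-c_0<0$. For the datum $g=\psi'$, the right derivative at $t=\tau$ of $\|T_k(\tau,t)g\|_{L^2}^2$ is then $2c_0k-2\|g'\|_{L^2}^2$. A bound $\|T_k(\tau,t)\|_{\mathcal L(L^2)}\le e^{C\sqrt{k}(t-\tau)}$ with $C$ independent of $k$ would force this derivative to be at most $2C\sqrt{k}\|g\|_{L^2}^2$, which fails for large $k$. So a Gr\"onwall argument directly on $\|\delta u_k\|_{L^2}$ cannot give the rate $\sqrt{k}$. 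Note also that \eqref{intro:upper-bound-of-Tk} is only a heuristic simplification; what is actually proved carries the $k$-dependent prefactor of \eqref{eq:ineq-deltauk}.

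\textbf{Why the primitive $\psi=\int_0^y\delta u_k$ does not help.}
\begin{itemize}
\item It tends to $\int_0^\infty\delta u_k$ as $y\to\infty$, so it is not in $L^2(\mathbb R_+)$ in general.
\item Integrating \eqref{eq:equation-for-deltauk} in $y$ leaves the boundary term $\partial_y\delta u_k(t,0)$, which does not vanish in general, and a term $-2\im k\int_0^y\partial_y u_s\,\psi$ that is still nonlocal.
\item A residual forced by $k\,\partial_y u_s\,\psi$ with this $\psi$ is controlled by $\delta u_k$ itself. Feeding it back through Gr\"onwall regenerates a rate of order $k$.
\end{itemize}

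\textbf{The missing idea.} The paper uses the change of unknowns of \cite{MR3925144}. Here $\psi_k$ is not the primitive of $\delta u_k$ but the solution of the auxiliary problem
\begin{equation*}
\big(\partial_t+\im k\,u_s-\partial_y^2\big)\psi_k+\im k\,\partial_y u_s\int_0^y\psi_k=\delta u_k,\qquad \psi_k|_{t=0}=0,\qquad \partial_y\psi_k|_{y=0}=0 .
\end{equation*}
Thanks to the Neumann condition, integrating in $y$ shows that $\int_0^y\psi_k$ solves the local equation $\big(\partial_t+\im k\,u_s-\partial_y^2\big)\int_0^y\psi_k=\int_0^y\delta u_k$. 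Set $\Psi_k:=\delta u_k-\im k\,\partial_y u_s\int_0^y\psi_k$. The two nonlocal terms cancel, and since $(\partial_t-\partial_y^2)\partial_y u_s=0$, only a commutator survives:
\begin{equation*}
\big(\partial_t+\im k\,u_s-\partial_y^2\big)\Psi_k=2\im k\,\partial_y^2u_s\,\psi_k+f_k\,e^{\int_0^t\sigma_k(s)\,ds},\qquad \big(\partial_t+\im k\,u_s-\partial_y^2\big)\psi_k=\Psi_k .
\end{equation*}
The pair $(\Psi_k,\sqrt{k}\,\psi_k)$ therefore solves a local heat system with skew transport and zeroth-order coupling $\sqrt{k}\,\mathcal M$, where $\mathcal M=\begin{pmatrix}0&2\im\,\partial_y^2u_s\\ 1&0\end{pmatrix}$. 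The rescaling by $\sqrt{k}$ balances the off-diagonal entries $k\,\partial_y^2u_s$ and $1$. The plain $L^2$ energy of the pair plus Gr\"onwall then yields the kernel $e^{(t-\tau)\sqrt{k}(1+2\|U_s''\|_{L^\infty})}$.

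The prefactor comes from the pointwise-in-time reconstruction $\delta u_k=\Psi_k+\im k\,\partial_y u_s\int_0^y\psi_k$. One uses $\|\partial_y u_s(t)\int_0^y\psi_k\|_{L^2}\le C_\lambda\|\partial_y u_s(t)\|_{L^2_\lambda}\|\psi_k\|_{L^2}$ together with \Cref{lemma:Green-formula-in-Hnmu}. The prefactor thus multiplies the Duhamel integral rather than being fed back into it. Your guesses about the shape of this prefactor and the role of $\partial_y^2u_s$ are on target. Without this auxiliary unknown, however, the argument does not close.
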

\begin{remark}
The equation \eqref{eq:equation-for-deltauk} for $\delta u_k$ is linear and non-homogeneous, with the heat equation as its leading term under Dirichlet boundary conditions. Existence and uniqueness of a solution $\delta u_k$ in \eqref{eq:function-space-for-deltauk} then follow from a standard fixed-point argument, relying also on the a-priori estimates:
\begin{equation}\label{eq:control-of-int0y-deltauk}
\begin{aligned}
    \int_0^\infty \Big| \im k \,\partial_y u_s(t,y) \int_0^y \delta u_k(t, \omega ) d \omega  \Big|^2 dy 
    &\leq 
    k^2 
    \int_0^\infty \big|\,\partial_y u_s(t,y) \,y \,\big|^2 dy \, 
    \|\delta u_k(t) \|_{L^2}^2 
    \\
    &\leq \frac{k^2}{\lambda^2 }
    \int_0^\infty \big|\,\partial_y u_s(t,y)   \big|^2 e^{2\lambda y} dy \,
    \| \delta u_k (t ) \|_{L^2}^2
    \\
    &\leq 
    \frac{k^2 }{\lambda^2 } e^{4\lambda^2 t}\| U_s' \|_{L^2_\lambda}^2 \| \delta u_k (t ) \|_{L^2}^2.
\end{aligned}
\end{equation}
A similar a-priori estimate holds also for the term $i k \,u_s(t,\cdot ) \delta u_k(t,\cdot )$ in $L^2(\mathbb R_+, \mathbb C)$. We shall therefore focus on the less-trivial inequality \eqref{eq:ineq-deltauk}.

\end{remark}
\begin{proof}[Proof of inequality \eqref{eq:ineq-deltauk}]
 We proceed with a similar ansatz as in \cite{MR3925144} and introduce some auxiliary functions for our estimates. We first consider the unique function
\begin{equation*}
    \psi_k \in \mathcal{C}([0, T_{\rm max}], H^1(\mathbb R_+, \mathbb C)),\quad \text{with}\quad 
    \partial_y \psi_k \in L^2(0, T_{\rm max}, H^1_0(\mathbb R_+, \mathbb C))
    \quad \text{and}\quad 
    \partial_t \psi_k  \in L^2( (0, T_{\rm max}) \times \mathbb R_+, \mathbb C),
\end{equation*}
that satisfies both $\psi_{k}|_{t = 0} \equiv 0$ and the pointwise identity, for a.e.~$(t,y) \in (0, T_{\rm max}) \times \mathbb R_+$,
\begin{equation*}
    \partial_t  \psi_k(t,y) + i k\, u_s(t,y)\, \psi_k(t,y)   - \partial_y^2  \psi_k(t,y) 
    +
    ik \,\partial_y u_s(t,y) \int_0^y \psi_k(t,\omega) d\omega  = \delta u_k(t,y).
\end{equation*}
Since $\delta u_k \in L^2((0, T_{\rm max}) \times \mathbb R_+, \mathbb C)$, the existence and uniqueness of $\psi_k$ follow once more from a standard fixed-point argument, now with the heat equation with homogeneous Neumann boundary conditions as the leading operator. We remark in particular that the following identity is satisfied pointwise for a.e.~$(t,y) \in (0, T_{\rm max}) \times \mathbb R_+$:
\begin{align*}
   \Big( \partial_t  +  i k \, u_s(t,y)  - \partial_y^2 \Big) \int_0^y \psi_k(t,\omega) d \omega = 
   \int_0^y \partial_t \psi_k(t, \omega) d\omega + 
   ik \,u_s(t,y)  \int_0^y   \psi_k(t,\omega)   d  \omega - \partial_y \psi_k(t,y) 
   \\
   = 
    \int_0^y \big( \partial_t -\partial_\omega^2 \big) \psi_k (t, \omega) d\omega
    + i k \int_0^y \partial_\omega \Big( u_s(t, \omega) \int_0^\omega \psi_k(t, l) dl \Big) d\omega = 0 .
\end{align*}
Next, we consider a second auxiliary function $\Psi_k\in \mathcal{C}([0, T_{\rm max}], H^1_0(\mathbb R_+, \mathbb C))$, with weak derivatives  $\partial_t \Psi_k, \partial_y^2 \Psi_k$ in  $L^2((0, T_{\rm max}) \times  \mathbb R_+, \mathbb C)$, defined by
\begin{equation}\label{eq:def-Psik}
    \Psi_k (t,y) := \delta u_k(t,y)  - i k\,
    \partial_y u_s(t,y) \int_0^y \psi_k(t, \omega ) d\omega,\qquad \text{for a.e.}\quad (t,y) \in (0, T_{\rm max}) \times \mathbb R_+.
\end{equation}
Indeed, $\delta u_k$ belongs to the same function space as $\Psi_k$, while for the second term of the sum, we exploit the fast decay of $u_s \in \mathcal{C}([0, T_{\rm max}], H^{m+1}_\lambda(\mathbb{R}_+))$, with $m \geq 3$, which also satisfies the heat equation. Moreover, we note that $\Psi_k |_{ t= 0}  \equiv  0 $ and $\Psi_k$ satisfies the following pointwise identity for a.e.~$(t,y)\in (0, T_{\rm max}) \times \mathbb R_+$:
\begin{align*}
    \Big( \partial_t &+ i k \, u_s(t,y) - \partial_y^2 \Big) \Psi_k (t, y) 
    = f_k(t,y) e^{\int_0^t \sigma_k(\tau) d\tau } + 
    i k\, \partial_y u_s(t,y) \int_0^y \delta u_k (t,\omega) d\omega 
    \, +
    \\
    &- i k \underbrace{ \big( \partial_t - \partial_y^2 \big) u_s(t,y)}_{=0} \int_0^y \psi_k(t, \omega ) d\omega
    - \underbrace{ \Big( \partial_t + i k \, u_s(t,y) - \partial_y^2 \Big)\int_0^y \psi_k(t, \omega ) d\omega}_{=0} - 2 i k \,\partial_y u_s(t,y) \psi_k(t,y). 
\end{align*}
In sum, the couple $(\Psi_k, \sqrt{k} \, \psi_k)$ satisfies pointwise, for  a.e.~in $(0, T_{\rm max})\times \mathbb R_+$, the following linear system:
\begin{equation}\label{eq:system-of-psik-and-Psik}
    \Big( \partial_t + i k \, u_s(t,y) - \partial_y^2 \Big)
    \begin{pmatrix}
        \Psi_k(t,y) \\ \sqrt{k} \, \psi_k (t,y)
    \end{pmatrix} 
    = 
    \sqrt{k} 
\begin{pmatrix}
        0 & 2i  \, \partial_y^2 u_s(t,y)  \\ 1 & 0
    \end{pmatrix}
    \begin{pmatrix}
        \Psi_k (t,y)\\ \sqrt{k} \, \psi_k (t,y)
    \end{pmatrix} 
    +
    \begin{pmatrix}
        1 \\ 0 
    \end{pmatrix} f_k(t,y)  e^{\int_0^t \sigma_k(\tau) d\tau }.
\end{equation}
Each summand and derivative in the above identity belongs at least to $L^2((0, T_{\rm max}) \times \mathbb{R}_+)$. We remark, moreover, that for any complex vector $(\xi_1, \xi_2) \in \mathbb C^2$ and any 
$(t,y) \in (0, T_{\rm max})\times \mathbb R_+$
\begin{equation}\label{eq:est-for-Mxi1xi2}
    \mathcal{M}(t,y)
    :=
    \begin{pmatrix}
        0 & 2i  \, \partial_y^2 u_s(t,y)  \\ 1 & 0
    \end{pmatrix} 
    \quad \Longrightarrow \quad 
    \bigg| 
    \begin{pmatrix}
        \overline{\xi_1} \\ \overline{\xi_2}
    \end{pmatrix} \cdot 
    \mathcal{M}(t,y)
    \begin{pmatrix}
        \xi_1 \\  \xi_2
    \end{pmatrix}
    \bigg| 
    \leq 
    \Big( 1 + 2 \| \,U_s ''\, \|_{L^\infty} \Big) 
    \bigg( \frac{|\xi_1|^2}{2} + \frac{|\xi_2|^2}{2} \bigg).
\end{equation}
We hence introduce the absolutely continuous energy functional $\EE_k \in AC([0, T_{\rm max}])$ defined by:
\begin{alignat*}{4}
    \EE_k(t) &:=    \frac{1}{2}\|\, \Psi_k(t ) \,\|_{L^2 }^2 + 
    \frac{\sqrt{k}}{2}
    \| \,\psi_k(t ) \,\|_{L^2 }^2   ,\qquad 
    &&\text{for any } t \in [0, T_{\rm max}],\\
    \EE_k'(t) &=    
    \langle \, \Psi_k(t) ,\, \partial_t \Psi_k(t)  \,\rangle_{L^2 } + 
    \sqrt{k} 
    \langle  \,\psi_k(t) ,\, \partial_t \psi_k(t)  \,\rangle_{L^2 },\qquad 
    &&\text{for a.e.~} t \in [0, T_{\rm max}].
\end{alignat*}
Thanks to the $L^2$-regularity of both derivatives $\partial_t \psi_k, \, \partial_y^2 \psi_k$ and $\partial_t \Psi_k,\, \partial_y^2 \Psi_k$, multiplying the identity \eqref{eq:system-of-psik-and-Psik} by $(\overline{\Psi_k}, \sqrt{k} \, \overline{\psi_k})^T $, integrating over $(0, t) \times \mathbb R_+$ and taking the real part yields the energy identity:
\begin{align*}
    \EE_k&(t) - \EE_k(0)    
    + 
    \int_0^t \|\partial_y \Psi_k(s) \|_{L^2 }^2 ds   + 
    \sqrt{k} 
    \int_0^t 
    \| \partial_y  \psi_k(s) \,\|_{L^2 }^2 ds
    = 
    \\
    &=
    \sqrt{k} \,
    \mathfrak{Re}
    \bigg( 
    \int_0^t 
    \int_{\mathbb R_+}
    \begin{pmatrix}
        \overline{\Psi_k}  \\ \sqrt{k} \, \overline{\psi_k}
    \end{pmatrix}
    \cdot 
    \mathcal{M}  
    \begin{pmatrix}
        \Psi_k \\ \sqrt{k} \, \psi_k
    \end{pmatrix}
    (s,y)
    dyds
    \bigg) 
    +
    \mathfrak{Re}
    \bigg( 
    \int_0^t 
    \int_0^\infty
    \overline{\Psi_k } (s,y)f_k(s,y) e^{\int_0^s \sigma_k(\tau) d \tau} 
    dyds
    \bigg),
\end{align*}
which is satisfied for any $t\in [0, T_{\rm max}]$. In particular, thanks to \eqref{eq:est-for-Mxi1xi2} and the fact that $\EE_k(0) = 0$ by construction, we further obtain
\begin{equation*}
    \max_{0 \leq \tau \leq t } \EE_k(\tau)   
    \leq 
    \sqrt{k}
    \,(1 + 2 \| \,U_s'' \,\|_{L^\infty})
    \int_0^t \EE_k(\tau) d\tau  +  
    \sqrt{2} \, 
    \int_0^t
    \big\| \, f_k(\tau) e^{\int_0^\tau \sigma_k (l) d l } \,\big\|_{L^2} \sqrt{\EE_k(\tau)} d\tau. 
\end{equation*}
for any~$t\in [0, T_{\rm max}]$. We shall now remark that $\max\limits_{0 \leq s \leq t}\EE_k(s) >0 $ for any $t>0$. Indeed, if zero, this would imply that $\Psi_k$ and $\psi_k$ vanish identically on $[0, t] \times \mathbb{R}_+$, which in turn would force both $\delta u_k$ and the forcing term $f_k$ to be identically zero in the same domain (which is a contradiction for $f_k$). For any $t\in ]0, T_{\rm max}]$ we can divide the last inequality by $\max\limits_{0 \leq \tau \leq t}\sqrt{\EE_k(t)} >0 $, to obtain\vspace{-0.2cm}
\begin{equation*}
    \max\limits_{0 \leq \tau \leq t}\sqrt{\EE_k(\tau)}
    \leq 
    \sqrt{k} 
    \Big( \,1 +  2 \| \,U_s''\, \|_{L^\infty} \Big)
    \int_0^t \max_{ 0 \leq l \leq \tau} \sqrt{\EE_k(l)}  d\tau   +  
    \sqrt{2}
    \int_0^t 
    \big\| \, f_k(\tau) e^{\int_0^\tau \sigma_k (l) d l } \, \big\|_{L^2} d\tau.
\end{equation*}
This last relation is satisfied also at $t = 0$, given that $\EE_k(0) = 0$. We apply the Gronwall's inequality to obtain the estimate
\begin{equation}\label{eq:est-final-EEk(t)}
   \sqrt{\EE_k(t)}    
   \leq 
   \sqrt{2} 
   \int_0^t 
   \big\| \,f_k(\tau,\cdot ) e^{\int_0^\tau \sigma_k (l) d l } \,\big\|_{L^2}  e^{ (t-\tau) \sqrt{k} \big(1 + 2\|U_s''\|_{L^\infty} \big)} d\tau .
\end{equation}
for any $t \in [0, T_{\rm max}]$. Finally, recalling the relation \eqref{eq:def-Psik} between $\Psi_k$, $\psi_k$ and $\delta u_k$, we remark that
\begin{equation*}
    \| \,\delta u_k(t) \, \|_{L^2} \leq \| \Psi_k(t) \|_{L^2} + 
    \bigg( 
        \int_0^\infty \Big| i k \,\partial_y u_s(t,y) \int_0^y \psi_k(t, \omega ) d \omega  \Big|^2 dy 
    \bigg)^\frac{1}{2}.
\end{equation*}
A similar argument as the one used in \eqref{eq:control-of-int0y-deltauk}  implies finally that for any $t \in [0, T_{\rm max}]$:
\begin{equation}\label{eq:final-estimate-for-deltauk-in-the-proof}
    \| \,\delta u_k(t) \, \|_{L^2} \leq \| \Psi_k(t) \|_{L^2} + 
     k\frac{e^{2 \lambda^2 t}}{\lambda}  
    \| U_s' \|_{L^2_\lambda} \| \psi_k (t) \|_{L^2}
    \leq 
    \bigg(
        1 + 
        k\frac{e^{2 \lambda^2 t}}{\lambda}  \| U_s' \|_{L^2_\lambda}
    \bigg)
    \sqrt{2} \sqrt{\EE_k(t)} .
\end{equation}
By combining \eqref{eq:est-final-EEk(t)} and \eqref{eq:final-estimate-for-deltauk-in-the-proof}, we obtain \eqref{eq:ineq-deltauk}, thus concluding the proof of the proposition.
\end{proof}
\section{Proof of Theorem \ref{thm:inflating-solutions}}\label{sec:proof-of-first-theorem}
With respect to the previous sections, we are in the position to prove Theorem \ref{thm:inflating-solutions}. The strategy consists of the following: ansatz \eqref{eq:phi-inn-out} gives rise to a forced solution of the linearised Prandtl equations as described in Proposition \ref{prop:remainder-exact-form}. In order to control the exact solution with initial data $u_k(0,\cdot )$, we make use of Proposition \ref{prop:estimates-remainders-and-uf} and \eqref{eq:ineq-deltauk}, which estimates the solution arising from the forcing terms with vanishing initial data $\delta u_k$.

The proof will be a consequence of the following simple lemma:
\begin{lemma} \label{lemma:AuxiliaryEstimateMainTheorem}
    Let  $b_1,b_2>0$ as well as $c_i, ~ i=1,...,4$ be positive constants with $c_1<c_2$ and  $h=h_{m,k}: [0,1] \to \R$ be a continuous function such that
    \begin{align} \label{eq:UglyConstantsEstimate}
         h(t) \geq  b_1 e^{c_1 \sqrt{k}t}- b_2 ke^{c_2 \sqrt{k}t} \int_0^t \left( e^{-c_3\sqrt{k}} + \frac{e^{-\frac{c_4}{s}}}{\sqrt{s}} \right) d s 
    \end{align}
    for all $k\in \NN$ and $m\in \NN_0$ and $t\in [0,1]$. Then there exists a $K=K(b_1,b_2,c_1,c_2,c_3,c_4) \in \NN$ such that for all $k\geq K$, it holds 
    \begin{align}
        h(t) \geq \frac{b_1}{2}  e^{c_1 \sqrt{k}t}
    \end{align}
    for all $t \in [0, \tfrac{\Ccal}{\sqrt[4]k} ]$ where $\Ccal = \frac12 \left( \frac{c_4}{c_2} \right)^{1/2} $.
\end{lemma}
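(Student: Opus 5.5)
The bound is elementary: show that the negative term in \eqref{eq:UglyConstantsEstimate} is at most $\frac{b_1}{2}e^{c_1\sqrt{k}t}$ on $[0,\mathcal{C}k^{-1/4}]$ once $k$ is large. Since $c_2>c_1$, it suffices to prove the stronger statement
\begin{equation*}
    b_2 k\, e^{c_2\sqrt{k}t}\int_0^t\Big(e^{-c_3\sqrt{k}}+\frac{e^{-c_4/s}}{\sqrt{s}}\Big)ds \le \frac{b_1}{2},\qquad t\in[0,\mathcal{C}k^{-1/4}],
\end{equation*}
because $e^{c_1\sqrt{k}t}\ge 1$. Assume $k$ is large enough that $\mathcal{C}k^{-1/4}\le 1$. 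On this interval $c_2\sqrt{k}t\le c_2\mathcal{C}k^{1/4}$, so the exponential prefactor is at most $e^{c_2\mathcal{C}k^{1/4}}$. Handle the two integrals separately.

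\emph{First integral.} It is bounded by $t e^{-c_3\sqrt{k}}\le e^{-c_3\sqrt{k}}$. Its contribution is therefore at most $b_2 k\, e^{c_2\mathcal{C}k^{1/4}-c_3\sqrt{k}}$. This tends to $0$ as $k\to\infty$, since $\sqrt{k}$ dominates $k^{1/4}$ and any polynomial factor.

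\emph{Second integral.} The map $s\mapsto e^{-c_4/s}/\sqrt{s}$ is increasing on $(0,2c_4)$, because its logarithmic derivative is $c_4/s^2-1/(2s)$. For $k$ large, $t\le \mathcal{C}k^{-1/4}<2c_4$, so the integral is at most $t\cdot e^{-c_4/t}/\sqrt{t}=\sqrt{t}\,e^{-c_4/t}$. Hence the contribution is at most
\begin{equation*}
    b_2 k\sqrt{t}\, e^{c_2\sqrt{k}t-c_4/t}.
\end{equation*}
The exponent $g(t)=c_2\sqrt{k}t-c_4/t$ is increasing in $t$, so it is maximised at $t=\mathcal{C}k^{-1/4}$. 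There
\begin{equation*}
    g=\Big(c_2\mathcal{C}-\frac{c_4}{\mathcal{C}}\Big)k^{1/4}.
\end{equation*}
With $\mathcal{C}=\tfrac12(c_4/c_2)^{1/2}$ we get $c_2\mathcal{C}=\tfrac12\sqrt{c_2c_4}$ and $c_4/\mathcal{C}=2\sqrt{c_2c_4}$. This gives $g\le -\tfrac32\sqrt{c_2c_4}\,k^{1/4}$, and the contribution is at most $b_2 k\, e^{-\frac32\sqrt{c_2c_4}k^{1/4}}\to 0$. This is exactly where the factor $\tfrac12$ in $\mathcal{C}$ matters: it makes the exponent strictly negative with margin, rather than $0$ at the optimal time $\sqrt{c_4/c_2}\,k^{-1/4}$.

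\emph{Conclusion.} Both contributions tend to $0$ as $k\to\infty$, at rates depending only on the constants. So there is $K=K(b_1,b_2,c_1,\dots,c_4)$ such that for $k\ge K$ each is at most $b_1/4$ and also $\mathcal{C}k^{-1/4}<\min\{1,2c_4\}$. Then $h(t)\ge b_1e^{c_1\sqrt{k}t}-\tfrac{b_1}{2}\ge \tfrac{b_1}{2}e^{c_1\sqrt{k}t}$. The only delicate point is the monotonicity bound on the second integral together with the choice of $\mathcal{C}$. Everything else is elementary, and the hypotheses $c_1<c_2$ and uniformity in $m$ play no essential role.
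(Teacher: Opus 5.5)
Your proposal is correct and follows essentially the same route as the paper: you split the negative term into the $e^{-c_3\sqrt{k}}$ part and the $e^{-c_4/s}/\sqrt{s}$ part, bound each integral by $t$ times the endpoint value of the integrand, and use $\mathcal{C}=\tfrac12(c_4/c_2)^{1/2}$ to get $c_2\sqrt{k}\,t-c_4/t\le-\tfrac32\sqrt{c_2c_4}\,k^{1/4}$ on $[0,\mathcal{C}k^{-1/4}]$. Your version is tidier than the paper's: the monotonicity argument justifies the endpoint bound and keeps the correct factor $\sqrt{t}$, and you avoid the paper's restriction $t\le (c_3-1)/c_2$, which tacitly needs $c_3>1$.
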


\begin{proof}
    At first,  we apply the mean value theorem in integral form and  simplify \eqref{eq:UglyConstantsEstimate} to
    \begin{align*}
        h(t) 
         & \geq   b_1 e^{c_1\sqrt{k}t}  - \underbrace{b_2kt e^{c_2\sqrt{k}t-\frac{c_4}{t}} }_{=:I}  
         -\underbrace{b_2k t e^{(c_2t-c_3)\sqrt k}}_{:=II} .
    \end{align*}
    For the first term, we point out that $c_2\sqrt{k}t-\frac{c_4}{t}\leq -\frac{\sqrt{c_1c_2}}{\sqrt[4]k}$ is true for $0\leq t \leq  \frac{1}{\sqrt[4]k}\frac12 \left( \frac{c_4}{c_2} \right)^{1/2}$. Therefore, there exists a $K_I\in \NN$ such that
    \begin{align*}
        I&\leq b_2k e^{-\sqrt{c_1c_2}\sqrt[4]{k}} \leq  \frac{b_1}{4}
    \end{align*}
    for all $k\geq K_{I}$ and  $0\leq t \leq \frac12 \left( \frac{c_4}{c_2} \right)^{1/2} \frac{1}{\sqrt[4]{k}} $.
    With respect to the second term, we note that $0\leq t \leq \frac{c_3-1}{c_2}$ implies $c_2t-c_3\leq -1$. Hence there exists $K_{II} \in \NN$ such that 
    \begin{align*}
        II & \leq t b_2 k e^{-\sqrt{k}}  \leq t \frac{b_1 c_1 \sqrt{k} }{4} \leq \frac{b_1}{4} e^{c_1 \sqrt k t}
    \end{align*}
    for all $k\geq K_{II}$ and $0\leq t\leq \frac{c_3-1}{c_2}$. Setting $K := \max\{K_I,K_{II}, \lceil\left( \frac12 \left( \frac{c_4}{c_2} \right)^{1/2}\right)^{-1/4} \rceil\}$, the statement follows.
 \end{proof}

\begin{proof}[Proof of Theorem \ref{thm:inflating-solutions}]
    Let $u_k$ be the solution of \eqref{eq:lin-Prandtl-projected-k} subject to the initial datum $u_k(0, \cdot) =\del_y\phi_{k}(0,\cdot)$ of \eqref{eq:phi-inn-out}. Then it holds $u_k=u_k^{\rm fr}+ \delta u_k$ by linearity. It follows that 
    \begin{align*}
        \no{ u_k(t,\cdot)}{H^{m-1}_\lambda( \R^+)} & \geq \no{ u_k(t,\cdot)}{L^2( \R^+)}\\
        & \geq \no{u_k^{\mathrm{fr}}(t,\cdot)}{L^2(\R^+)} - \no{\delta u_k(t,\cdot)}{L^2(\R^+)} 
    \end{align*}
    Next, we substitute the estimate for the initial datum \eqref{eq:InitialDataUpperBound}, the lower bound on the forced solution \eqref{eq:ForcedSolutionLowerBound} and the estimate for $\delta u_k$ given by \eqref{eq:ineq-deltauk} with respect to the family of remainders derived in Proposition \eqref{prop:remainder-exact-form} to obtain
    \begin{align*}
    \no{u_k(t,\cdot)}{H^{m-1}_\lambda( \R^+)} &\geq  
            \frac{1}{2}
            \| U_s' \|_{L^2(a_0+\frac{d}{2}, \infty)}
            e^{t \sqrt{k} \frac{|U_s''(a_0)|^{1/2}}{2\sqrt{2}}} \\
            &\phantom{=}- 4 \bigg( 1 +  k\frac{e^{2\lambda^2 t}} {\sqrt{\lambda}} \| U_s'' \|_{L^2_\lambda }  \bigg) \times \\
    & \phantom{=}\phantom{=} \times \int_0^ t \no{ \sum_{j=1}^7 \Rcal^1_{j,k}(\tau ,\cdot)+\sum_{j=1}^3 \Rcal^2_{j,k}(\tau,\cdot)\, }{L^2} |e^{\int_0^\tau \sigma_k(l) dl} |
    e^{ (t-\tau) \sqrt{k} (1+2\|U_s''\|_{L^\infty} )}
    \,d\tau.
    \end{align*}
Furthermore, by inserting the estimates for the remainders, \eqref{eq:EstimateRemainderFirstKind} and \eqref{eq:prop-remainders-exp-decay-in-k}, and restricting ourselves to $0\leq t\leq 1$, we have
    \begin{align} \label{eq:EstimateProofMainTheorem}
    \begin{aligned}
    \no{ u_k(t,\cdot)}{H^m_\lambda( \R^+)} &\geq 
            \frac{1}{2}
            \| U_s' \|_{L^2(a_0+\frac{d}{2}, \infty)}
            e^{t \sqrt{k} \frac{|U_s''(a_0)|^{1/2}}{2\sqrt{2}}} \\
            &\phantom{=}- 4 \bigg( 1 +  \frac{e^{2\lambda^2 }} {\sqrt{\lambda}} \| U_s'' \|_{L^2_\lambda }  \bigg)
    \Ccal_\Rcal (1+d^{-1}) k  \, e^{  (1+2\|U_s''\|_{L^\infty} )\sqrt{k} t}
     \\
     &\phantom{===}\times \int_0^ t  \left[ 
            \frac{ e^{ - \frac{d^2}{16 \tau} } }{\sqrt{\tau }}+
           e^{    -\sqrt{k} \,\big(\frac{d}{16}\big)^2  \big(\frac{|U_s''(a_0)|}{2} \big)^{1/2}} \right] \underbrace{|e^{\int_0^\tau \sigma_k(l) dl} |
    e^{-\tau \sqrt{k} (1+2\|U_s''\|_{L^\infty} )}}_{\leq 1}
    \,d\tau.
       \end{aligned}
    \end{align}
    The inequality in the last line follows from the Definition of $\sigma_k$ in \eqref{eq:phi-inn-out}. With \eqref{eq:EstimateProofMainTheorem} at hand, all assumptions of Lemma \ref{lemma:AuxiliaryEstimateMainTheorem} are satisfied. It follows that there exists a $K\in \NN$ which only depend on $m, U_s, d,\lambda$ and $\Ccal_\Rcal$ such that the solution $ u_k$ satisfies
    \begin{align*}
        \no{ u_k(t,\cdot)}{H^{m-1}_\lambda(\R^+)}\geq \frac{\no{U_s'}{L^2(a_0+\tfrac{d}{2},\infty)}}{2} \exp \left( \frac12\sqrt{k\frac{|U''(a_0)|}{2}} t \right) 
    \end{align*}
    for all $k\geq K$ and all $0\leq t\leq \frac{\Ccal}{\sqrt[4]{k}}$ with $\Ccal = \frac{d}{16(1+\no{U''_s}{L^\infty} )}$.  This proves the claim.
\end{proof}

\section{Proof of Theorem \ref{thm:non-existence-of-solutions}} \label{sec:proof-of-second-main-thm}
This section is dedicated to the proof of Theorem \ref{thm:non-existence-of-solutions}. We show that for the specific choice of initial data  $u_{\rm in}$ as in
\eqref{eq:intro-Uincomplex-series},
\begin{equation*}
        \mathbf{U}_{\rm in}(x,y) := \sum_{k = 1}^\infty e^{-\sigma_0 \sqrt[4]{k}} \frac{d}{dy} \phi_{{\rm unst,k}} (y) e^{ \im k x}  \in \mathbb C,\quad (x,y) \in \mathbb T \times \mathbb R_+,    
\end{equation*}
there cannot exist a corresponding weak solution.
\textit{A contrario}, suppose there existed a weak solution $u$ in the sense of Definition \ref{def:weak-solution-Prandtl} with initial datum either $u_{\rm in } = \Re (\mathbf{U}_{\rm in})$ or $u_{\rm in } = \Im (\mathbf{U}_{\rm in})$ where $\mathbf{U}_{\rm in}=\mathbf{U}_{\rm in}(x,y)$ is defined in \eqref{eq:intro-Uincomplex-series}. These solutions, in particular, satisfy $u \in L^\infty(0,T;L^2(\mathbb T\times \R_+))$ for some $\delta>0$. Furthermore, there exists a Fourier expansion
\begin{align*}
    u(t,x,y) = \sum_{k\in \ZZ} u_k(t,y) e^{\im k x}
\end{align*}
which holds for almost every $t\in (0,\delta)$. Consider a  $t_0\in (0,\delta)$ such that the Fourier series converges. Since $u$ is a weak solution to the linearised Prandtl equations, the coefficients solve \eqref{eq:lin-Prandtl-projected-k} for every $k\in \ZZ$ with initial value $u_k(0,y) = e^{-\sigma_0 \sqrt[4]{k}}\frac{d}{dy} \phi_{\mathrm{unst},k}(y)$. The solutions $(u_k)_k$ are uniquely determined and by Theorem \ref{thm:inflating-solutions} satisfy
\begin{align*}
    \no{u_k(t,\cdot)}{L^2(\R^+)}\geq \Ccal e^{-\sigma_0 \sqrt[4] k + \frac12\sqrt{k\frac{|U''_s(a_0)|}{2}} ~t} 
\end{align*}
for every $k\geq K$, where $K\in \NN$ is fixed, and for every  $0\leq t \leq \tfrac{d}{16(1+\no{U''}{L^\infty})}  \frac{1}{\sqrt[4] k}$ with $\Ccal = \frac{1}{4} 
        \bigg( 
        \int_{a_0+\frac{d}{2}}^\infty | \, U_s'(y)\,|^2 dy \,\bigg)^\frac{1}{2}$.
We set $k_{t_0} := \left\lfloor \left( \frac{d}{16(1+\no{U''_s}{L^\infty} )} \cdot \frac{1}{t_0}\right)^4 \right\rfloor$. 
By Parseval's identity, it follows
\begin{align*}
    \no{u(t_0,\cdot)}{L^2(\mathbb T \times \R^+)} & \geq  \no{u_k(t_0,\cdot)}{L^2(\R^+)} \\
    &\geq \Ccal e^{- \sigma_0 \sqrt[4] k_{t_0} + \tfrac12 \sqrt{k_{t_0}\frac{|U''_s(a_0)|}{2}} ~t_0} \\
    & \geq \Ccal \exp \left( \sqrt[4]{k_{t_0}} \left[ \frac{d\sqrt{|U_s''(a_0)}}{\sqrt 2 \cdot 16 (1+ \no{U''_s}{L^\infty})}- \sigma_0\right] \right)
\end{align*}
Now, if $t_0 \to 0^+$ and if $\sigma_0  <\frac{d|U''(a_0)|^{1/2}}{32(1+\no{U''_s}{L^\infty})} < \frac{d\sqrt{|U_s''(a_0)}}{\sqrt 2 \cdot 16 (1+ \no{U''_s}{L^\infty})} $, this yields a contradiction to $u\in L^\infty(0,T;L^2(\mathbb T\times \R_+)) $. Therefore, the claim is proved.

\appendix

\section{A collection of auxiliary lemmata}\label{appx:lemma:estimates-for-us}

\noindent
This section is devoted to some technical lemmas used in the previous sections. The results depend on the notation and assumptions introduced throughout the paper, which we recall at the relevant steps.
\begin{lemma}\label{lemma:estimates-for-us-appx}
For any time $t \in [0, T_{\rm max}]$, any index $n \in \{0,1,2\}$ and any frequency $k \in \mathbb N$:
\begin{align}
        &
        \max_{a_0-\frac{d}{2}\leq y \leq  a_0+ \frac{d}{2}}
        \bigg|
            \partial_y^n
            \Big( 
                u_s(t,y) 
                -  
                \alpha(t)
                - \beta(t)  (y-a(t))^2 
            \Big)
        \bigg|
        \leq 
        d^{3-n}
        \big\|  U_s^{(3)}\big\|_{L^1}
        \frac{e^{ - \frac{d^2}{16 t} }}{\sqrt{\pi t}}
        ,\label{eq:ineq-in-lemma-us-appx}
\\
       &
       %
       |a'(t)| 
       \leq 
       \frac{2}{|U_s''(a_0)|}
       \| U_s^{(3)} \|_{L^1(\mathbb R_+)}
       \frac{e^{ - \frac{d^2}{16 t} }}{\sqrt{\pi t}},
       \qquad
       |\gamma'(t)|
       \leq 
       \frac{2}{|U_s''(a_0)|^\frac{3}{4}}
       \| U_s^{(3)} \|_{W^{1,1}}
       \frac{e^{ - \frac{d^2}{16 t} }}{\sqrt{\pi t}}
       ,\label{eq:ineq-in-lemma-us-a'-gamma'-appx}
\\
       &
       %
       \max_{a_0-\frac{d}{2}\leq y \leq  a_0+ \frac{d}{2}}
       \Big| \Upsilon^{(n)}\big(\sqrt[4]{k}\,z (t,y) \big) \Big|
       \leq 
       4 \big(  1+ |U_s''(a_0)|^{\frac{2-n}{4}} d^{2-n}   \big) k^\frac{2-n}{4}.\label{eq:ineq-in-lemma-us-Upsilon-appx}
    \end{align}
\end{lemma}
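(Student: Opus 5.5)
The plan is to derive all three families of bounds from the Green's representation of $u_s$ on the half-line with Dirichlet data, combined with Taylor's theorem at the tracked critical point $a(t)$. The decisive fact is that $U_s^{(3)}$ vanishes on $[a_0-d,a_0+d]$.

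First, by (H2), $U_s$ and its even derivatives vanish at $y=0$. Hence the odd extension is smooth enough to differentiate under the kernel, and
\begin{equation*}
\partial_y^3 u_s(t,y)=\frac{1}{\sqrt{4\pi t}}\int_0^\infty\Big(e^{-\frac{|y-w|^2}{4t}}\mp e^{-\frac{|y+w|^2}{4t}}\Big)U_s^{(3)}(w)\,dw,
\end{equation*}
with the sign fixed by parity; the reflected-kernel structure is all we use. For $y\in[a_0-\tfrac{5d}{8},a_0+\tfrac{5d}{8}]$ and $w\notin[a_0-d,a_0+d]$ we have $|y-w|\geq \tfrac{3d}{8}$, and also $y+w\geq a_0>d$. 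Hence both kernel terms are bounded by $e^{-d^2/(16t)}$ up to a margin, and
\begin{equation*}
\big|\partial_y^3u_s(t,y)\big|\leq \|U_s^{(3)}\|_{L^1}\,\frac{e^{-\frac{d^2}{16t}}}{\sqrt{\pi t}}.
\end{equation*}
The same argument with one more derivative, moved onto $U_s^{(4)}$ by integration by parts and controlled by the $W^{1,1}$ norm, bounds $\partial_y^4u_s$ in the same region. Next, since $\partial_y u_s(t,a(t))=0$, $\alpha=u_s(t,a(t))$ and $2\beta=\partial_y^2u_s(t,a(t))$, Taylor's theorem with integral remainder about $a(t)$ expresses $\partial_y^n(u_s-\alpha-\beta(y-a)^2)$ through $\partial_y^3u_s$ at intermediate points. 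Those points lie within $\tfrac{5d}{8}$ of $a_0$ because $a(t)\in[a_0-\tfrac d8,a_0+\tfrac d8]$ by \eqref{eq:assumptions-for-Tmax}. The factors $|y-a(t)|^{3-n}/(3-n)!\leq d^{3-n}$ then give \eqref{eq:ineq-in-lemma-us-appx}.

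For \eqref{eq:ineq-in-lemma-us-a'-gamma'-appx}, use the ODE \eqref{def:a(t)}: $|a'|=|\partial_y^3u_s(t,a)|/|\partial_y^2u_s(t,a)|$, and by \eqref{eq:assumptions-for-Tmax} the denominator is at least $|U_s''(a_0)|/2$. The factor $2/|U_s''(a_0)|$ then follows from the bound above. For $\gamma=(-\beta)^{1/4}$, write
\begin{equation*}
\gamma'=\tfrac14(-\beta)^{-3/4}(-\beta'),\qquad 2\beta'=\partial_t\partial_y^2u_s(t,a)+\partial_y^3u_s(t,a)\,a'=\partial_y^4u_s(t,a)+\partial_y^3u_s(t,a)\,a',
\end{equation*}
where the heat equation was used in the last step. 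Both terms carry the factor $e^{-d^2/(16t)}/\sqrt{\pi t}$. For small $T_{\rm max}$ the product term is absorbed, since $|a'|$ is bounded. Then $(-\beta)^{-3/4}\lesssim |U_s''(a_0)|^{-3/4}$ gives the stated constant, up to the bookkeeping of numerical factors.

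For \eqref{eq:ineq-in-lemma-us-Upsilon-appx}, start from the explicit derivatives computed in Lemma \ref{lemma:quadratic-unstable-quasi-eigenmode}. We have $\Upsilon''=1+\erf(\zeta/\sqrt2)$ and $\Upsilon'''=\sqrt{2/\pi}e^{-\zeta^2/2}$. On the rays $\arg\zeta\in\{-\pi/8,\,7\pi/8\}$, which contain $\sqrt[4]{k}\,z(t,y)$, we have $\operatorname{Re}\zeta^2=|\zeta|^2\cos(\pi/4)\geq0$, so $|e^{-\zeta^2/2}|\leq1$. Integrating $\Upsilon'''$ along the ray from $0$ then gives $|\Upsilon''(\zeta)-1|\leq |\zeta|$ and, more usefully, $|\erf|\leq 1+$ a bounded amount on these rays, so $|\Upsilon''|\leq 4$. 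Integrating twice more from $\zeta=0$, where $\Upsilon(0)=\tfrac12$ and $\Upsilon'(0)=\sqrt{2/\pi}$, yields $|\Upsilon'(\zeta)|\leq 1+4|\zeta|$ and $|\Upsilon(\zeta)|\leq 1+|\zeta|+2|\zeta|^2$. Finally insert $|\zeta|\leq k^{1/4}\gamma(t)\,d\leq k^{1/4}|U_s''(a_0)|^{1/4}d$ and use $k\geq1$.

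The main obstacle is the first step: rigorously justifying the Green-formula representation of $\partial_y^3u_s$ and $\partial_y^4u_s$ near $a_0$, including the role of (H2) for the odd extension, and placing the geometric margin so that exactly $e^{-d^2/(16t)}$ with prefactor $1/\sqrt{\pi t}$ emerges. If the constants do not close exactly, I would enlarge the margin using the room between $\tfrac d8$ and $\tfrac{d}{2}$.
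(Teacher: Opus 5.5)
Your route is the paper's. You use the Taylor expansion about $a(t)$ with $\partial_y u_s(t,a(t))=0$, the Green representation of $\partial_y^3u_s$ and $\partial_y^4u_s$ with $U_s^{(3)}\equiv 0$ on $[a_0-d,a_0+d]$, the ODE \eqref{def:a(t)} for $a'$, the chain rule with $\partial_t\partial_y^2u_s=\partial_y^4u_s$ for $\gamma'$, and direct bounds for $\Upsilon^{(n)}$ on the rays $\arg\zeta\in\{-\pi/8,7\pi/8\}$. Your parity remark is more accurate than the paper's display: $\partial_y^3u_s$ carries the even, Neumann-type kernel. Only the moduli of the two Gaussians matter, so this changes nothing.

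\textbf{The step that fails as written.} The geometric step must produce exactly $e^{-d^2/(16t)}$, and it does not as you set it up.
\begin{itemize}
\item If the intermediate points are only known to lie in $[a_0-\tfrac{5d}{8},a_0+\tfrac{5d}{8}]$, you get $|y-w|\ge\tfrac{3d}{8}$. That yields only $e^{-9d^2/(256t)}$, which is strictly weaker than $e^{-d^2/(16t)}$ for every $t>0$.
\item The claim $y+w\ge a_0$ is false for $w$ near $0$; there only $y+w\ge a_0-\tfrac{5d}{8}$ holds.
\end{itemize}
No tuning of margins is needed; you only need to locate the intermediate points correctly, as the paper does. The intermediate point $\xi$ lies on the segment joining $y$ and $a(t)$. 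Both endpoints lie in $[a_0-\tfrac d2,a_0+\tfrac d2]$, since $|a(t)-a_0|\le\tfrac d8$, so $\xi$ does too. Hence, for $w\notin[a_0-d,a_0+d]$, you have $|\xi-w|\ge\tfrac d2$ and $\xi+w\ge a_0-\tfrac d2>\tfrac d2$. Each Gaussian is then at most $e^{-d^2/(16t)}$, and $2/\sqrt{4\pi t}=1/\sqrt{\pi t}$ gives \eqref{eq:ineq-in-lemma-us-appx} with exactly the stated constants. At $y=a(t)$ the separation is at least $\tfrac{7d}{8}$, so your bounds on $a'$ and on $\partial_y^4u_s(t,a(t))$ go through.

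\textbf{Two smaller points.}
\begin{itemize}
\item For $n=2$ in \eqref{eq:ineq-in-lemma-us-Upsilon-appx}, the bound $|e^{-\zeta^2/2}|\le 1$ only yields $|\Upsilon''(\zeta)-1|\le|\zeta|\lesssim k^{1/4}$, which is not uniform in $k$. You need the actual decay: your own computation $\operatorname{Re}\zeta^2=|\zeta|^2/\sqrt2$ gives $|e^{-\zeta^2/2}|=e^{-|\zeta|^2/(2\sqrt2)}$ on the rays. Then $|\Upsilon''(\zeta)-1|\le\sqrt{2/\pi}\int_0^\infty e^{-r^2/(2\sqrt2)}\,dr=2^{1/4}$. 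This is the same Gaussian-integral estimate the paper uses for $\tfrac12|1+\erf(\zeta/\sqrt2)|$. After it, your integration from $\zeta=0$ for $\Upsilon'$ and $\Upsilon$ is correct and slightly cleaner than inserting the closed forms.
\item For $\gamma'$, your formula $2\beta'=\partial_y^4u_s+\partial_y^3u_s\,a'$ is right; the paper's display omits $a'$ and the numerical factor. With $\gamma'=-\tfrac14(-\beta)^{-3/4}\beta'$ and $-\beta\ge|U_s''(a_0)|/4$, the stated constant follows once $|a'|\le 4\sqrt2$ on $[0,T_{\rm max}]$. This holds after shrinking $T_{\rm max}$, since $|a'(t)|\lesssim e^{-d^2/(16t)}/\sqrt t\to0$. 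The extra smallness is harmless, because it only enlarges $K$ in \Cref{thm:inflating-solutions}.
\end{itemize}
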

\begin{proof}
We begin by addressing \eqref{eq:ineq-in-lemma-us-appx}. Since $u_s \in \mathcal{C}^\infty(\big[a_0-\frac{d}{2}, \, a_0+ \frac{d}{2}\big] \times \mathbb R_+)$, invoking the Taylor theorem, there exists $\xi_n(t,y)\in \big[a_0-\frac{d}{2}, \, a_0+ \frac{d}{2}\big]$ between $y$ and $a(t)$ such that
    \begin{equation*}
    \begin{aligned}
      \partial_y^n 
      \Big[ 
        u_s(t,y) -  
        u_s(t,a(t)) - 
        \frac{\partial_y^2 u_s(t,a(t))}{2} &(y-a(t))^2
      \Big]
      = 
      \frac{\partial_y^{3} u_s(t,\xi_n(t,y))}{(3-n)!} (y-a(t))^{3-n}
       \\
       &= 
       \frac{(y-a(t))^{3-n}}{(3-n)!}
       \frac{1}{\sqrt{4\pi t}}
       \int_0^\infty 
       \bigg( 
            e^{-\frac{|\xi_n(t,y) - \omega|^2}{4t}}
            -
            e^{-\frac{|\xi_n(t,y) + \omega|^2}{4t}}
       \bigg)
       U_s^{(3)}(\omega) d\omega,
    \end{aligned}
    \end{equation*}
    using Green's formula of the heat kernel on the half-line with Dirichlet conditions. From the quadratic assumption,  $U_s^{(3)}\equiv 0$ in $\big[a_0-d, \, a_0+ d\big]$, thus the last integral is on $\mathbb R_+\setminus  \big[a_0-d, \, a_0+ d\big]$. 
    Inequality \eqref{eq:ineq-in-lemma-us-appx} follows thus from
    \begin{equation*}
        |y-a(t)| \leq d,\qquad 
        |\xi_n(t,y) - \omega|\geq  \frac{d}{2},
        \qquad 
        |\xi_n(t,y) + \omega| = \xi_n(t,y)+\omega \geq  \xi_n(t,y)\geq a_0 -\frac{d}{2} \geq \frac{d}{2},
    \end{equation*}
    for any  $\omega \in \mathbb R_+ \setminus [a_0-d, \, a_0+ d]$ and any $t \in [0, T_{\rm max}]$.

    \smallskip
    \noindent 
    We next address \eqref{eq:ineq-in-lemma-us-a'-gamma'-appx} and recall that $\partial_y^2 u_s(t,a(t))+ \partial_y^3 u_s(t,a(t)) = 0$ is for any $t\in [0, T_{\rm max}]$. Hence,
    \begin{equation*}
    \begin{aligned}
      |a'(t)| 
      = \left|\frac{\partial_y^3 u_s(t,a(t))}{\partial_y^2 u_s(t,a(t))} \right|
      &= 
      \frac{1}{|\partial_y^2 u_s(t,a(t))|}
      \frac{1}{\sqrt{4\pi t}}
      \left|
        \int_0^\infty 
       \bigg( 
            e^{-\frac{|a(t) - \omega|^2}{4t}}
            -
            e^{-\frac{|a(t) + \omega|^2}{4t}}
       \bigg)
       U_s^{(3)}(\omega) d\omega
      \right|.
    \end{aligned}
    \end{equation*}
    The first inequality of \eqref{eq:ineq-in-lemma-us-a'-gamma'-appx} follows from $|\partial_y^2 u_s(t,a(t))| > \frac{|U_s''(a_0)|}{2}$ and, once more, $|a(t) - \omega| \geq \frac{d}{2}$ and $|a(t) + \omega| \geq a(t) \geq a_0 - \frac{d}{2}\geq \frac{d}{2}$, for any $t \in [0, T_{\max}]$ and $\omega \in \mathbb R_+ \setminus [a_0-\frac{d}{2}, \, a_0+ \frac{d}{2}]$. For what concerns $\gamma'(t)$ in  \eqref{eq:ineq-in-lemma-us-a'-gamma'-appx}, we use the fact that $u_s$ satisfies the heat equation and therefore $\partial_t \partial_y^2 u_s = \partial_y^4 u_s$. We obtain
    \begin{equation*}
        \gamma'(t) = \frac{d}{dt}
        \Big[
        \Big( -\frac{\partial_y^2 u_s(t,a(t))}{2}\Big)^\frac{1}{4}
        \Big]
        = 
        \Big( -\frac{\partial_y^2 u_s(t,a(t))}{2}\Big)^{-\frac{3}{4}}
        \Big(  \partial_y^4 u_s(t, a(t)) + \partial_y^3u_s (t, a(t)) \Big).
    \end{equation*}
    The second inequality in \eqref{eq:ineq-in-lemma-us-a'-gamma'-appx} follows by applying Green’s formula to the resulting expression and using estimates analogous to those before.

    \smallskip
    \noindent 
    We finally address \eqref{eq:ineq-in-lemma-us-Upsilon-appx}. Recalling that  $z(t,y)= \gamma(t) (y-a(t))e^{-\frac{i\pi}{8}}$, we first remark that by substitution:
    \begin{align*}
        \frac{1}{2}
        \bigg| 
            1 + \erf\bigg( \frac{\sqrt[4]{k}\, z(t,y)}{\sqrt{2}} \bigg) 
        \bigg|
        = 
        \frac{1}{\sqrt{2\pi}}
        \bigg|
            \int_{-\infty}^{\sqrt[4]{k}\,\gamma(t) (y-a(t))}
            \exp\left( -e^{-\frac{i \pi}{4}}\frac{\omega^2}{2}\right)d\omega \,
            e^{-\frac{ i \pi}{8}}
        \bigg|
        \leq 
        \frac{1}{\sqrt{2\pi}}
        \int_{-\infty}^\infty e^{ - \frac{\omega^2}{2\sqrt{2}}}d\omega 
        = \sqrt[4]{2}.
    \end{align*}
    Hence, using the definition of $\Upsilon$ in \eqref{def:Upsilon-fct}, we find that for $n = 0$:
    \begin{equation*}
    \begin{aligned}
        |\Upsilon(&\sqrt[4]{k}\, z(t,y) ) | 
        =
        \bigg|\;
                \frac{1}{\sqrt{2\pi}}\,\zeta\, e^{-\frac{\zeta^2}{2}} + 
                (1+\zeta^2)  
                \frac{\erf\left( \zeta/\sqrt{2} \right) + 1}{2} 
        \;\bigg|_{\zeta =\sqrt[4]{k}\, z(t,y) = \sqrt[4]{k}\,\gamma(t) (y-a(t)) e^{-\frac{i\pi}{8}}}
        \\
        &\leq 
        \frac{1}{\sqrt{2\pi}} 
        \Big( 
            \sqrt[4]{k}\,\gamma(t) |\,y-a(t)| \,
        \Big)
        e^{- \frac{1}{2\sqrt{2}} \big(\sqrt[4]{k}\,\gamma(t) |y-a(t)|\,\big)^2}
        +
        \Big(
            1+ \sqrt{k} \,\gamma(t)^2 (y-a(t))^2
        \Big)
        \frac{1}{2}
        \bigg| 
            1 + \erf\bigg( \frac{\sqrt[4]{k}\, z(t,y)}{\sqrt{2}} \bigg) 
        \bigg|
        \\
        &\leq \frac{\sqrt[4]{2}}{\sqrt{2\pi e}} +
        \sqrt[4]{2}+ 
        \sqrt[4]{2}
        \sqrt{k} \,\gamma(t)(y-a(t))^2 
        \leq 
        1  +   |U_s''(a_0)|^\frac{1}{2} d^2  \sqrt{k} 
        \leq 
        4
        \big( 1  +   |U_s''(a_0)|^\frac{2-0}{4} d^{2-0} \big) k^\frac{2-0}{4} ,
    \end{aligned}
    \end{equation*}
    where we also used $\max\limits_{\mathrm{x}\in \mathbb R} \big|\mathrm{x} \, e^{-\mathrm{x}^2/(2\sqrt{2})}\big|=\sqrt[4]{2}/\sqrt{e}$ and $|\gamma(t)|\leq |U_s''(a_0)|^{1/4}$. For $n = 1$, we proceed similarly:
    \begin{equation*}
    \begin{aligned}
        |\Upsilon'(\sqrt[4]{k}\, z(t,y) ) | 
        &= 
        \bigg|
            \sqrt{\frac{2}{\pi}}e^{-\frac{\zeta^2}{\sqrt{2}}}
            +
            \zeta 
            \Big(
                1 + \erf\Big( \frac{\zeta}{2}\Big)
            \Big)
        \bigg|_{\zeta = \sqrt[4]{k}\,\gamma(t) (y-a(t)) e^{-\frac{i\pi}{8}}}
        \leq 
        \sqrt{\frac{2}{\pi}} + \sqrt[4]{k}\,\gamma(t) |y-a(t)|\,2 \sqrt[4]{2}
        \\
        &\leq \sqrt{\frac{2}{\pi}}
        +\sqrt[4]{k} |U_s''(a_0)|^{\frac{1}{4}}d 
        \,2 \sqrt[4]{2}
        \leq
        1 + 4 |U_s''(a_0)|^{\frac{1}{4}}d\sqrt[4]{k}
        \leq 
        4 \big( 1 + |U_s''(a_0)|^{\frac{2-1}{4}}d^{2-1} \big)k^\frac{2-1}{4}.
    \end{aligned}
    \end{equation*}
    The final case $n = 2$ is straightforward, since we simply obtain:
    \begin{equation*}
    \begin{aligned}
        |\Upsilon''(\sqrt[4]{k}\, z(t,y) ) | 
        =
        \bigg| 
            1 + \erf\bigg( \frac{\sqrt[4]{k}\, z(t,y)}{\sqrt{2}} \bigg) 
        \bigg|
        \leq 2\sqrt[4]{2} \leq 4 \big( 1 + |U_s''(a_0)|^{\frac{2-2}{4}}d^{2-2} \big)k^\frac{2-2}{4}.
    \end{aligned}
    \end{equation*}
    This concludes the proof of \Cref{lemma:estimates-for-us-appx}.
\end{proof}
\begin{lemma}\label{lemma:erf-H}
    For any time $t\in [0, T_{\rm max}]$ and any $y \in \mathbb R_+ \setminus [a_0 - \tfrac{d}{4}, a_0+\tfrac{d}{4}]$, the following inequality holds true:
    \begin{equation}\label{eq:erf-H-estimate-appx}
    \bigg| 
        \,
        \erf\bigg( \frac{\sqrt[4]{k}\,z(t,y)}{\sqrt{2}}\bigg) - H(y-a(t)) 
        \,
    \bigg| 
    \leq     
    \frac{13}{|U_s''(a_0)|^\frac{1}{4}} \frac{1}{\sqrt[4]{k}\,d}
    e^{    -\sqrt{k} \,\big(\frac{d}{16}\big)^2  \big(\frac{|U_s''(a_0)|}{2} \big)^{1/2}}.
\end{equation}
\end{lemma}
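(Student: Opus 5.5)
The proof rests on one elementary estimate for the complementary error function $\operatorname{erfc} = 1 - \erf$ along a ray inside the sector $|\arg w| < \tfrac{\pi}{4}$. We apply it with
\begin{equation*}
w := \frac{\sqrt[4]{k}\, z(t,y)}{\sqrt 2} = \frac{\sqrt[4]{k}\,\gamma(t)\,(y-a(t))}{\sqrt 2}\, e^{-\frac{i\pi}{8}} .
\end{equation*}
The two ingredients from earlier in the paper are the geometric bound \eqref{eq:estimate-y-a(t)-propositionest-2nd-part} and the bounds on $\gamma$ coming from \eqref{eq:assumptions-for-Tmax}. The first gives $|y-a(t)| > d/8$ for $y \in K = [a_0 - \tfrac{d}{2}, a_0 - \tfrac{d}{4}] \cup [a_0 + \tfrac{d}{4}, a_0 + \tfrac{d}{2}]$; this is the region where the lemma is applied, since it is the support of $\chi'$. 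The second gives $\gamma(t) \geq |U_s''(a_0)|^{1/4}/\sqrt 2$. Together they yield
\begin{equation*}
|w| \geq \frac{|U_s''(a_0)|^{1/4}\, d\,\sqrt[4]{k}}{16},
\qquad
{\rm Re}(w^2) = \frac{|w|^2}{\sqrt 2}.
\end{equation*}
For $y > a(t)$ we have $\arg w = -\pi/8$, hence $\arg w^2 = -\pi/4$, which gives the second identity.

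\emph{Step 1 (case $y > a(t)$, where $H = 1$).} Here $\erf(w) - H = -\operatorname{erfc}(w)$. I will write $\operatorname{erfc}(w) = \frac{2}{\sqrt\pi}\int_{\Gamma} e^{-s^2}\,ds$ with the contour $\Gamma$ given by the ray $s = w(1+\tau)$, $\tau \in [0,\infty)$. This deformation from the real ray is legitimate because $e^{-s^2}$ decays in the sector $|\arg s| < \pi/4$. Along $\Gamma$,
\begin{equation*}
|e^{-s^2}| = e^{-{\rm Re}(w^2)(1+\tau)^2} \leq e^{-{\rm Re}(w^2)}\, e^{-2{\rm Re}(w^2)\tau},
\qquad
|ds| = |w|\, d\tau .
\end{equation*}
Integrating in $\tau$ then gives
\begin{equation*}
|\operatorname{erfc}(w)| \leq \frac{2}{\sqrt\pi}\,\frac{|w|\, e^{-{\rm Re}(w^2)}}{2\,{\rm Re}(w^2)} = \sqrt{\frac{2}{\pi}}\,\frac{e^{-|w|^2/\sqrt2}}{|w|}.
\end{equation*}

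\emph{Step 2 (constants).} Inserting the lower bound on $|w|$ into the prefactor gives
\begin{equation*}
\sqrt{\frac{2}{\pi}}\cdot\frac{16}{|U_s''(a_0)|^{1/4}\, d\,\sqrt[4]{k}} \leq \frac{13}{|U_s''(a_0)|^{1/4}\,\sqrt[4]{k}\, d},
\end{equation*}
since $16\sqrt{2/\pi} \approx 12.77$. For the exponent,
\begin{equation*}
\frac{|w|^2}{\sqrt 2} = \frac{\sqrt k\,\gamma^2 (y-a)^2}{2\sqrt2} \geq \sqrt k\,\frac{d^2\,|U_s''(a_0)|^{1/2}}{128\sqrt2}
\geq \sqrt k\,\Big(\frac{d}{16}\Big)^2\Big(\frac{|U_s''(a_0)|}{2}\Big)^{1/2}.
\end{equation*}
Indeed the middle quantity is exactly twice the right-hand side, so there is a factor $2$ to spare. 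This yields \eqref{eq:erf-H-estimate-appx} on $y > a(t)$ with the stated constant $13$.

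\emph{Step 3 (case $y < a(t)$) — the main obstacle.} Here $-w$ lies on the ray $\arg = -\pi/8$ (or equivalently $7\pi/8$ for $w$ itself), and oddness of $\erf$ gives $\erf(w) = -1 + \operatorname{erfc}(-w)$. Steps 1–2 applied to $-w$ give the same bound for $\operatorname{erfc}(-w) = 1 + \erf(w)$. However, with $H$ the Heaviside function as in the paper, $H(y - a(t)) = 0$ on this side. The literal quantity $|\erf(w) - 0|$ then tends to $1$ as $k \to \infty$, so the inequality as worded cannot hold there. What the proof actually delivers on $y < a(t)$ is the exponential bound for $|\erf(w) - (2H-1)|$, equivalently for $\big|\tfrac12(1+\erf(w)) - H\big|$ up to a factor $2$. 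This is precisely the combination entering $\Upsilon - (1+\sqrt k z^2)H$, $\Upsilon' - 2\sqrt[4]k z H$ and $\Upsilon'' - 2H$ in \eqref{eq:Ups-H-formula} and in the estimates of $\mathcal R^2_{j,k}$, so those applications are unaffected. I would state the lemma on $y<a(t)$ in this corrected form (or with $H$ read as $2H-1$ there). In that normalisation, $|\erf(w) - (2H-1)| = \operatorname{erfc}(-w)$ is bounded by the same constant $13$ with no loss.
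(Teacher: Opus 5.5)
Your argument for $y>a(t)$ is the paper's: you integrate $e^{-s^2}$ along the ray $\arg s=-\pi/8$ from $w$ to infinity, use the same tail bound $\sqrt{2/\pi}\,e^{-|w|^2/\sqrt2}/|w|$, and the same constant $16\sqrt{2/\pi}\le 13$. Your diagnosis for $y<a(t)$ is also correct: the paper's proof itself sets $H(y-a(t))=-1$ there, so it actually proves the bound for $\erf-(2H-1)$, which is exactly what enters \eqref{eq:Ups-H-formula} and the $\mathcal R^2_{j,k}$ estimates. Your argument only uses $|y-a(t)|>d/8$, and this holds on all of $\mathbb R_+\setminus[a_0-\frac d4,a_0+\frac d4]$, not just on $K$, so it covers the full set in the statement. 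There is one arithmetic slip: from $\gamma^2\ge |U_s''(a_0)|^{1/2}/2$ and $(y-a(t))^2>d^2/64$ you get $|w|^2/\sqrt2>\sqrt k\,d^2|U_s''(a_0)|^{1/2}/(256\sqrt2)$, which coincides with the target exponent, so there is no factor $2$ to spare, although the inequality still holds.
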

\begin{proof}
    We first address the case of $y > a_0+\tfrac{d}{4}$, which implies $ y-a(t) >\frac{d}{8}>0$, since $a(t) \in [a_0-\tfrac{d}{8}, a_0 + \tfrac{d}{8}]$ for any $t\in [0, T_{\rm max}]$. In particular $H(y-a(t)) = 1$, for $y > a_0+\tfrac{d}{4}$.
    
    \noindent 
    We write the $\erf$-function as a path-integral in the complex plane with respect to its derivative $\erf'(\zeta) = \frac{2}{\sqrt{\pi}}e^{\zeta^2}$. Since 
    \begin{equation*}
        \frac{\sqrt[4]{k}\,z(t,y)}{\sqrt{2}}
        = 
        \frac{\sqrt[4]{k}}{\sqrt{2}} \, (y-a(t)) \, \gamma(t) e^{-\frac{i \pi }{8}}\in \mathbb C,\quad  
        \text{with}
        \quad 
        \gamma(t)= \bigg( \frac{|\partial_y^2 u_s(t,a(t))|}{2}\bigg)^\frac{1}{4}
        \geq
        \bigg( \frac{|U_s''(a_0)|}{4}\bigg)^\frac{1}{4}
        =
        \frac{|U_s''(a_0)|^\frac{1}{4}}{\sqrt{2}}
        >0,
    \end{equation*}
    we choose the complex line path $\Gamma$ given by $ \Gamma(\omega) =  \omega  e^{-\frac{i \pi}{8}}$ with  $\omega \in [0, \frac{\sqrt[4]{k}}{\sqrt{2}} \gamma(t)(y-a(t))]\subset \mathbb R_+$. Hence, also using $\erf(0) = 0$, the following identity holds true:
    \begin{equation}\label{eq:lemma-erf-H-erf-part}
    \begin{aligned}
         \erf\bigg( \frac{\sqrt[4]{k}\,z(t,y)}{\sqrt{2}}\bigg) 
         &= 
         \frac{2}{\sqrt{\pi}}
         \int_\Gamma e^{-\zeta^2} d\zeta 
         = 
         \frac{2}{\sqrt{\pi}}
         \int_0^{\frac{\sqrt[4]{k}}{\sqrt{2}} \gamma(t) (y-a(t))} 
         e^{-\Gamma(\omega)^2} \Gamma'(\omega) d\omega 
         \\
         &= 
         \frac{2}{\sqrt{\pi}} \int_0^{\frac{\sqrt[4]{k}}{\sqrt{2}} \gamma(t) (y-a(t))} \exp\big( -e^{-\frac{i\pi}{4}}\omega^2\big) d \omega\, e^{-\frac{i\pi}{8}}.
    \end{aligned}
    \end{equation}
    Next, we apply Cauchy’s theorem to shift the half-line Gaussian integral to an integral along the complex half-line at an angle of $e^{-\frac{i\pi}{8}}$ (where the integrand still retains exponential decay):
    \begin{equation}\label{eq:lemma-erf-H-1-is-int-et2}
        1 = \frac{2}{\sqrt{\pi}} \int_0^\infty e^{-l^2} dl   = 
        \frac{2}{\sqrt{\pi}}
        \int_0^\infty \exp\big( -e^{-\frac{i\pi}{4}}\omega^2\big) d \omega\, e^{-\frac{i\pi}{8}}.
    \end{equation}
    Here, the contribution from the circular arc at infinity (required to close the contour) is zero due to the exponential decay of $e^{-\zeta^2}$ in the sector $\arg(\zeta)< \frac{\pi}{4}$. Combining \eqref{eq:lemma-erf-H-erf-part} with \eqref{eq:lemma-erf-H-1-is-int-et2} implies
    \begin{equation}\label{eq:lemma-erf-H-to-cite-at-the-end}
    \begin{aligned}
      \bigg| 
          \,\erf\bigg( \frac{\sqrt[4]{k}\,z(t,y)}{\sqrt{2}}\bigg) - H(y-a(t))
        \, 
       \bigg|
       &=
       \frac{2}{\sqrt{\pi}} 
        \bigg|
           \int_{\frac{\sqrt[4]{k}}{\sqrt{2}} \gamma(t) (y-a(t))}^\infty \exp\big( -e^{-\frac{i\pi}{4}}\omega^2\big) d \omega\, e^{-\frac{i\pi}{8}}
        \bigg|
        \\
        &\leq 
        \frac{2}{\sqrt{\pi}} 
        \int_{\frac{\sqrt[4]{k}}{\sqrt{2}} \gamma(t) (y-a(t))}^\infty e^{-\frac{\omega^2}{\sqrt{2}}} d\omega
        = 
        \sqrt{\frac{2}{\pi}} 
        \int_{\frac{\sqrt[4]{k}}{\sqrt{2}} \gamma(t) (y-a(t))}^\infty 
        \frac{1}{\omega}
        \frac{d}{d\omega}
        \bigg[
         - e^{-\frac{\omega^2}{\sqrt{2}}} 
        \bigg]
        d\omega,
    \end{aligned}
    \end{equation}
    where all terms in the last integrand are positive. Since $\omega \geq \frac{\sqrt[4]{k}}{\sqrt{2}} \gamma(t) (y-a(t))$, the following upper bound holds true: 
    \begin{equation*}
        \frac{1}{\omega} \leq 
        \frac{\sqrt{2}}{\sqrt[4]{k}\gamma(t) (y-a(t))}
        \leq  
        \frac{16}{|U_s''(a_0)|^\frac{1}{4}\sqrt[4]{k} \,d}.
    \end{equation*}
    Remarking that $16\sqrt{2}/\sqrt{\pi} \leq 13 $, we eventually obtain
    \begin{align*}
        \bigg| 
          \,\erf\bigg( \frac{\sqrt[4]{k}\,z(t,y)}{\sqrt{2}}\bigg) - H(y-a(t))
        \, 
       \bigg| 
       &\leq 
       \frac{13}{|U_s''(a_0)|^\frac{1}{4}\sqrt[4]{k} \,d}
       \int_{\frac{\sqrt[4]{k}}{\sqrt{2}} \gamma(t) (y-a(t))}^\infty 
        \frac{d}{d\omega}
        \bigg[
         - e^{-\frac{\omega^2}{\sqrt{2}}} 
        \bigg]
        d\omega
        \\
        &\leq 
        \frac{13}{|U_s''(a_0)|^\frac{1}{4}\sqrt[4]{k} \,d}
        \exp
    \bigg(
        -\sqrt{k} \,\gamma(t)^2 \frac{(y-a(t))^2}{2\sqrt{2}} 
    \bigg).
    \end{align*}
    Hence, when $y \geq a_0+\tfrac{d}{2}$, the estimate \eqref{eq:erf-H-estimate-appx} follows from the fact that $\gamma(t)^2>|U_s''(a_0)|^{1/2}/\sqrt{2}$ and $|y-a(t)|\geq d/8$.

    \smallskip
    \noindent 
    For the case $y < a_0 - \frac{d}{4}$, the argument is analogous. Here, $y-a(t) < 0$ and thus $H(y-a(t)) = -1$. The integral representation becomes 
    \begin{equation*}
        \bigg| 
          \erf\bigg( \frac{\sqrt[4]{k}\,z(t,y)}{\sqrt{2}}\bigg) - H(y-a(t))
       \bigg|
       =
       \frac{2}{\sqrt{\pi}} 
        \bigg|
           \int^{\frac{\sqrt[4]{k}}{\sqrt{2}} \gamma(t) (y-a(t))}_{-\infty} e^{ -e^{-\frac{i\pi}{4}}\omega^2 }d \omega
        \bigg|
        =
       \frac{2}{\sqrt{\pi}} 
        \bigg|
           \int_{\frac{\sqrt[4]{k}}{\sqrt{2}} \gamma(t) (a(t)-y)}^{\infty} e^{ -e^{-\frac{i\pi}{4}}\omega^2 }d \omega
        \bigg|.
    \end{equation*}
    Since this expression has the same form as in \eqref{eq:lemma-erf-H-to-cite-at-the-end}, the same procedure applies, thereby completing the proof.
\end{proof}

\begin{lemma}\label{lemma:appx-est-Upsilon-nth-derivative}
    For any $n \in \mathbb N$ with $n \geq 3$ and any $t\in [0, T_{\rm max}]$, the following inequality holds true:
    \begin{equation*}
        \max_{a_0-\frac{d}{2}\leq y \leq a_0 + \frac{d}{2}} 
        \big|\, \Upsilon^{(n)}(\sqrt[4]{k}\, z(t,y) )\, \big|
        \leq 
        (n-3)! \, 2\big(\, 1 + |\,U_s''(a_0)\,|^\frac{n-3}{4} d^{n-3}\,\big) k^\frac{n-3}{4}.
    \end{equation*}
\end{lemma}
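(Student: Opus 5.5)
Since $\Upsilon'''(\zeta)=\sqrt{2/\pi}\,e^{-\zeta^2/2}$, for $n\geq 3$ we have $\Upsilon^{(n)}(\zeta)=\sqrt{2/\pi}\,\frac{d^{n-3}}{d\zeta^{n-3}}e^{-\zeta^2/2}=\sqrt{2/\pi}\,(-1)^{n-3}\mathrm{He}_{n-3}(\zeta)\,e^{-\zeta^2/2}$, where $\mathrm{He}_j$ denotes the probabilists' Hermite polynomial. The plan is to bound the two factors separately at $\zeta=\sqrt[4]{k}\,z(t,y)$ with $y\in[a_0-\frac d2,a_0+\frac d2]$.

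\emph{Gaussian factor.} Write $\zeta=re^{-i\pi/8}$ with $r=\sqrt[4]{k}\,\gamma(t)(y-a(t))\in\mathbb R$. Then $\Re(\zeta^2)=r^2\cos(\pi/4)\geq 0$, so $|e^{-\zeta^2/2}|=e^{-r^2/(2\sqrt2)}\leq 1$. This is the same sector argument already used in \Cref{lemma:estimates-for-us-appx}.

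\emph{Hermite factor.} Use the explicit expansion
\[
\mathrm{He}_j(\zeta)=j!\sum_{l=0}^{\lfloor j/2\rfloor}\frac{(-1)^l}{l!\,(j-2l)!\,2^l}\zeta^{j-2l},
\]
which gives $|\mathrm{He}_j(\zeta)|\leq j!\sum_{l}\frac{|\zeta|^{j-2l}}{l!\,(j-2l)!\,2^l}$. Split according to $|\zeta|\leq 1$ or $|\zeta|>1$. In both cases $|\zeta|^{j-2l}\leq 1+|\zeta|^j$, so
\[
|\mathrm{He}_j(\zeta)|\leq j!\,(1+|\zeta|^j)\sum_l\frac{1}{l!\,(j-2l)!\,2^l}.
\]
The last sum is at most $\sum_{l}\frac{1}{l!\,2^l}\cdot\sup_{m}\frac1{m!}\leq e^{1/2}$. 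Since $\sqrt{2/\pi}\,e^{1/2}\approx1.32\leq2$, this yields
\[
|\Upsilon^{(n)}(\zeta)|\leq 2\,(n-3)!\,(1+|\zeta|^{n-3}).
\]

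\emph{Bounding $|\zeta|$.} From \eqref{eq:assumptions-for-Tmax} we have $\gamma(t)\leq|U_s''(a_0)|^{1/4}$, and on the interval $|y-a(t)|\leq d$ (indeed $\frac d2+\frac d8<d$). Hence $|\zeta|\leq\sqrt[4]{k}\,|U_s''(a_0)|^{1/4}d$, so $|\zeta|^{n-3}\leq k^{\frac{n-3}{4}}|U_s''(a_0)|^{\frac{n-3}{4}}d^{n-3}$. Using $1\leq k^{\frac{n-3}{4}}$ for $k\geq1$ gives the stated bound.

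The only delicate point is keeping the constant at most $2$. For that, the crude estimate of the Hermite sum must be checked with care: the bound $\sum_l\frac{1}{l!(j-2l)!2^l}\leq e^{1/2}$ uses only $(j-2l)!\geq1$. If this turned out too lossy, an alternative is Cauchy's estimate applied to $e^{-w^2/2}$ on a circle of radius $1$ around $\zeta$, although that introduces extra Gaussian growth off the sector. I expect the explicit-expansion route to suffice.
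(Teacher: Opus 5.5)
Your proposal is correct and follows essentially the same route as the paper: the Hermite representation of $\Upsilon^{(n)}$ obtained from $\Upsilon'''(\zeta)=\sqrt{2/\pi}\,e^{-\zeta^2/2}$, the termwise bound $|\zeta|^{j-2l}\le 1+|\zeta|^{j}$ in the explicit expansion, the sector estimate $|e^{-\zeta^2/2}|\le 1$, and $|\zeta|\le \sqrt[4]{k}\,|U_s''(a_0)|^{1/4}d$ from \eqref{eq:assumptions-for-Tmax}. The differences are only cosmetic, and the constant is not delicate: you bound the coefficient sum by $e^{1/2}$ and use $\sqrt{2/\pi}\,e^{1/2}\le 2$, whereas the paper bounds it by $\sum_m 2^{-m}=2$ and uses $\sqrt{2/\pi}\le 1$; you also keep the sign $(-1)^{n-3}$, which the paper omits and which is irrelevant under absolute values.
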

\begin{proof}
   By definition, the third derivative of $\Upsilon$ is given by $\Upsilon^{(3)}(\zeta) = \sqrt{ \frac{2}{\pi}} \, e^{-\frac{\zeta^2}{2}}$. Therefore, for $n\geq 3$ and $\zeta \in \mathbb C$, the $n$-th derivative of $\Upsilon$ can be expressed as:
    \begin{equation}\label{eq:appx-Upsilon^(n)}
         \Upsilon^{(n)}(\zeta ) 
         = 
         \sqrt{ \frac{2}{\pi}} \frac{d^{n-3}}{d \zeta^{n-3}}
         \Big[ e^{-\frac{\zeta^2}{2}} \Big] 
         = 
         \sqrt{ \frac{2}{\pi}}
         \,\,
         \text{He}_{n-3} 
         \big( \zeta \big) 
         \,
         e^{-\frac{\zeta^2}{2}},
    \end{equation}
    where $\text{He}_{n-3}$ is the probabilistic Hermite polynomial of degree $n-3\in \mathbb N$, satisfying
    \begin{equation*}
        \text{He}_{n-3}(\zeta) = (n-3)! \sum_{m=0}^{\lfloor \frac{n-3}{2} \rfloor} \frac{(-1)^m}{m!(n-3-2m)!}\frac{\zeta^{n-3-2m}}{2^m}. 
    \end{equation*}
    We use the following elementary estimate for $\zeta \in \mathbb C$:
    \begin{equation*}
       | \,\text{He}_{n-3}(\zeta) \,|
       \leq (n-3)! \big( 1 + |\zeta|^{n-3} \big)
       \sum_{m = 0}^\infty \frac{1}{2^m}
       = (n-3)!\, 2 \big( 1 + |\zeta|^{n-3} \big)
    \end{equation*}
    We set $\zeta =\sqrt[4]{k}\,z(t,y) = \sqrt[4]{k} \gamma(t) (y-a(t)) e^{-\frac{i\pi}{8}}$ with $\gamma(t) = |\partial_y^2u_s(t,a(t))/2|^{1/4}$ and remark that the exponential term satisfies
    \begin{equation*}
       \Big| \exp \Big( -\sqrt{k} \,\frac{\gamma(t)^2 (y-a(t))^2 e^{-\frac{i\pi}{4}}}{2} \Big) \Big|
       = 
       \exp \Big( -\sqrt{k} \,\frac{\gamma(t)^2 (y-a(t))^2 }{2\sqrt{2}} \Big) 
       \leq 1.
    \end{equation*}
    Additionally, from the assumptions in \eqref{eq:assumptions-for-Tmax}, we have $\gamma(t) \leq |U_s''(a_0)|^{1/4}$ and $a(t) \in [a_0-\tfrac{d}{8},\, a_0+\tfrac{d}{8}]$. Thus, since $|y-a(t)|\leq d$ for any $y \in [a_0-\tfrac{d}{2},\, a_0+\tfrac{d}{2}]$, it follows that
    \begin{align*}
        \max_{a_0-\frac{d}{2}\leq y \leq a_0 + \frac{d}{2}} 
        \big| 
            \,\text{He}_{n-3}\big(\sqrt[4]{k}\, z(t,y) \big)\,
        \big|
        &\leq 
        (n-3)!
        \,2
        \max_{a_0-\frac{d}{2}\leq y \leq a_0 + \frac{d}{2}} 
        \Big( 1 + \gamma(t)^{n-3} |y-a(t)|^{n-3} k^\frac{n-3}{4}  \,\Big)\\
        &\leq 
        (n-3)! \, 2\big(\, 1 + |\,U_s''(a_0)\,|^\frac{n-3}{4} d^{n-3}\,\big) k^\frac{n-3}{4}.
    \end{align*}
    Combining this with the expression for $\Upsilon^{(n)}$ in \eqref{eq:appx-Upsilon^(n)} and noting that $\sqrt{2/\pi}\leq 1$, completes the proof of the lemma.
\end{proof}

\section{Basic estimates for the heat equation}
\noindent 
In this section we provide some elementary estimates  in weighted Sobolev spaces of the heat equation in the half-line. For general initial data $U_s \in L^2(\mathbb R_+)$, the unique solution $u_s \in \mathcal{C}(\mathbb R_+, L^2(\mathbb R_+))\cap L^2(\mathbb R_+, H^1_0(\mathbb R_+))$ of System \eqref{eq:heat-equation} is given by the Dirichlet Green function:
\begin{equation}\label{appx-eq:green-formula}
    u_s(t,y) = 
    \int_0^{\infty}
    G_D(t,y, z) \,U_s (z) \,d z,
    \qquad 
    G_D(t,y,z) = 
    \frac{1}{\sqrt{4\pi t}}\Big( e^{-\frac{|y-z|^2}{4t}} - e^{-\frac{|y+z|^2}{4t}} \Big),
\end{equation}
for any $t,\,y >0$. The following lemma concerns the propagation of the $H^m_\lambda(\mathbb R_+)$-norm.
\begin{lemma}\label{lemma:Green-formula-in-Hnmu}
    Let $m \in \mathbb N_0$, $\lambda > 0$ and $U_s \in H^m_\lambda(\mathbb R_+)$. If $m\geq 1$, assume that $U_s(0) = \dots = U_s^{2\lfloor \frac{m}{2} \rfloor }(0) = 0$. Then $u_s$ defined by \eqref{appx-eq:green-formula} belongs to $\mathcal{C}([0, +\infty[, H^m_\lambda (\mathbb R_+))$ and satisfies
    \begin{equation*}
        \| \,\partial_y^j u_s(t , \cdot ) \,\|_{L^2_\lambda(\mathbb R_+)}
        \leq 
        e^{2 \lambda^2 t} 
        \big\| \,U_{s}^{(j)}\, \|_{L^2_\lambda (\mathbb R_+)},
    \end{equation*}
    for any $t\geq 0$ and $j\in \{0, \dots, m\}$.
\end{lemma}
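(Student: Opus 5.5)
The plan is to handle each derivative separately, working from the Dirichlet Green formula \eqref{appx-eq:green-formula}. The first step is to rewrite $u_s$ via the odd extension. Let $\tilde U_s$ be the odd extension of $U_s$ to $\mathbb R$. Then $u_s(t,\cdot)$ is the restriction to $\mathbb R_+$ of the whole-line heat evolution $e^{t\partial_y^2}\tilde U_s = \Phi_t * \tilde U_s$, where $\Phi_t(y)=(4\pi t)^{-1/2}e^{-y^2/(4t)}$.

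The compatibility conditions $U_s(0)=U_s''(0)=\dots=U_s^{(2\lfloor m/2\rfloor)}(0)=0$ are exactly what is needed for $\tilde U_s\in H^m_{\rm loc}(\mathbb R)$. Only even-order derivatives of the odd extension can jump at $0$, and these are precisely the ones assumed to vanish. Consequently $\partial_y^j\tilde U_s$ is the extension, odd or even according to the parity of $j$, of $U_s^{(j)}$, with no Dirac masses appearing at the origin. This gives
\[
\partial_y^j u_s(t,y)=\int_{\mathbb R}\Phi_t(y-z)\,\partial_z^j\tilde U_s(z)\,dz ,
\]
which reduces the problem to the single estimate for $j=0$ applied to the function $g=U_s^{(j)}$ extended evenly or oddly.

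The second step is the weighted estimate itself. For $y>0$ we have $|\tilde g(z)|=|g(|z|)|$, hence
\[
|\partial_y^j u_s(t,y)|\le \int_0^\infty \big(\Phi_t(y-z)+\Phi_t(y+z)\big)|g(z)|\,dz .
\]
Multiply by $e^{\lambda y}$ and use $e^{\lambda y}\le e^{\lambda |y\mp z|}e^{\lambda z}$. This yields a convolution of $|g|e^{\lambda\cdot}$ with the kernel $\Phi_t(w)e^{\lambda|w|}$. Young's inequality then bounds the $L^2_\lambda$ norm by $\|\Phi_t e^{\lambda|\cdot|}\|_{L^1(\mathbb R)}\,\|g\|_{L^2_\lambda}$.

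The kernel norm is computed from
\[
\int_{\mathbb R}\Phi_t(w)e^{\lambda|w|}\,dw\le 2\int_{\mathbb R}\Phi_t(w)e^{\lambda w}\,dw=2e^{\lambda^2 t}.
\]
The factor $2$ has to be absorbed. The cleanest route is to avoid the crude splitting: bound $\Phi_t(y-z)+\Phi_t(y+z)$ against the even kernel and use $\int\Phi_t e^{\lambda |w|}\le 2e^{\lambda^2t}$. Alternatively, one can use $\int\Phi_t(w)\cosh(\lambda w)\,dw=e^{\lambda^2 t}$ together with Schur's test on the half-line operator with kernel $\Phi_t(y-z)+\Phi_t(y+z)$, weighted by $e^{\lambda(y-z)}$. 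Since $2\le e^{\lambda^2 t}$ fails for small $t$, the Schur bound with $\cosh$ is what I would aim for. Both row and column sums are bounded by $\int_{\mathbb R}\Phi_t(w)e^{\lambda|w|}dw\le 2e^{\lambda^2t}$, which is $\le e^{2\lambda^2 t}$ only when $t\ge \ln 2/\lambda^2$. For small $t$ I would therefore use the finer bound $\int\Phi_te^{\lambda|w|}\le 1+2\lambda\sqrt{t/\pi}\,e^{\lambda^2t}$ and check that it is $\le e^{2\lambda^2 t}$. I expect this constant bookkeeping to be the main obstacle, and it may require a slightly more careful pairing of the $y-z$ and $y+z$ contributions rather than the triangle inequality.

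The final step is continuity in time, $u_s\in\mathcal C([0,\infty[,H^m_\lambda)$. I would prove it by density: approximate $U_s$ by compactly supported smooth functions satisfying the same boundary conditions, for which continuity is classical, and then use the uniform bound just proved on bounded time intervals.
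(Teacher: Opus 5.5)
Your set-up is the same as the paper's: odd extension, the compatibility conditions giving $\tilde U_s\in H^m$, commuting $\partial_y^j$ with the heat kernel, transferring the weight, applying Young's inequality, and getting continuity by density. The gap is exactly where you suspected, in the constant, and it cannot be closed along the lines you sketch. After the triangle inequality and $e^{\lambda y}\le e^{\lambda|y\mp z|}e^{\lambda z}$, the best constant this route can give is
\[
\int_{\mathbb R}\Phi_t(w)e^{\lambda|w|}\,dw=e^{\lambda^2t}\bigl(1+\mathrm{erf}(\lambda\sqrt t)\bigr)=1+\tfrac{2}{\sqrt\pi}\,\lambda\sqrt t+O(t).
\]
For small $t$ this is strictly larger than $e^{2\lambda^2t}=1+2\lambda^2t+O(t^2)$, because $\sqrt t\gg t$. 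So no ``finer bound'' on this kernel norm can be checked to be $\le e^{2\lambda^2 t}$. The bound you propose, $1+2\lambda\sqrt{t/\pi}\,e^{\lambda^2t}$, is in fact below the exact value by $\lambda^2t+O(t^{3/2})$, so it is not even true.

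The Schur test with constant test function and one-sided weight $e^{\lambda(y-z)}$ has the same defect. The row sums are $\le e^{\lambda^2 t}$, but the column sum at $z=0$ equals $e^{\lambda^2t}(1+\mathrm{erf}(\lambda\sqrt t))$, so the bound is $e^{\lambda^2 t}\sqrt{1+\mathrm{erf}(\lambda\sqrt t)}=1+O(\sqrt t)$ again. The loss comes from treating the reflected term $\Phi_t(y+z)$ only through absolute values. The paper's own proof does exactly this computation and then asserts $e^{\lambda^2t}(1+\mathrm{erf}(\lambda\sqrt t))\le e^{2\lambda^2t}$. That is false for small $t$: at $\lambda^2t=10^{-2}$ the left side is about $1.124$ and the right side about $1.020$. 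So you cannot lean on it.

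The missing idea is to use the boundary behaviour of $\partial_y^j u_s$ rather than $|\tilde g(z)|=|g(|z|)|$.

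\emph{Even $j$.} The kernel is Dirichlet, and $0\le\Phi_t(y-z)-\Phi_t(y+z)\le\Phi_t(y-z)$ for $y,z\ge0$. So you may write $e^{\lambda y}=e^{\lambda(y-z)}e^{\lambda z}$ and use the kernel $\Phi_t(w)e^{\lambda w}$, whose $L^1(\mathbb R)$-norm is exactly $e^{\lambda^2t}$.

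\emph{Odd $j$.} The Neumann reflection cannot be discarded. The clean argument, which also covers even $j$, is an energy estimate for $w:=e^{\lambda y}\partial_y^j u_s$. This function solves $\partial_t w=\partial_y^2w-2\lambda\partial_y w+\lambda^2 w$. For $t>0$ the even-order derivatives of the odd smooth function $\tilde u_s(t,\cdot)$ vanish at $0$. Hence $w(t,0)=0$ for even $j$, and $\partial_y w(t,0)=\lambda w(t,0)$ for odd $j$, since then $\partial_y^{j+1}u_s(t,0)=0$. Integrating by parts gives
\[
\tfrac12\tfrac{d}{dt}\|w\|_{L^2(\mathbb R_+)}^2=-\|\partial_y w\|_{L^2(\mathbb R_+)}^2+\lambda^2\|w\|_{L^2(\mathbb R_+)}^2+\lambda|w(t,0)|^2-\operatorname{Re}\bigl(\partial_y w(t,0)\,\overline{w(t,0)}\bigr).
\]
The two boundary terms cancel in both cases. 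This yields $\|\partial_y^j u_s(t)\|_{L^2_\lambda}\le e^{\lambda^2t}\|U_s^{(j)}\|_{L^2_\lambda}$, which is stronger than the claimed $e^{2\lambda^2 t}$.

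Justify this first for restrictions of odd $\mathcal C^\infty_c(\mathbb R)$ data, which are dense in the compatible subspace of $H^m_\lambda$. Then pass to the limit with your density argument. That argument also gives the time continuity, for which any locally bounded constant, even $e^{\lambda^2t}(1+\mathrm{erf}(\lambda\sqrt t))$, suffices.
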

\begin{proof}
We denote by $\tilde{U}_s$ and $\tilde{u}_s$ the odd extensions of $U_s$ and $u_s$ to the whole line $y \in \mathbb{R}$. Due to the compatibility conditions of $U_s$ at $y=0$, we have $\tilde U_s \in H^m(\mathbb{R})$. Since $\tilde{u}_s$ solves the heat equation on the whole line with initial datum $\tilde{U}_s$, it follows that $\tilde{u}_s \in \mathcal{C}(\mathbb{R}_+, H^m(\mathbb{R}))$. Moreover, $\tilde{u}_s$ and its derivatives can be expressed using the heat kernel:
\begin{equation*}
    \partial_y^j
    \tilde u_s(t,y) = \frac{1}{\sqrt{4\pi t}} \int_{\mathbb R} e^{-\frac{|y-z|^2}{4t}} \tilde U_s^{(j)}(z) dz,
\end{equation*}
for all $t>0$, $y \in  \mathbb R$ and $j \in \{0, \dots,m \}$. We apply the triangular inequality to the following expression:
    \begin{align*}
       \big|\,e^{\lambda |y|}  \partial_y^j \tilde u_s(t,y)\,\big| 
       &= 
       \bigg|\,
       \frac{1}{\sqrt{4\pi t}} \int_{\mathbb R} 
       e^{-\frac{|y-z|^2}{4t} +\lambda |y- z +z |} \tilde U_s^{(j)} (z) d z 
       \,\bigg|
       \leq 
       \int_{\mathbb R} K_\lambda (t, y-z)
       e^{\lambda |z| } |\, \tilde U_s^{(j)}(z) |dz
       ,
    \end{align*}
    where $K_\lambda$ denotes the kernel $K_\mu(t,z ):= \frac{1}{\sqrt{4\pi t}} e^{-\frac{z^2}{4t}+ \lambda |z|}$, for any $t>0$ and $z \in \mathbb R$. The $L^1$-norm of $K_\lambda$ satisfies
    \begin{align*}
        \int_{\mathbb R} | \, K_\lambda (t, z) \, |dz 
        &=
        \frac{1}{\sqrt{\pi t}}
        \int_{0}^\infty 
        e^{-\frac{z^2}{4t}+ \lambda z}dz
        = 
        \frac{e^{\lambda^2 t}}{\sqrt{\pi t}}
        \int_{0}^\infty 
        e^{-\left( \frac{z}{2 \sqrt{t}}  - \lambda \sqrt{t} \right)^2}dz = 
        e^{\lambda^2 t}
        \frac{2}{\sqrt{\pi}}
        \int_{-\lambda \sqrt{t}}^\infty 
        e^{-w^2}dw\\
        &
        =
        e^{\lambda^2 t}
        \bigg( 
            \frac{2}{\sqrt{\pi}}
            \int_{0}^\infty 
            e^{-w^2}dw
            +
            \frac{2}{\sqrt{\pi}}
            \int_{0}^{\lambda \sqrt{t}}
            e^{-w^2}dw
        \bigg)
        =
        e^{\lambda^2 t} \Big( 1 + \erf\big( \lambda \sqrt{t}  \big) \Big)\leq e^{2 \lambda^2 t} .
    \end{align*}
    Thanks to Young's inequality, $\partial_y^j \tilde u_s(t, \cdot)  \in  L^2_\lambda (\mathbb R)$ for any $t>0$ and satisfies 
    \begin{align*}
        \big\| \, \partial_y^j \tilde u_s(t, \cdot)\, \big\|_{L^2_\lambda(\mathbb R)}
        \leq 
        \| K_\lambda (t, \cdot ) \|_{L^1(\mathbb R)} 
        \|  \,\tilde U_s^{(j)} \|_{L^2_\lambda (\mathbb R)}
        \leq  
        2 \, e^{2 \lambda^2 t} \big\| \,U_s^{(j)} \|_{L^2_\lambda (\mathbb R_+)}.
    \end{align*}
     It remains to prove that $\tilde{u}_s$ in $\mathcal{C}([0, \infty[, H^m_\lambda(\mathbb{R}))$. Thanks to Scheffé's lemma, $K_\lambda \in \mathcal{C}( \, ]0, \infty[, L^1(\mathbb R))$ which implies that $\partial_y^j \tilde{u}_s$ in $\mathcal{C}(]0, \infty[, L^2_\lambda(\mathbb{R}))$ for any $j \in \{0, \dots, m\}$. As $t\to 0+$, $K_\lambda$ behaves like the heat kernel and converges distributionally to the Dirac-delta:
    \begin{equation*}
        \lim_{t\to 0+ } K_\lambda(t, \cdot ) = \delta_0,\quad \text{in} \quad \mathcal{D}'(\mathbb R).
    \end{equation*}
    The continuity at $t = 0$ is hence achieved by the density of $\mathcal{D}(\mathbb R)$ in $H^m_\lambda(\mathbb R)$ and the boundness of $\| K_\lambda(t, \cdot) \|_{L^1(\mathbb R)}$ around $t= 0$. Details are left to the reader.
\end{proof}

\section{A comparison with the regular and shear-layer velocities}\label{sec:comparison-with-shear-layer}
    
\noindent 
In this final section, we compare our matched asymptotic profile $\phi_{\text{inn}, k}+ \phi_{\text{out}, k}$ introduced in \Cref{def:inner-outer-profiles} with the ``regular'' and ``shear-layer'' profiles $v_\ee^{reg}+ v_\ee^{sl}$ proposed by G\'erard-Varet and Dormy in \cite{MR2601044}. 

\smallskip 
\noindent 
We first introduce some notation from Section 4 of \cite{MR2601044}, supported by explicit formulas derived in \cite{DeAnnaKortum2025}. Let $\ee = 1/k >0$ and and let $\tau \in \mathbb C$ be as in (1.7) of \cite{MR2601044}:  ${\rm Im}(\tau)<0$ and there exists a function $W \in \mathcal{C}^\infty(\mathbb R, \mathbb C)$ satisfying 
\begin{equation*}
    (\tau-z^2)^2 \frac{d}{dz} W(z) + i \frac{d^3}{dz^2} \big( (\tau-z^2) W(z) ) = 0,\qquad \lim_{z \to -\infty} W(z) = 0,\qquad \lim_{z \to +\infty} W(z) = 1.
\end{equation*}
In Corollary 1.2 of \cite{DeAnnaKortum2025}, we showed that there is a unique solution $(\tau, W)$ given by $\tau = -e^{\frac{i \pi}{4}}$ and
\begin{equation*}
    W(z) = \frac{\Upsilon\big( \zeta \big) }{1 +\zeta^2  } = 
     \frac{1}{\sqrt{2\pi}}\frac{\zeta}{1 +\zeta^2}\, e^{-\frac{\zeta^2}{2}} +
    \frac{1}{2}  
    \left(1 + \erf\left( \frac{\zeta}{\sqrt{2}} \right) \right)
    ,\qquad \text{with }\zeta = e^{-\frac{i \pi}{8}} z \in \mathbb C.
\end{equation*}
The function $\sigma_k(t)$ of \Cref{def:inner-outer-profiles} is in \cite{MR2601044} denoted by $i\, \ee^{-1} \omega(\ee, t)$, as $\omega(\ee, t)$ is defined as
\begin{equation*}
    \omega(\ee, t) :=
    -u_s(t,a(t)) + 
    \frac{\ee^{1/2}}{\sqrt{2}}
    |\partial_y^2 u_s(t,a(t))|^\frac{1}{2}\tau 
    = 
    -u_s(t,a(t)) + 
    e^{i\frac{5 \pi}{4}}
    \sqrt{\frac{|\partial_y^2 u_s(t,a(t))|}{2k}}
    = 
    \frac{\sigma_k(t)}{i k}.
\end{equation*}
The regular and shear layer velocities are defined in Section 4 of \cite{MR2601044} as follows:
\begin{equation}\label{eq:shear-and-regular-layers}
\begin{alignedat}{4}
    &v_\ee^{reg}(t,y) 
    &&= 
    H(y-a(t))
    \bigg(  
        u_s(t,y) 
        -u_s(t,a(t)) + 
        \frac{\ee^{1/2}}{\sqrt{2}}
        |\partial_y^2 u_s(t,a(t))|^\frac{1}{2}\tau 
    \bigg),\\
    &v_\ee^{sl}(t,y) 
    &&= 
    \frac{\ee^{1/2}}{\sqrt{2}}
    \varphi(y-a(t)) 
    |\partial_y^2 u_s(t,a(t))|^{1/2}
    V
    \left(  
      |\partial_y^2 u_s(t,a(t))|^{1/4} \frac{y-a(t)}{(2\ee)^{1/4}}
    \right),
\end{alignedat}    
\end{equation}
where $\varphi $ is a smooth truncation function near $0$ and $V(z) =(\tau-\zeta^2)(W(z)-H(z))$ for any $z \in \mathbb R$. 

\noindent 
We compare these profiles with our matched asymptotic construction $\phi_{{\rm inn},k} + \phi_{{\rm out},k}$ via the following lemma.

\begin{lemma}
    Replace $\varphi(y-a(t))$ in \eqref{eq:shear-and-regular-layers} with  $\chi(y)$ of \Cref{def:uink-into}. Define $v_\ee^{new} : [0, T_{\rm max}] \times \mathbb R_+ \to \mathbb C$
    \begin{equation*}
         v_\ee^{new}(t,y) 
         :=
         \chi(y)
         H(y-a(t))
         \bigg( 
            u_s(t,a(t)) + 
            \frac{\partial_y^2 u_s(t,y(a))}{2}(y-a(t))^2
            -
            u_s(t,y)
         \bigg).
\end{equation*}
Then, for any $(t,y) \in [0, T_{\rm max}] \times \mathbb R_+$, the following identity holds true:
\begin{equation}\label{eq:relation-our-profiles-GV-profiles}
    \phi_{{\rm inn}, k}(t,y)+\phi_{{\rm out}, k}(t,y) 
    = 
    \Big( 
        v_\ee^{reg}(t,y)+ 
        v_\ee^{sl}(t,y) + 
        v_\ee^{new}(t,y)
    \Big) e^{i \ee^{-1}\int_0^t \omega(\ee, s) ds}.
\end{equation}
\end{lemma}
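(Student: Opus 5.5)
The identity is purely algebraic, so the plan is to expand both sides pointwise, after factoring out the common exponential. From the formula for $\omega(\ee,t)$ just above, $i\ee^{-1}\omega(\ee,t)=ik\,\sigma_k(t)/(ik)=\sigma_k(t)$. Hence $e^{i\ee^{-1}\int_0^t\omega}=e^{\int_0^t\sigma_k}$, which is exactly the exponential factor in $\phi_{{\rm inn},k}$ and $\phi_{{\rm out},k}$. It therefore suffices to compare the prefactors. I will use the shorthand $\alpha,\beta,\gamma$ of \eqref{eq:notation-alpha-beta-gamma}, so that $\frac{\ee^{1/2}}{\sqrt2}|\partial_y^2u_s(t,a(t))|^{1/2}\tau=\frac{e^{i5\pi/4}\gamma^2}{\sqrt k}$. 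Consequently
\[
v^{reg}_\ee=H(y-a)\Big(u_s-\alpha+\tfrac{e^{i5\pi/4}\gamma^2}{\sqrt k}\Big)=H(y-a)\Big(u_s+\tfrac{\sigma_k}{ik}\Big).
\]

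Next I rewrite the shear-layer term. I first check the argument of $V$: $|\partial_y^2u_s|^{1/4}(y-a)/(2\ee)^{1/4}=\gamma\sqrt[4]{k}\,(y-a)$. With $\zeta=e^{-i\pi/8}\gamma\sqrt[4]k(y-a)=\sqrt[4]k\,z(t,y)$, the definitions give $V=(\tau-\zeta^2)(W-H)$ with $W=\Upsilon(\zeta)/(1+\zeta^2)$. The key point is that $\tau-\zeta^2=-e^{i\pi/4}(1+e^{-i\pi/4}\zeta^2)$. I need to reconcile this with the paper's normalisation: the factor in $W$ should be exactly $\tau-z^2$ expressed through $\zeta$, namely $-e^{i\pi/4}(1+\zeta^2)$ if $\zeta^2=e^{-i\pi/4}z^2$. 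This is the step to check carefully, since the convention ($\zeta^2$ versus $z^2$) must make $(\tau-z^2)/(1+\zeta^2)$ a constant, here $-e^{i\pi/4}$. Granting this, $V=-e^{i\pi/4}\big(\Upsilon(\zeta)-(1+\zeta^2)H\big)$. Multiplying by $\frac{\ee^{1/2}}{\sqrt2}|\partial_y^2u_s|^{1/2}=\gamma^2/\sqrt k$ and using $-e^{i\pi/4}=e^{i5\pi/4}$ gives
\[
v^{sl}_\ee=\chi\,\frac{e^{i5\pi/4}\gamma^2}{\sqrt k}\big(\Upsilon(\sqrt[4]k z)-(1+\sqrt k z^2)H(y-a)\big),
\]
after replacing $\varphi$ by $\chi$. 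The first piece is exactly $\phi_{{\rm inn},k}e^{-\int\sigma_k}$. By the computation in the proof of \Cref{prop:remainder-exact-form}, the second piece is $-\chi H\big(\tfrac{e^{i5\pi/4}\gamma^2}{\sqrt k}+\beta(y-a)^2\big)$.

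Finally, I add the three prefactors. Writing $v^{reg}=(1-\chi)H(u_s+\sigma_k/ik)+\chi H(u_s+\sigma_k/ik)$, the first part is $\phi_{{\rm out},k}e^{-\int\sigma_k}$. The remaining terms are
\[
\chi H\Big(u_s-\alpha+\tfrac{e^{i5\pi/4}\gamma^2}{\sqrt k}\Big)-\chi H\Big(\tfrac{e^{i5\pi/4}\gamma^2}{\sqrt k}+\beta(y-a)^2\Big)+\chi H\big(\alpha+\beta(y-a)^2-u_s\big)=0,
\]
where the last bracket is $v^{new}_\ee$ (recall $\tfrac12\partial_y^2u_s(t,a)=\beta$). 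This yields \eqref{eq:relation-our-profiles-GV-profiles}. The main obstacle is the bookkeeping of phases and powers in the $V$ step, particularly the consistency of the $\zeta$ versus $z$ convention in $W$; I would verify it first at $t=0$ by comparing the asymptotics of both sides as $y\to\pm\infty$ within the support of $\chi$.
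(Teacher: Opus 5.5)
Your proof is correct and follows the paper's route. You split $v^{reg}_\ee$, equivalently $\phi_{{\rm out},k}$, with $\chi$, rewrite $V(z)=(\tau-z^2)(W(z)-H(z))=e^{i\frac{5\pi}{4}}\big(\Upsilon(\zeta)-(1+\zeta^2)H(z)\big)$, and check that the leftover $\chi H$-terms cancel against $v^{new}_\ee$.

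The step you hedge on is not actually an obstacle. Since $\zeta=e^{-i\pi/8}z$ gives $\zeta^2=e^{-i\pi/4}z^2$, one has $\tau-z^2=e^{i\frac{5\pi}{4}}(1+\zeta^2)$ identically, so nothing needs to be granted and no asymptotic check is needed. This is the reading the paper's proof uses; the $\tau-\zeta^2$ in the displayed definition of $V$ is a typo.
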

\begin{proof}
We first rewrite the outer layer $\phi_{{\rm out}, k}$ of \Cref{def:inner-outer-profiles} as
\begin{equation}\label{eq:final-sec-phi-out-v-reg}
\begin{aligned}
    \phi_{{\rm out}, k}(t,y) 
    &= 
    \big(1 - \chi(y)\big)
        H\big(y-a(t)\big)
        \bigg( 
            u_s(t,y)
            +
            \frac{\sigma_k(t)}{\im k}
         \bigg)
        e^{\int_0^t \sigma_k(\tau) d\tau }
        \\
        &=v_{\ee}^{reg}(t,y) e^{i \ee^{-1}\int_0^t \omega(\ee, s) ds} 
        -
        \chi(y) H\big(y-a(t)\big)
        \bigg( 
            u_s(t,y)
            +
            \frac{\sigma_k(t)}{\im k}
         \bigg)
        e^{\int_0^t \sigma_k(\tau) d\tau }.
\end{aligned}
\end{equation}
Next, we recast $V(z)$ in terms of $\Upsilon(\zeta)$ with respect to the relation $\zeta = e^{-\frac{i \pi}{8}} z\in \mathbb C$. Recalling that $\tau = -e^{\frac{i \pi}{4}} = e^{i \frac{5 \pi}{4}}$:
\begin{equation*}
\begin{aligned}
        V(z) 
        &= 
        \big( \tau -z^2 \big) \Big( W(z) - H(z) \Big) = 
        e^{i\frac{5 \pi}{4}} \big( 1+e^{ - i \frac{\pi}{4}} z^2  \big) 
        \big( W(z) - H(z) \big) 
        \\
        &
        =
        e^{i\frac{5\pi}{4}}
        \big( 1 + \zeta^2 \big) 
        \bigg( 
        \frac{\Upsilon \big( \zeta \big) }{1+ \zeta^2}
           - H(z)  
        \bigg)
        = 
        e^{i\frac{5\pi}{4}} \bigg( \Upsilon \big( \zeta \big) - (1+\zeta^2) H(z) \bigg).
\end{aligned}
\end{equation*}
Hence, using the short notation $\beta(t) = \partial_y^2 u_s(t,a(t))/2<0$, we deduce from \eqref{eq:shear-and-regular-layers} (with $\chi$ instead of $\varphi$) that 
\begin{align*}
    v^{\rm sl}_\ee (t,y) 
    &
    e^{i \ee^{-1}\int_0^t \omega(\ee, s) ds} 
    =
    \frac{\ee^{1/2}}{\sqrt{2}}
    \chi(y) 
    |\beta(t)|^{1/2}
    V
    \left(  
      |\beta(t)|^{1/4} \frac{y-a(t)}{\ee^{1/4}}
    \right)
     e^{i \ee^{-1}\int_0^t \omega(\ee, s) ds}
     \\
    &= 
    \sqrt{\frac{|\beta(t)|}{2k}}
    \chi(y) 
    e^{i\frac{5\pi}{4}}
    \left(  
     \Upsilon \left( \sqrt[4]{|\beta(t)|k} (y-a(t))  e^{-\frac{i\pi}{8}} \right) 
     -
     \Big( 1 + e^{-i \frac{\pi}{4}}\sqrt{k |\beta(t)|} (y-a(t))^2 \Big) 
    \right)
     e^{\int_0^t \sigma_k(\tau) d\tau }.
\end{align*}
From the definition of $\phi_{\rm inn, k}$ in \Cref{def:inner-outer-profiles} we finally derive:
\begin{align*}
    \phi_{\text{inn}, k} (t,y)
    = 
    \,
    &v^{\rm sl}_\ee (t,y) e^{i \ee^{-1}\int_0^t \omega(\ee, s) ds} 
    +
    \\
    &+\chi(y) H(y-a(t)) 
    \Bigg(
              \frac{\sigma_k(t)}{ik} + u_s(t,a(t))
              +
              \frac{\partial_y^2 u_s(t,a(t))}{2}
              (y-a(t))^2
    \Bigg)e^{\int_0^t \sigma_k(s) ds }.
\end{align*}
By summation, this last identity with \eqref{eq:final-sec-phi-out-v-reg}, we establish \eqref{eq:relation-our-profiles-GV-profiles}. This concludes the proof of the lemma.
\end{proof} 
\begin{remark}
We conclude with a comment on the role of the additional term $v_\ee^{\mathrm{new}}$ in \eqref{eq:relation-our-profiles-GV-profiles}. Since $\phi_{{\rm inn}, k}$ and $\phi_{{\rm out}, k}$ are smooth in $y=a(t)$ for $t>0$, if one considers just the approximate profile $v^{sl}_\ee  + v^{reg}_\ee $ of \cite{MR2601044} then 
\begin{equation*}
\partial_y^4 \bigl(v^{sl}_\ee + v^{reg}_\ee\bigr)\big|_{y = a(t)}
 = \partial_y^4  v^{new}_\ee\big|_{y = a(t)} + \partial_y^4 
 \bigl(\phi_{{\rm inn}, k}  + \phi_{{\rm out}, k} \bigr)\big|_{y = a(t)}
 \sim   \partial_y^3 u_s(t,a(t))  \, \delta_{a(t)} .
\end{equation*}
As a consequence, whenever $\partial_y^3 u_s(t,a(t)) \neq 0$ for some $t>0$, $ v^{sl}_\ee + v^{reg}_\ee$ is at most in $W^{3,\infty}$ near $y = a(t)$.  In other words, if $\partial_y^3 u_s(t,a(t)) \neq 0$, one cannot rely solely on the profile of \cite{MR2601044}  to establish ill-posedness in Sobolev spaces with $y$-regularity higher than $W^{3,\infty}$ for $v$.

\noindent 
Nevertheless, we remark that, in the autonomous case $u_s(t,y) = U_s(y) $ with $U_s$ quadratic around $y = a_0$, this singularity never occurs ($v_\ee^{new} \equiv 0$). By contrast, if $u_s$ satisfies the heat equation, the singularity does occur.

\smallskip 
\noindent 
On the other hand, this issue is resolved by our work, introducing the correction $v^{\mathrm{new}}_\ee$. The mechanism becomes much more transparent when working with our profiles $\phi_{{\rm inn}, k}$ and $\phi_{{\rm out}, k}$ in \Cref{def:inner-outer-profiles}, which are smooth by construction. This is one of the reasons motivating our use of matched asymptotics.
\end{remark}

\bigskip
\noindent 
\textbf{Acknowledgements:} We would like to thank David Gérard-Varet for his invitation and warm hospitality at the UFR de Mathématiques and IMJ-PRG, Université Paris Cité. We are also grateful to David Gérard-Varet for several fruitful discussions on the subject of this work. This research did not receive any specific grant from funding agencies in the public, commercial, or not-for-profit sectors.

\bigskip
\noindent 
\textbf{Author contributions:}\\  
Francesco De Anna: conceptualization, methodology, formal analysis, writing – original draft.\\
Joshua Kortum: conceptualization, formal analysis, writing - review \& editing.

\bigskip
\noindent 
\textbf{Declaration of competing interests:} \\
No known competing interests.

\bigskip
\noindent 
\textbf{Declaration of generative AI and AI-assisted technologies in the manuscript preparation process:}\\
During the preparation of this work the authors used M365 Copilot (based on the GPT-5 reasoning model) to correct orthographic, grammatical, or typographical inaccuracies. After using this tool, the authors reviewed and edited the content as needed and take full responsibility for the content of the published article.

\bigskip
\noindent
\textbf{Data availability:}\\
No data was used for the research described in the article.

\bibliographystyle{abbrv}
\bibliography{literature}

@article{DeAnnaKortum2025,
  author        = {Francesco De Anna and Joshua Kortum},
  title         = {Linear Instability of the Prandtl Equations via Hypergeometric Functions and the Harmonic Oscillator},
  journal       = {arXiv preprint arXiv:2503.02417},
  year          = {2025},
  doi           = {10.48550/arXiv.2503.02417},
  primaryClass  = {math.AP},
  eprint        = {2503.02417},
  archivePrefix = {arXiv},
  url           = {https://doi.org/10.48550/arXiv.2503.02417}
}

@book{holmes2012introduction,
  title={Introduction to perturbation methods},
  author={Holmes, Mark H},
  year={2012},
  publisher={Springer Science \& Business Media}
}

@article {MR2601044,
    AUTHOR = {G\'erard-Varet, David and Dormy, Emmanuel},
     TITLE = {On the ill-posedness of the {P}randtl equation},
   JOURNAL = {J. Amer. Math. Soc.},
  FJOURNAL = {Journal of the American Mathematical Society},
    VOLUME = {23},
      YEAR = {2010},
    NUMBER = {2},
     PAGES = {591--609},
      ISSN = {0894-0347,1088-6834},
   MRCLASS = {35Q30 (35B65 76D05 76D10)},
  MRNUMBER = {2601044},
MRREVIEWER = {Ben\ W.\ Schweizer},
       DOI = {10.1090/S0894-0347-09-00652-3},
       URL = {https://doi.org/10.1090/S0894-0347-09-00652-3},
}

@article {MR3864769,
    AUTHOR = {Liu, Cheng-Jie and Xie, Feng and Yang, Tong},
     TITLE = {A note on the ill-posedness of shear flow for the {MHD}
              boundary layer equations},
   JOURNAL = {Sci. China Math.},
  FJOURNAL = {Science China. Mathematics},
    VOLUME = {61},
      YEAR = {2018},
    NUMBER = {11},
     PAGES = {2065--2078},
      ISSN = {1674-7283,1869-1862},
   MRCLASS = {35Q35 (35B35 76D10 76W05)},
  MRNUMBER = {3864769},
       DOI = {10.1007/s11425-017-9306-0},
       URL = {https://doi.org/10.1007/s11425-017-9306-0},
}

@article {MR3925144,
    AUTHOR = {Dietert, Helge and G\'erard-Varet, David},
     TITLE = {Well-posedness of the {P}randtl equations without any
              structural assumption},
   JOURNAL = {Ann. PDE},
  FJOURNAL = {Annals of PDE. Journal Dedicated to the Analysis of Problems
              from Physical Sciences},
    VOLUME = {5},
      YEAR = {2019},
    NUMBER = {1},
     PAGES = {Paper No. 8, 51},
      ISSN = {2524-5317,2199-2576},
   MRCLASS = {76D03 (35B30 35Q35 76D10)},
  MRNUMBER = {3925144},
MRREVIEWER = {S\'ebastien\ J.\ Boyaval},
       DOI = {10.1007/s40818-019-0063-6},
       URL = {https://doi.org/10.1007/s40818-019-0063-6},
}

@article {MR3670620,
    AUTHOR = {Liu, Cheng-Jie and Yang, Tong},
     TITLE = {Ill-posedness of the {P}randtl equations in {S}obolev spaces
              around a shear flow with general decay},
   JOURNAL = {J. Math. Pures Appl. (9)},
  FJOURNAL = {Journal de Math\'ematiques Pures et Appliqu\'ees. Neuvi\`eme
              S\'erie},
    VOLUME = {108},
      YEAR = {2017},
    NUMBER = {2},
     PAGES = {150--162},
      ISSN = {0021-7824,1776-3371},
   MRCLASS = {35Q35 (35M13 76D03 76D10)},
  MRNUMBER = {3670620},
MRREVIEWER = {Beno\^it\ P.\ Desjardins},
       DOI = {10.1016/j.matpur.2016.10.014},
       URL = {https://doi.org/10.1016/j.matpur.2016.10.014},
}

@article {MR3458159,
    AUTHOR = {Liu, Cheng-Jie and Wang, Ya-Guang and Yang, Tong},
     TITLE = {On the ill-posedness of the {P}randtl equations in
              three-dimensional space},
   JOURNAL = {Arch. Ration. Mech. Anal.},
  FJOURNAL = {Archive for Rational Mechanics and Analysis},
    VOLUME = {220},
      YEAR = {2016},
    NUMBER = {1},
     PAGES = {83--108},
      ISSN = {0003-9527,1432-0673},
   MRCLASS = {35Q30 (35B30 35R25 76D09)},
  MRNUMBER = {3458159},
MRREVIEWER = {Amin\ Esfahani},
       DOI = {10.1007/s00205-015-0927-1},
       URL = {https://doi.org/10.1007/s00205-015-0927-1},
}

@article {MR2952715,
    AUTHOR = {G\'erard-Varet, D. and Nguyen, T.},
     TITLE = {Remarks on the ill-posedness of the {P}randtl equation},
   JOURNAL = {Asymptot. Anal.},
  FJOURNAL = {Asymptotic Analysis},
    VOLUME = {77},
      YEAR = {2012},
    NUMBER = {1-2},
     PAGES = {71--88},
      ISSN = {0921-7134,1875-8576},
   MRCLASS = {35Q53 (35R25)},
  MRNUMBER = {2952715},
       DOI = {10.3233/asy-2011-1075},
       URL = {https://doi.org/10.3233/asy-2011-1075},
}

@incollection {MR3916787,
    AUTHOR = {Maekawa, Yasunori and Mazzucato, Anna},
     TITLE = {The inviscid limit and boundary layers for {N}avier-{S}tokes
              flows},
 BOOKTITLE = {Handbook of mathematical analysis in mechanics of viscous
              fluids},
     PAGES = {781--828},
 PUBLISHER = {Springer, Cham},
      YEAR = {2018},
      ISBN = {978-3-319-13344-7; 978-3-319-13343-0; 978-3-319-13345-4},
   MRCLASS = {76D05 (35Q30 76D09 76D10)},
  MRNUMBER = {3916787},
       DOI = {10.1007/978-3-319-13344-7\_1},
       URL = {https://doi.org/10.1007/978-3-319-13344-7_1},
}

@article {MR4465902,
    AUTHOR = {Li, Wei-Xi and Masmoudi, Nader and Yang, Tong},
     TITLE = {Well-posedness in {G}evrey function space for 3{D} {P}randtl
              equations without structural assumption},
   JOURNAL = {Comm. Pure Appl. Math.},
  FJOURNAL = {Communications on Pure and Applied Mathematics},
    VOLUME = {75},
      YEAR = {2022},
    NUMBER = {8},
     PAGES = {1755--1797},
      ISSN = {0010-3640,1097-0312},
   MRCLASS = {76D10},
  MRNUMBER = {4465902},
MRREVIEWER = {Xinhui\ Si},
       DOI = {10.1002/cpa.21989},
       URL = {https://doi.org/10.1002/cpa.21989},
}

@article {MR3855356,
    AUTHOR = {G\'erard-Varet, David and Maekawa, Yasunori and Masmoudi,
              Nader},
     TITLE = {Gevrey stability of {P}randtl expansions for 2-dimensional
              {N}avier-{S}tokes flows},
   JOURNAL = {Duke Math. J.},
  FJOURNAL = {Duke Mathematical Journal},
    VOLUME = {167},
      YEAR = {2018},
    NUMBER = {13},
     PAGES = {2531--2631},
      ISSN = {0012-7094,1547-7398},
   MRCLASS = {35Q30 (35Q35 76D05)},
  MRNUMBER = {3855356},
MRREVIEWER = {Luc\ Paquet},
       DOI = {10.1215/00127094-2018-0020},
       URL = {https://doi.org/10.1215/00127094-2018-0020},
}

@article {MR3429469,
    AUTHOR = {Gerard-Varet, David and Masmoudi, Nader},
     TITLE = {Well-posedness for the {P}randtl system without analyticity or
              monotonicity},
   JOURNAL = {Ann. Sci. \'Ec. Norm. Sup\'er. (4)},
  FJOURNAL = {Annales Scientifiques de l'\'Ecole Normale Sup\'erieure.
              Quatri\`eme S\'erie},
    VOLUME = {48},
      YEAR = {2015},
    NUMBER = {6},
     PAGES = {1273--1325},
      ISSN = {0012-9593,1873-2151},
   MRCLASS = {35Q35 (35B30 76D05)},
  MRNUMBER = {3429469},
       DOI = {10.24033/asens.2270},
       URL = {https://doi.org/10.24033/asens.2270},
}

@article {MR2849481,
    AUTHOR = {Guo, Yan and Nguyen, Toan},
     TITLE = {A note on {P}randtl boundary layers},
   JOURNAL = {Comm. Pure Appl. Math.},
  FJOURNAL = {Communications on Pure and Applied Mathematics},
    VOLUME = {64},
      YEAR = {2011},
    NUMBER = {10},
     PAGES = {1416--1438},
      ISSN = {0010-3640,1097-0312},
   MRCLASS = {35Q30 (35B30 35C20 76D05 76D10)},
  MRNUMBER = {2849481},
MRREVIEWER = {Pavel\ I.\ Naumkin},
       DOI = {10.1002/cpa.20377},
       URL = {https://doi.org/10.1002/cpa.20377},
}

@article {MR3765768,
    AUTHOR = {Chen, Dongxiang and Wang, Yuxi and Zhang, Zhifei},
     TITLE = {Well-posedness of the {P}randtl equation with monotonicity in
              {S}obolev spaces},
   JOURNAL = {J. Differential Equations},
  FJOURNAL = {Journal of Differential Equations},
    VOLUME = {264},
      YEAR = {2018},
    NUMBER = {9},
     PAGES = {5870--5893},
      ISSN = {0022-0396,1090-2732},
   MRCLASS = {35Q35 (35B30)},
  MRNUMBER = {3765768},
       DOI = {10.1016/j.jde.2018.01.024},
       URL = {https://doi.org/10.1016/j.jde.2018.01.024},
}

@article{lombardo2003well,
  title={Well-posedness of the boundary layer equations},
  author={Lombardo, M.~C. and Cannone, M. and Sammartino, M.},
  journal={SIAM J. Math. Anal.},
  volume={35},
  number={4},
  pages={987--1004},
  year={2003},
  publisher={SIAM}
}

@article{oleinik1963prandtl,
  title={On the mathematical theory of boundary layer for an unsteady flow of incompressible fluid},
  author={Oleinik, Olga Arsen’evna},
  journal={Journal of Applied Mathematics and Mechanics},
  volume={30},
  number={5},
  pages={951--974},
  year={1966},
  publisher={Elsevier}
}

@article {MW2015,
    AUTHOR = {Masmoudi, N. and Wong, T.~K.},
     TITLE = {Local-in-time existence and uniqueness of solutions to the
              {P}randtl equations by energy methods},
   JOURNAL = {Comm. Pure Appl. Math.},
  FJOURNAL = {Communications on Pure and Applied Mathematics},
    VOLUME = {68},
      YEAR = {2015},
    NUMBER = {10},
     PAGES = {1683--1741},
}

@article{AWXY2015,
    AUTHOR = {Alexandre, R. and Wang, Y.-G. and Xu, C.-J. and Yang, T.},
     TITLE = {Well-posedness of the {P}randtl equation in {S}obolev spaces},
   JOURNAL = {J. Amer. Math. Soc.},
  FJOURNAL = {Journal of the American Mathematical Society},
    VOLUME = {28},
      YEAR = {2015},
    NUMBER = {3},
     PAGES = {745--784},
}

@article {LY2020,
    AUTHOR = {Li, W.-X. and   Yang, T.},
     TITLE = {Well-posedness in {G}evrey function spaces for the {P}randtl equations with non-degenerate critical points},
   JOURNAL = {J. Eur. Math. Soc. },
  FJOURNAL = {JEMS},
    VOLUME = {22},
      YEAR = {2020},
    NUMBER = {3},
     PAGES = {717--775},
}

@article {MR4773381,
    AUTHOR = {Pan, Xinghong and Xu, Chao-Jiang},
     TITLE = {Long-time existence of {G}evrey-2 solutions to the 3{D}
              {P}randtl boundary layer equations},
   JOURNAL = {Commun. Math. Sci.},
  FJOURNAL = {Communications in Mathematical Sciences},
    VOLUME = {22},
      YEAR = {2024},
    NUMBER = {5},
     PAGES = {1203--1250},
      ISSN = {1539-6746,1945-0796},
   MRCLASS = {35Q35 (76D03 76D10)},
  MRNUMBER = {4773381},
MRREVIEWER = {Jingyang\ Shu},
       DOI = {10.4310/cms.2024.v22.n5.a3},
       URL = {https://doi.org/10.4310/cms.2024.v22.n5.a3},
}

@article {MR3634071,
    AUTHOR = {Guo, Yan and Nguyen, Toan T.},
     TITLE = {Prandtl boundary layer expansions of steady {N}avier-{S}tokes
              flows over a moving plate},
   JOURNAL = {Ann. PDE},
  FJOURNAL = {Annals of PDE. Journal Dedicated to the Analysis of Problems
              from Physical Sciences},
    VOLUME = {3},
      YEAR = {2017},
    NUMBER = {1},
     PAGES = {Paper No. 10, 58},
      ISSN = {2524-5317,2199-2576},
   MRCLASS = {35Q30 (35B35 35C20)},
  MRNUMBER = {3634071},
MRREVIEWER = {Gudrun\ Th\"ater},
       DOI = {10.1007/s40818-016-0020-6},
       URL = {https://doi.org/10.1007/s40818-016-0020-6},
}

@article {MR3961300,
    AUTHOR = {Gerard-Varet, David and Maekawa, Yasunori},
     TITLE = {Sobolev stability of {P}randtl expansions for the steady
              {N}avier-{S}tokes equations},
   JOURNAL = {Arch. Ration. Mech. Anal.},
  FJOURNAL = {Archive for Rational Mechanics and Analysis},
    VOLUME = {233},
      YEAR = {2019},
    NUMBER = {3},
     PAGES = {1319--1382},
      ISSN = {0003-9527,1432-0673},
   MRCLASS = {76D05 (35B35 35Q35)},
  MRNUMBER = {3961300},
MRREVIEWER = {Matteo\ Caggio},
       DOI = {10.1007/s00205-019-01380-x},
       URL = {https://doi.org/10.1007/s00205-019-01380-x},
}

@article {MR4592099,
    AUTHOR = {Guo, Yan and Iyer, Sameer},
     TITLE = {Steady {P}randtl layer expansions with external forcing},
   JOURNAL = {Quart. Appl. Math.},
  FJOURNAL = {Quarterly of Applied Mathematics},
    VOLUME = {81},
      YEAR = {2023},
    NUMBER = {2},
     PAGES = {375--411},
      ISSN = {0033-569X,1552-4485},
   MRCLASS = {76D10 (35Q35)},
  MRNUMBER = {4592099},
MRREVIEWER = {Mohammed\ Guedda},
       DOI = {10.1090/qam/1655},
       URL = {https://doi.org/10.1090/qam/1655},
}

@article {MR4715239,
    AUTHOR = {Grenier, Emmanuel and Nguyen, Toan T.},
     TITLE = {On nonlinear instability of {P}randtl's boundary layers: the
              case of {R}ayleigh's stable shear flows},
   JOURNAL = {J. Math. Pures Appl. (9)},
  FJOURNAL = {Journal de Math\'ematiques Pures et Appliqu\'ees. Neuvi\`eme
              S\'erie},
    VOLUME = {184},
      YEAR = {2024},
     PAGES = {71--90},
      ISSN = {0021-7824,1776-3371},
   MRCLASS = {35Q30 (35B25 35Q35 76D10 76E05)},
  MRNUMBER = {4715239},
MRREVIEWER = {Yoshikazu\ Giga},
       DOI = {10.1016/j.matpur.2024.02.001},
       URL = {https://doi.org/10.1016/j.matpur.2024.02.001},
}

@article {MR1761409,
    AUTHOR = {Grenier, Emmanuel},
     TITLE = {On the nonlinear instability of {E}uler and {P}randtl
              equations},
   JOURNAL = {Comm. Pure Appl. Math.},
  FJOURNAL = {Communications on Pure and Applied Mathematics},
    VOLUME = {53},
      YEAR = {2000},
    NUMBER = {9},
     PAGES = {1067--1091},
      ISSN = {0010-3640,1097-0312},
   MRCLASS = {76E30 (35B35 35Q35 76B03 76D10)},
  MRNUMBER = {1761409},
MRREVIEWER = {Marcel\ Oliver},
       DOI = {10.1002/1097-0312(200009)53:9<1067::aid-cpa1>3.0.co;2-q},
       URL =
              {https://doi.org/10.1002/1097-0312(200009)53:9<1067::aid-cpa1>3.0.co;2-q},
}

@article {MR3566199,
    AUTHOR = {Grenier, Emmanuel and Guo, Yan and Nguyen, Toan T.},
     TITLE = {Spectral instability of characteristic boundary layer flows},
   JOURNAL = {Duke Math. J.},
  FJOURNAL = {Duke Mathematical Journal},
    VOLUME = {165},
      YEAR = {2016},
    NUMBER = {16},
     PAGES = {3085--3146},
      ISSN = {0012-7094,1547-7398},
   MRCLASS = {35Q30 (35B25 35B35)},
  MRNUMBER = {3566199},
MRREVIEWER = {Joel\ David\ Avrin},
       DOI = {10.1215/00127094-3645437},
       URL = {https://doi.org/10.1215/00127094-3645437},
}
\nocite{*}

\end{document}